\documentclass[a4paper,UKenglish,cleveref, autoref, thm-restate]{article}



\usepackage{amsmath,mfl,amssymb,stmaryrd,amsthm,amsfonts,MnSymbol}

\newcommand{\wt}{\mathrm{wt}}

\usepackage{times, color}
\usepackage{algorithmic}
\usepackage{graphicx}
\usepackage{textcomp, hyperref}
\usepackage{mathtools}

\DeclareMathAlphabet{\mathbbold}{U}{bbold}{m}{n}

\usepackage[numbers]{natbib}

\usepackage{xifthen}
\usepackage{dsfont,amsmath,amssymb,amsthm,mathtools,stmaryrd,bbm,color}

\DeclarePairedDelimiter{\ser}{\llbracket}{\rrbracket}

\allowdisplaybreaks
\numberwithin{equation}{section}

\newcommand{\defas}{\vcentcolon=}
\newcommand{\ddefas}{\vcentcolon\vcentcolon=}

\newcommand{\edge}{\mathrm{edge}}
\newcommand{\cut}{\mathrm{cut}}
\newcommand{\clique}{\mathrm{clique}}

\newcommand{\free}{\mathrm{Free}}

\newcommand{\true}{{\textup{\texttt{true}}}}
\newcommand{\false}{{\textup{\texttt{false}}}}
\newcommand{\trop}{\mathrm{Trop}}
\newcommand{\arct}{\mathrm{Arct}}


\newcommand{\Sr}{S}

\newcommand{\Nz}{\mathbb{N}}
\newcommand{\Np}{\mathbb{N}_+}

\newcommand{\splus}{+}
\newcommand{\smal}{\cdot}
\newcommand{\bigsplus}{\sum}
\newcommand{\bigsmal}{\prod}
\newcommand{\szero}{\mathbbold{0}}
\newcommand{\sone}{\mathbbold{1}}
\newcommand{\bzero}{0}
\newcommand{\bone}{1}
\newcommand{\fplus}{\oplus}
\newcommand{\fmal}{\otimes}
\newcommand{\bigfplusop}{{\textstyle \bigoplus}} 
\newcommand{\bigfplus}{\bigoplus} 
\newcommand{\bigfmalop}{{\textstyle \bigotimes}} 
\newcommand{\bigfmal}{\bigotimes} 





\newcommand{\rel}[1]{\mathrm{Rel}_{#1}}
\newcommand{\ar}[1]{\mathrm{ar}_#1}

\newcommand{\inter}[1]{\mathcal{I}_{#1}}
\newcommand{\str}[1]{\mathrm{Str}(#1)}

\newcommand{\fo}[1]{\mathrm{FO}(#1)}
\newcommand{\wfo}[1]{\mathrm{wFO}(#1)}

\newcommand{\mso}[1]{\mathrm{SO}(#1)}
\newcommand{\wmso}[1]{\mathrm{wSO}(#1)}

\newcommand{\dom}{\mathrm{dom}}


\newcommand{\B}{\mathbb{B}}

\newcommand{\Q}{\mathbb{Q}}
\newcommand{\R}{\mathbb{R}}

\newcommand{\mcp}{\mathcal{P}}

\newcommand{\mcv}{\mathcal{V}}

\newcommand{\mcx}{\mathcal{X}}

\newcommand{\mfa}{\mathfrak{A}}

\newcommand{\mfg}{\mathfrak{G}}

\newcommand{\vecg}[1]{\bar{#1}} 

\newtheorem{Def}{Definition}
\newtheorem{Thm}{Theorem}
\newtheorem{Rmk}[Thm]{Remark}
\newtheorem{Convention}[Thm]{Convention}
\newtheorem{Exm}[Thm]{Example}
\newtheorem{Cor}[Thm]{Corollary}

\newtheorem{Pro}[Thm]{Proposition}

\bibliographystyle{plainurl}

\title{Logical Characterizations of Weighted Complexity Classes} 


\begin{document}

\author{Guillermo Badia \\ University of Queensland \\ \\ Manfred Droste \\ Leipzig University \\ \\ Carles Noguera \\ University of Siena \\ \\ Erik Paul \\ Leipzig University}

\maketitle

\begin{abstract}
Fagin's seminal result characterizing  $\mathsf{NP}$ in terms of existential second-order logic started  the fruitful field of descriptive complexity theory.
In recent years, there has been much interest in the investigation of quantitative (weighted) models of computations. In this  paper, we start the study of descriptive complexity based on weighted Turing machines over arbitrary semirings. We provide machine-independent characterizations (over ordered structures) of the weighted complexity classes $\mathsf{NP}[\mathcal{S}], \mathsf{FP}[\mathcal{S}]$, $\mathsf{FPLOG}[\mathcal{S}]$, $\mathsf{FPSPACE}[\mathcal{S}]$, and $\mathsf{FPSPACE}_{poly}[\mathcal{S}]$  in terms of definability in suitable weighted logics for an arbitrary semiring $\mathcal{S}$. In particular, we prove weighted versions of Fagin's theorem (even for arbitrary structures, not necessarily ordered, provided that the semiring is idempotent and commutative), the Immerman--Vardi's theorem (originally for $\mathsf{P}$)  and the Abiteboul--Vianu--Vardi's theorem (originally for $\mathsf{PSPACE}$). We also address a recent open problem proposed by Eiter and Kiesel.

 Recently, the above mentioned weighted complexity classes  have been investigated  in  connection to classical counting complexity classes. Furthermore, several classical counting complexity classes
have been characterized in terms of particular weighted logics over the semiring $\mathbb{N}$  of natural numbers. In this work, we cover several of these classes and obtain new results for others such as  $\mathsf{NPMV}$, $\oplus \mathsf{P}$, or  the collection of real-valued languages realized
by polynomial-time real-valued nondeterministic Turing machines.  Furthermore, our results apply to classes based on many other important semirings, such as the max-plus and the min-plus semirings over the natural numbers which correspond to the classical classes $\mathsf{MaxP}[O(\log n)]$ and $\mathsf{MinP}[O(\log n)]$, respectively.

\end{abstract}

\section{Introduction}

Descriptive complexity is a branch of computational complexity, as well as finite model theory, where the difficulty in solving a problem by a Turing machine is characterized not by the amount of resources required (such as time, space and so on) but rather in terms of the complexity of describing the problem in some logical formalism. This field was initially started in 1974 by Ronald Fagin with the celebrated result in~\cite{Fagin} (coined by Neil Immerman as `Fagin's theorem') which stated that the class of $\mathsf{NP}$
languages coincides with the class of languages definable in existential second-order logic. Many further surprising results followed this development, particularly the Immerman--Vardi's theorem characterizing  $\mathsf{P}$ over ordered structures using fixed-point logic~\cite{Immerman1, Vardi}  and the Abiteboul--Vianu--Vardi characterization of $\mathsf{PSPACE}$ in terms of partial fixed-point logic~\cite{Vianu,Vardi}. Today there are several textbooks that cover the fundamentals of the area as a line of research within finite model theory~\cite{E-F,Libkin,G-K-L,Immerman2}. In this paper, we propose to study quantitative versions of some of these key results in this important field in connection with weighted computation. We work over finite structures that come with a linear ordering, which is a standard restriction in descriptive complexity.

Weighted automata are nondeterministic finite automata augmented with values from a semiring as weights on the transitions~\cite{Schutzenberger:1961}. These weights may model, e.g.\ the cost involved when executing a transition, the amount of resources or time needed for this, or the probability or reliability of its successful execution.
The theory of weighted automata and weighted context-free grammars was essential for the solution of such classical automata-theoretic problems as the decidability of the equivalence of unambiguous context-free languages and regular languages~\cite{Salomaa:78} (in fact, the only known proofs of this involve weighted automata), the decidability of two given deterministic multitape automata~\cite{Harju}, and the decidability of two given  deterministic pushdown automata~\cite{Meitus,Senizergues}. This led to quick development of this field, described in the books~\cite{Berstel-Perrin-Reutenauer,Droste:Handbook,Eilenberg,Kuich-Salomaa,Sakarovitch,Salomaa:78}.
Furthermore, weighted automata and weighted context-free grammars have been used as basic concepts in natural language processing and speech recognition, as well as in algorithms for digital image compression~\cite{Albert-Kari}.
Weighted logic~\cite{DrosteGastin:07}, with weights in an arbitrary semiring, was developed originally to obtain a weighted version of the B\"uchi--Elgot--Trakhtenbrot theorem, showing that a certain weighted monadic second-order logic has the same expressive power on words as weighted automata. Consequently, this weighted logic over suitable semirings like fields has similar decidability properties
on words as unweighted monadic second-order logic. It is worth remarking  that the classical B\"uchi--Elgot--Trakhtenbrot theorem is usually regarded as part of the ``prehistory'' of descriptive complexity~\cite[p.~145]{G-K-L}.

Weighted Turing machines extend the concept of weighted automata as natural quantitative counterparts of classical Turing machines. They were first introduced under the name `algebraic Turing machines' in~\cite{Damm, Damm1} and they have attracted further attention in~\cite{Kos}. Instances of this concept include the so called  `fuzzy Turing machines'~\cite{Wie, Be}. Recently, 
the articles~\cite{Eiter, Eiter1} have introduced a related notion of `semiring Turing machine' and explicitly asked for the development of descriptive complexity in such framework as an open problem, focusing specifically on Fagin's theorem in connection to weighted logic~\cite[p.\ 255]{Eiter}. We will address this problem at the end of Section~\ref{complexitysec}.

\medskip

\noindent
{\bf Our contribution.} The present paper develops a theory of weighted descriptive complexity and establishes quantitative versions of some celebrated classical theorems. The novel contributions of this work can be summarized in the following characterizations (for an arbitrary semiring $\mathcal{S}$):
\begin{itemize}
\item The weighted complexity class $\mathsf{NP}[\mathcal{S}]$ coincides with the queries definable by weighted existential second-order logic on ordered structures, with weights in $\mathcal{S}$, respectively for all structures if $\mathcal{S}$ is idempotent and commutative (Theorem~\ref{thm:wfagin}).
\item The weighted complexity class $\mathsf{FP}[\mathcal{S}]$ coincides with the queries definable by weighted inflationary fixed-point logic, with weights in $\mathcal{S}$ (Theorem~\ref{I-V}).
\item The weighted complexity class $\mathsf{FPSPACE}[\mathcal{S}]$ coincides with the queries definable by weighted partial fixed-point logic with the addition of second-order multiplicative and additive quantifiers, with weights in $\mathcal{S}$ (Theorem~\ref{space}).
\item The weighted complexity class $\mathsf{FPSPACE}_{poly}[\mathcal{S}]$ coincides with the queries definable by weighted partial fixed-point logic, with weights in $\mathcal{S}$ (Theorem~\ref{spacepoly}).
\item The weighted complexity class $\mathsf{FPLOG}[\mathcal{S}]$ coincides with the queries definable by weighted deterministic transitive closure logic (Theorem~\ref{logi}).
\end{itemize}
\medskip

\noindent
{\bf Related work.} We should remark that  the article~\cite{Arenas} (following up on the work of~\cite{Saluja:95}) proposes the idea of using certain weighted logics (with weights in the semiring $\mathbb{N}$  of natural numbers or, in a couple of cases, $\mathbb{Z}$) to characterize well-known counting complexity classes. The authors obtain several interesting results that are also covered by our more encompassing work here (that is, they provide logical characterizations of  $\#\mathsf{P}$, $ \mathsf{FP}$, $\mathsf{FPSPACE}$, $\mathsf{FPSPACE}(poly)$, $\mathsf{GapP}$, and $\mathsf{MaxP}$).  There is, however, some orthogonality as they cover some classical complexity classes that we do not and, similarly, we cover some that they do not, as we do not restrict our semiring to being $\mathbb{N}$ or $\mathbb{Z}$. Moreover, the investigation in~\cite{Arenas}, by contrast to ours, concentrates on the study of classical counting classes for ordered structures, while we consider both ordered and arbitrary structures (provided, in the latter case, that the semiring is idempotent and commutative; examples include e.g.\ the max-plus- and min-plus-semirings). In the present article, the central aim is rather starting the study of weighted complexity classes via logic, and the corollaries characterizing classical complexity classes are obtained as interesting byproducts of the work. In this way, we are also meeting the challenge posed in~\cite[p.3]{Kos} of developing ``quantitative descriptive complexity theory based on weighted logics [$\dots$]
over some fairly general class of semirings''. Further work on the  model theory of weighted logics includes a Feferman--Vaught result~\cite{DrostePaul}, but the area remains largely unexplored despite being one of the open problems suggested in~\cite{DrosteGastin:07}.

\section{Weighted Turing machines}

In order to introduce the notion of a weighted Turing machine, first we need to define the kind of algebraic structures that will provide the weights, that is, semirings.

\begin{Def}[Semirings] A \emph{semiring} is a tuple $\mathcal{S}=\tuple{S,\splus,\smal,\szero,\sone}$, with operations addition $\splus$ and multiplication $\smal$ and constants $\szero$ and\/ $\sone$ such that 
\begin{itemize}
\item $\tuple{S,\splus,\szero}$ is a commutative monoid and $\tuple{S,\smal,\sone}$ is a monoid, 
\item multiplication distributes over addition, and 
\item $s \smal \szero = \szero \smal s = \szero$ for every $s \in S$. 
\end{itemize}
We say that $\mathcal{S}$ is \emph{commutative} if the monoid $\tuple{S,\smal,\sone}$ is commutative, and we say that $\mathcal{S}$ is \emph{idempotent} if the monoid $\tuple{S,\splus,\szero}$ is idempotent (that is, $s \splus s =s$ for each $s \in S$). \end{Def}

Some examples of semirings, including those that we will use in this paper, are the following:
\begin{itemize}
\item the \emph{Boolean semiring} $\B = \tuple{\{\bzero, \bone\}, \min, \max, \bzero, \bone}$,
\item any bounded distributive lattice $\tuple{L, \lor, \land, 0, 1}$,
\item the semiring of natural numbers $\tuple{\Nz, +, \cdot, 0, 1}$,
\item the semiring of extended natural numbers $\tuple{\Nz \cup \{ +\infty \}, +, \cdot, 0, 1}$ where $0 \cdot (+\infty) = 0$,
\item the ring of integers, $\tuple{\mathbb{Z},+,\cdot,0,1}$,
\item the ring of integers modulo $n$, $\tuple{\mathbb{Z}_n,+_n,\cdot_n,\overline{0},\overline{1}}$, for each $n\in \mathbb{N}$,
\item the field of rational numbers $\tuple{\Q, +, \cdot, 0, 1}$,
\item the \emph{max-plus} or \emph{arctic semiring} $\arct = \tuple{\R_+ \cup \{-\infty\}, \max, +, -\infty, 0}$, where $\R_+$ denotes the set of non-negative real numbers,
\item the restriction of the arctic semiring to the natural numbers $\mathbb{N}_{\max} = \tuple{\mathbb{N} \cup \{-\infty\}, \max, +, -\infty, 0}$,
\item the \emph{min-plus} or \emph{tropical semiring} $\trop = \tuple{\R_+ \cup \{+\infty\}, \min, +, +\infty, 0}$,
\item the restriction of the tropical semiring to the natural numbers $\mathbb{N}_{\min} = \tuple{\mathbb{N} \cup \{+\infty\}, \min, +, +\infty, 0}$,
\item the semiring $\mathcal{F}_* =\tuple{[0,1],\max,*,0,1}$ given by a t-norm $*$~\cite{Wie},
\item the semiring of finite languages $2^{\Sigma^*}_\mathrm{fin} =\tuple{2^{\Sigma^*}_\mathrm{fin},\cup,\cdot,\emptyset,\{\varepsilon\}}$, for an alphabet $\Sigma$,
\item the semiring $\mathcal{S}_{\max} = \tuple{\{0,1\}^* \cup \{-\infty\}, \max, \cdot, -\infty, \varepsilon}$ of binary words in which $\max$ is computed according to the {\em radix order} (for $x,y \in \{0,1\}^*$, $x \preceq y$ iff $|x| <|y|$ or $|x|=|y|$ and $x$ is smaller than or equal to $y$ in the lexicographic order) and $\max(x,- \infty) = \max(- \infty, x) = x$ for each $x$, $\cdot$ is the concatenation operation, and $x \cdot (- \infty) = (- \infty) \cdot x = - \infty$ for each $x$,
\item the semiring $\mathcal{S}_{\min} = \tuple{\{0,1\}^* \cup \{+\infty\}, \min, \cdot, +\infty, \varepsilon}$ analogous to the previous one. 
\end{itemize}

A notion from universal algebra (cf.~\cite{Bergman}) that we will make use of in defining some of the complexity classes below (e.g. $\mathsf{FP}[\mathcal{S}], \mathsf{FPSPACE}[\mathcal{S}]$ and $\mathsf{FPLOG}[\mathcal{S}]$) is the following:

\begin{Def}[Term algebra] Consider a semiring $\mathcal{S}=\tuple{S,\splus,\smal,\szero,\sone}$ and a subset $X\subseteq S$. Then the \emph{set of terms} $T(X)$  is the collection of  all well-formed strings that can be constructed using the symbols in $X$ and $\splus,\smal,\szero,\sone$ (in particular,  $\szero,\sone \in T(X)$). The \emph{term algebra} $\mathcal{T}(X)$ is the structure with universe $T(X)$ and operations $\splus',\smal'$ defined in the obvious way using the operations $\splus,\smal$
 from the semiring $\mathcal{S}$.
\end{Def}

\begin{Def}[Weighted Turing Machines]
Let $\mathcal{S}$ be a semiring and\/ $\Sigma$ an alphabet. A {\em weighted (or algebraic) Turing machine} over $\mathcal{S}$ and input alphabet $\Sigma$ is a septuple $\mathcal{M}= \tuple{Q, \Gamma,\Delta, \nu, q_0, F, \Box}$, where
\begin{itemize}
\item $Q$ is a nonempty finite set whose elements are called {\em states},
\item $\Gamma \supseteq \Sigma$ is an alphabet ({\em working alphabet}),
\item $\Delta \subseteq (Q \setminus F) \times \Gamma \times Q \times \Gamma  \times \{-1, 0, 1\}$ and its elements are called {\em transitions},
\item $\nu \colon \Delta \longrightarrow S$ is called a {\em transition weighting function}, $q_0 \in Q$ is called the {\em initial state}, $F \subseteq Q$ and its elements are called {\em accepting states}, and $\Box \in \Gamma \setminus \Sigma$ is the blank symbol.
\end{itemize}
We call $\mathcal{M}$ a \emph{Turing machine} if $\mathcal{S}$ is the Boolean semiring $\mathbb{B}$.
We call $\mathcal{M}$ \emph{deterministic} if for every pair $(p, a) \in Q \times \Gamma$,
there is at most one transition $(p,a,q,b,d) \in \Delta$.
\end{Def}

A {\em configuration} of $\mathcal{M}$ is a unique description of the machine’s state, contents of the working tape, and the position of the machine’s head. If $e = \tuple{p,c,q,d,t} \in \Delta$ is a transition and $C_1, C_2$ are configurations of $\mathcal{M}$, then we write $C_1 \longrightarrow_e C_2$ if $C_1$ is a configuration with state p and the head reading $c$, while $C_2$ is obtained from $C_1$ by changing state to $q$, rewriting the originally read symbol $c$ to $d$, and moving the head as prescribed by $t$. We write $C_1 \longrightarrow C_2$ if $C_1 \longrightarrow_e C_2$ for some $e\in \Delta$.

A {\em computation} of $\mathcal{M}$ is a word $\gamma = C_1e_1C_2e_2C_3\ldots C_ne_nC_{n+1}$ such that $C_1, \ldots, C_{n+1}$ are configurations of $\mathcal{M}$, $e_1, \ldots, e_n \in \Delta$, $C_k \longrightarrow_{e_k} C_{k+1}$ for each $k \in \{1, \ldots, n\}$, and $C_1$ is a configuration with state $q_0$ and the head at the leftmost
non-blank cell (if there is some). The {\em weight} of $\gamma$ is defined as $\nu(\gamma) := \nu(e_1)\nu(e_2)\ldots \nu(e_n)$. $\gamma$ is called an {\em accepting} computation if $C_{n+1}$ has an accepting state. We say that $\gamma$ is a computation on $w$ in $\Sigma^*$, and write $\Sigma(\gamma) = w$ if $C_1$ is a configuration with $w$ on the working tape. We denote the set of all computations of $\mathcal{M}$ by $C(\mathcal{M})$ and the set of all accepting computations by $A(\mathcal{M})$.

\begin{Convention}
From  now on we will assume that every Turing machine $\mathcal{M}$ is {\em finitely terminating}, that is,  the set $C_w(\mathcal{M}) = \{\gamma \in C(\mathcal{M}) \mid \Sigma(\gamma) = w\}$ is finite for each $w \in \Sigma^*$. In particular, the set $A_w(\mathcal{M}) = \{\gamma \in A(\mathcal{M}) \mid \Sigma(\gamma) = w\}$ is finite.
\end{Convention}

Thanks to the convention, we can introduce the following notion:

\begin{Def}[Behavior of a weighted Turing machine] Let $\mathcal{M}$  be a weighted Turing machine.
The {\em behavior} of $\mathcal{M}$ as the mapping $\|\mathcal{M}\| \colon \Sigma^* \longrightarrow S$ defined as
$$
 \|\mathcal{M}\|(w) := \sum_{\gamma \in A_w(\mathcal{M})} \nu(\gamma).
$$
We say that a series $\sigma\colon \Sigma^*  \longrightarrow S$ is \emph{recognized} by a weighted Turing machine $\mathcal{M}$ if\/ $\|\mathcal{M}\| = \sigma$.
\end{Def}

The definition of weighted Turing machine we have used here is exactly the same as that of algebraic Turing machines~\cite[Def.\ 5.1]{Damm} (see also~\cite{Kos}). Similarly, the notion of the behavior of the machine coincides. The semiring Turing machines of~\cite{Eiter1, Eiter}, by contrast,  differ in that they impose some conditions on  the allowed transitions~\cite[cf.\ Def.\ 12]{Eiter} (thus everything that can be done by a semiring Turing machine can be done by a weighted one, but the converse is not clear). Given distributivity of multiplication over addition, the notion of a semiring Turing machine function there~\cite[Def.\ 13]{Eiter} coincides with that of the behavior we use here.
All these definitions generalize the corresponding notions for weighted automata.

\section{Some weighted complexity classes}
Let $\mathcal{M}= \tuple{Q, \Gamma,\Delta, \nu, q_0, F, \Box}$ be a weighted Turing machine over $\mathcal{S}$ and $\Sigma$. For $w \in \Sigma^*$, we denote by $\mathsf{TIME}(\mathcal{M},w)$ the maximal length of a computation of $\mathcal{M}$ on $w$, and define, for $n \in \mathbb{N}$, $\mathsf{TIME}(\mathcal{M},n) := \max\{\mathsf{TIME}(\mathcal{M},w) : w \in \Sigma^*, |w|\leq n\}$.

For a function $f \colon \mathbb{N} \longrightarrow \mathbb{N}$, we denote by $\mathsf{SERIES}[S,\Sigma](f)$ the set of all series $\sigma$ such that $\sigma = \|\mathcal{M}\|$ for some weighted Turing machine $\mathcal{M}$ over $\mathcal{S}$ and $\Sigma$ with $\mathsf{TIME}(\mathcal{M},n) = O(f(n))$. Now we can define the complexity classes:
\begin{itemize}
\item[]$\mathsf{SERIES}[\mathcal{S}](f(n)):= \bigcup \{\mathsf{SERIES}[S,\Sigma](f(n)) : \Sigma$ is an alphabet$\}.$
\end{itemize}

\begin{Def}
Let $\mathcal{S}$ be a semiring. We define the following weighted complexity class  $$\mathsf{NP}[\mathcal{S}]:=\bigcup \{\mathsf{SERIES}[\mathcal{S}](n^k) : k \in \mathbb{N}\}.$$
\end{Def}

$\mathsf{NP}[\mathcal{S}]$ (cf.~\cite[Def.~4.1]{Kos}) coincides with the definition of the class $\mathcal{S}$-$\#\text{P}$ in~\cite[Def.~5.2]{Damm}. Furthermore, it is contained as a subclass in the similarly defined class $\text{NP}[\mathcal{R}]$ from~\cite[Def.~14]{Eiter} when $\mathcal{R}$ is a commutative semiring. Below (Proposition~\ref{countx}), we will actually show that this containment is proper, in the sense that  $\text{NP}[\mathcal{R}]$  will contain some series that are not in $\mathsf{NP}[\mathcal{S}]$.

\begin{Exm}\label{exm1}
Following~\cite[Prop.~5.3]{Damm} and~\cite[Examples~4.2--4.6]{Kos}, we can list some prominent instances of\/ $\mathsf{NP}[\mathcal{S}]$:
\begin{itemize}
\item the usual complexity class $\mathsf{NP}$, obtained when $\mathcal{S}=\mathbb{B}$ is the two-element Boolean semiring and each transition is weighted by $1$ (this is the standard way of representing a classical machine model in the weighted context),
\item the counting class $\#\mathsf{P}$~\cite{Valiant}, obtained when $\mathcal{S}= \tuple{\Nz,+,\cdot,0,1}$ is the semiring of natural numbers and each transition is weighted by $1$,
\item the complexity class $\bigoplus\mathsf{P}$~\cite{Papadimitriou-Zachos}, obtained when $\mathcal{S}= \tuple{\mathbb{Z}_2,+_2,\cdot_2,\overline{0},\overline{1}}$ is the finite field of two elements and each transition is weighted by $1$,
\item the class $\mathsf{GapP}$, closure of\/ $\#\mathsf{P}$ under subtraction~\cite{Fenner-Fortnow-Kurtz,Gupta}, obtained when $\mathcal{S}=\tuple{\mathbb{Z},+,\cdot,\overline{0},\overline{1}}$ is the ring of integers and transitions are weighted by $1$ and $-1$,
\item  the class $\mathsf{MOD}_q-\mathsf{P}$ (for $q\geq 2$)~\cite{Cai}, defined similarly to $\#\mathsf{P}$ but with respect to counting modulo $q$, obtained when $\mathcal{S}=\tuple{\mathbb{Z}_q,+_q,\cdot_q,\overline{0},\overline{1}}$ and transitions are weighted by $1$.
\end{itemize}
\end{Exm}

\begin{Exm}\label{exm2}
 Some further instances of\/ $\mathsf{NP}[\mathcal{S}]$, this time following~\cite[Examples 4.7--4.11]{Kos}, are:
\begin{itemize}

\item  the class $\mathsf{NP}[F_*]$ of all fuzzy languages realizable by fuzzy Turing machines~\cite{Wie} with t-norm $*$
in polynomial time, obtained when the semiring is $\mathcal{F}_* =\tuple{[0,1],\max,*,0,1}$ and the weights correspond to degrees of membership in the fuzzy language,

\item the class $\mathsf{NPMV}$
of all multivalued functions realized by nondeterministic polynomial-time transducer machines~\cite{Book-Long-Selman}, obtained when, given alphabets $\Sigma_1$ and $\Sigma_2$, the semiring is $\tuple{2^{\Sigma_2^*}_\mathrm{fin},\cup,\cdot,\emptyset,\{\varepsilon\}}$ and weighted Turing machines have input alphabet $\Sigma_1$,

\item the class of all multiset-valued functions computed by nondeterministic polynomial-time transducer machines with counting, obtained as in the previous example but using the free semiring $\tuple{\mathbb{N}\tuple{\Sigma^*_2},+,\cdot,0,1}$ instead,

\item the class $\mathsf{MaxP}\subseteq \mathsf{OptP}$ of problems in which the objective is to compute the value of a solution to an optimization problem in $\mathsf{NPO}$~\cite{Krentel}, obtained when the semiring is $\mathcal{S}_{\max}$, and the class $\mathsf{MinP}\subseteq \mathsf{OptP}$,  obtained when the semiring is $\mathcal{S}_{\min}$,

\item the class $\mathsf{MaxP}[[O(\log n)]]\subseteq \mathsf{OptP}[O(\log n)]$ of problems in which the objective is to compute the value of a solution to an optimization problem in $\mathsf{NPO}\,\mathsf{PB}$~\cite{Krentel}, obtained when the semiring is $\mathbb{N}_{\max}$, and\/ $\mathsf{MinP}[[O(\log n)]]\subseteq \mathsf{OptP}[O(\log n)]$, , obtained when the semiring is $\mathbb{N}_{\min}$.

\end{itemize}
\end{Exm}

\begin{Def}
We define the complexity class $\mathsf{FP}[\mathcal{S}]$ as
$$\mathsf{FP}[\mathcal{S}] := \bigcup_{\substack{\{0,1\} \subseteq G\subseteq_{\mathrm{fin}} S \\ \Sigma \ \text{is a finite alphabet}}} \mathsf{FP}[G, \Sigma] $$
where $\mathsf{FP}[G,\Sigma]$ is the set of all series $\sigma \colon \Sigma^* \longrightarrow \langle G \rangle$ (where $\langle G \rangle$ is the subsemiring of $\mathcal{S}$ generated by $G$) such that there is a constant $k \in \Nz$ and a deterministic polynomial-time Turing machine which outputs for every word $w\in \Sigma^*$ a  word of the form $\sum^{m_1}_{i_1=1} \prod^{n_1}_{j_1=1} \dotsb \sum^{m_k}_{i_k=1} \prod^{n_k}_{j_k=1} s_{i_1j_1 \dotsb i_k j_k}$ in the algebra of terms $T(G)$ in $S$ with value $\sigma(w)$ in $\mathcal{S}$.
Here, $T(G)$ is the smallest set of such of finite words which satisfies (1) $G \subseteq T(G)$ and (2) $(t_1 + t_2) \in T(G)$ and $(t_1 \cdot t_2) \in T(G)$ for every two terms $t_1, t_2 \in T(G)$; we abuse notation and omit parentheses whenever associativity permits.
\end{Def}

We note that this definition of $\mathsf{FP}[\mathcal{S}]$ differs from the one given in~\cite{Kos} in that we impose a bound on the number of alternations of the semiring operations.

\begin{Exm} If $\mathcal{S}=\mathbb{B}$ is the two-element Boolean semiring, then $\mathsf{FP}[\mathbb{B}]$ is just $\mathsf{P}$~\cite[Example 5.4]{Kos}. Observe that the terms output by  the machine in that example are already trivially of the form $\sum^n_{i=1} \prod^{m}_{j=1} s_{ij}$.
\end{Exm}

$\mathsf{FP}$ is to $\# \mathsf{P}$ what  $\mathsf{P}$ is to $\mathsf{NP}$. Thus, considering $\mathsf{NP}[\mathcal{S}]$ as a generalization of $\# \mathsf{P}$ (as it is done in~\cite{Damm}), the relationship between $\mathsf{FP}[\mathcal{S}]$ and $\mathsf{NP}[\mathcal{S}]$ is similar to that between $\mathsf{P}$ and $\mathsf{NP}$.

\begin{Exm}  If $\mathcal{S}=\mathbb{N}$ is the natural numbers semiring, then $\mathsf{FP}[\mathbb{N}]$ is just $\mathsf{FP}$~\cite[Example 5.5]{Kos}. As before, observe that the terms output by  the machine in that example are already of the form $\sum^n_{i=1} \prod^{m}_{j=1} s_{ij}$.

\end{Exm}

\begin{Def}
The  class   $\mathsf{FPLOG}[\mathcal{S}]$ is defined as $\mathsf{FP}[\mathcal{S}]$  except that we  allow the machine to have logarithmic space on the length of the input rather than polynomial time.
\end{Def}

\begin{Exm} If $\mathcal{S}=\mathbb{B}$, then $\mathsf{FPLOG}[\mathbb{B}]$ is just $\mathsf{DLOGSPACE}$. 
\end{Exm}

\begin{Exm}  If $\mathcal{S}=\mathbb{N}$, then $\mathsf{FPLOG}[\mathbb{N}]$ is just $\mathsf{FPLOG}$, which is defined as $\mathsf{FP}$ but allowing the machine to use logarithmic space on the size of the input (cf.~\cite{Gla}). 

\end{Exm}

\begin{Def}
The  class   $\mathsf{FPSPACE}[\mathcal{S}]$ is defined as $\mathsf{FP}[\mathcal{S}]$  except that we require allow the machine to have polynomial space on the length of the input rather than polynomial time.
\end{Def}

\begin{Exm} If $\mathcal{S}=\mathbb{B}$, then $\mathsf{FPSPACE}[\mathbb{B}]$ is just $\mathsf{PSPACE}$. 
\end{Exm}

\begin{Exm}  If $\mathcal{S}=\mathbb{N}$, then $\mathsf{FPSPACE}[\mathbb{N}]$ is just $\mathsf{FPSPACE}$ (\cite{Ladner}). 

\end{Exm}

\begin{Def}
The  class   $\mathsf{FPSPACE}_{poly}[\mathcal{S}]$ is defined as $\mathsf{FPSPACE}[\mathcal{S}]$  except that we require the word $\sum^n_{i=1} \prod^{m}_{j=1} s_{ij}$ to have length bounded by a polynomial.
Here, every semiring element is considered to have length $1$.
\end{Def}

\begin{Exm} If $\mathcal{S}=\mathbb{B}$, then $\mathsf{FPSPACE}_{poly}[\mathbb{B}]$ is just $\mathsf{PSPACE}$. 
\end{Exm}

\begin{Exm} If $\mathcal{S}=\mathbb{N}$, then $\mathsf{FPSPACE}_{poly}[\mathbb{N}]$ is just $\mathsf{FPSPACE}_{poly}$ (\cite{Ladner}). 
\end{Exm}

\section{Weighted logics}



A \emph{signature} (or \emph{vocabulary}) $\tau$ is a pair $\tuple{\rel{\tau}, \ar{\tau}}$ where $\rel{\tau}$ is a set of relation symbols and $\ar{\tau} \colon \rel{\tau} \longrightarrow \Np$ is the arity function. A \emph{$\tau$-structure}  $\mfa$ is a pair $\tuple{A, \inter{\mfa}}$ where $A$ is a set, called the \emph{universe of\/ $\mfa$}, and $\inter{\mfa}$ is an \emph{interpretation}, which maps every symbol $R \in \rel{\tau}$ to a set $R^\mfa \subseteq A^{\ar{\tau}(R)}$. We assume that each structure is \emph{finite}, that is, its universe is a finite set. A structure is called \emph{ordered} if it is given for a vocabulary $\tau \cup\{<\}$ where  $<$ in interpreted as a linear ordering with endpoints. By $\str{\tau}_<$ we denote the class of all finite ordered $\tau$-structures.

We provide a countable set $\mcv$ of first and second-order variables, where lower case letters like $x$ and $y$ denote first-order variables and capital letters like $X$ and $Y$ denote second-order variables. Each second-order variable $X$ comes with an associated arity, denoted by $\mathrm{ar} (X)$. We define first-order formulas $\beta$ over a signature $\tau$ and weighted first-order formulas $\varphi$ over $\tau$ and a semiring $\mathcal{S}$, respectively, by the grammars
\begin{align*}
\beta &\ddefas \false \mid R(x_1, \ldots, x_n) \mid \lnot \beta \mid \beta \lor \beta \mid \exists x.\beta\\
\varphi &\ddefas \beta \mid s \mid \varphi \fplus \varphi \mid \varphi \fmal \varphi \mid \bigfplusop x.\varphi \mid \bigfmalop x. \varphi,
\end{align*}
where $R \in \rel{\tau}$, $n = \ar{\tau}(R)$, $x, x_1, \ldots, x_n \in \mcv$ are first-order variables, and $s \in \Sr$. Likewise, we define second-order formulas $\beta$ over $\tau$ and weighted  second-order formulas $\varphi$ over $\tau$ and $\mathcal{S}$ through
\begin{align*}
\beta &\ddefas \false \mid R(x_1, \ldots, x_n) \mid X(x_1, \ldots, x_n) \mid \lnot \beta \mid \beta \lor \beta \mid \exists x.\beta \mid \exists X.\beta\\
\varphi &\ddefas \beta \mid s \mid \varphi \fplus \varphi \mid \varphi \fmal \varphi \mid \bigfplusop x.\varphi \mid \bigfmalop x. \varphi \mid \bigfplusop X. \varphi \mid \bigfmalop X. \varphi,
\end{align*}

with $R \in \rel{\tau}$, $n = \ar{\tau}(R)=\mathrm{ar} (X)$, $x, x_1, \ldots, x_n \in \mcv$ first-order variables, $X \in \mcv$ a second-order variable, and $s \in \Sr$. We also allow the usual abbreviations $\land$, $\forall$, $\to$, $\leftrightarrow$, and $\true$. By $\fo{\tau}$ and $\wfo{\tau,\Sr}$ we denote the sets of all first-order formulas over $\tau$ and all weighted first-order formulas over $\tau$ and $\mathcal{S}$, respectively, and by $\mso{\tau}$ and $\wmso{\tau,\Sr}$ we denote the sets of all second-order formulas over $\tau$ and all weighted second-order formulas over $\tau$ and $\mathcal{S}$, respectively.

The notion of \emph{free variables} is defined as usual, i.e., the operators $\exists, \forall, \bigfplusop$, and $\bigfmalop$ bind variables. We let $\free(\varphi)$ be the set of all free variables of $\varphi$. A formula $\varphi$ with $\free(\varphi) = \emptyset$ is called a \emph{sentence}. For a tuple $\vecg{\varphi} = \tuple{\varphi_1, \ldots, \varphi_n} \in \wmso{\tau, \Sr}^n$, we define $\free(\vecg{\varphi}) = \bigcup_{i=1}^n \free(\varphi_i)$.

We define the semantics of $\mathrm{SO}$ and $\mathrm{wSO}$ as follows.
Let $\tau$ be a signature, $\mfa = \tuple{A, \inter{\mfa}}$ a $\tau$-structure, and $\mcv$ a set of first and second-order variables. 
A $(\mcv, \mfa)$-assignment $\rho$ is a function $\rho \colon \mcv \longrightarrow A \cup \mcp(A)$ such that, whenever $x \in \mcv$ is a first-order variable and $\rho(x)$ is defined, we have $\rho(x) \in A$, and whenever $X \in \mcv$ is a second-order variable and $\rho(X)$ is defined, we have $\rho(X) \subseteq A^{\mathrm{ar} (X)}$.
 For a first-order variable, this restriction may cause the variable to become undefined.
Let $\dom(\rho)$ be the domain of $\rho$. For a first-order variable $x \in \mcv$ and an element $a \in A$, the \emph{update} $\rho[x \to a]$ is defined through $\dom(\rho[x \to a]) = \dom(\rho) \cup \{x\}$, $\rho[x \to a](\mcx) = \rho(\mcx)$ for all $\mcx \in \mcv\setminus\{x\}$, and $\rho[x \to a](x) = a$. For a second-order variable $X \in \mcv$ and a set $I \subseteq A$, the update $\rho[X \to I]$ is defined in a similar fashion. By $\mfa_\mcv$ we denote the set of all $(\mcv, \mfa)$-assignments.

For $\rho \in \mfa_\mcv$ and a formula $\beta \in \mso{\tau}$ the relation ``$\tuple{\mfa, \rho}$ satisfies $\beta$'', denoted by $\tuple{\mfa, \rho} \models \beta$, is defined as
\begin{align*}
\begin{aligned}
& \tuple{\mfa, \rho} \models \false && && \text{never holds}\\
& \tuple{\mfa, \rho} \models R(x_1, \ldots, x_n) && \Longleftrightarrow && x_1, \ldots, x_n \in \dom(\rho) \text{ and } (\rho(x_1), \ldots, \rho(x_n)) \in R^\mfa\\
& \tuple{\mfa, \rho} \models X(x_1,..., x_n) && \Longleftrightarrow && x_1,..., x_n, X \in \dom(\rho) \text{ and } \langle\rho(x_1), \dots,  \rho(x_n)\rangle \in \rho(X) \\
& \tuple{\mfa, \rho} \models \lnot \beta && \Longleftrightarrow && \tuple{\mfa, \rho} \models \beta \text{ does not hold}\\
& \tuple{\mfa, \rho} \models \beta_1 \lor \beta_2 && \Longleftrightarrow && \tuple{\mfa, \rho} \models \beta_1 \text{ or } \tuple{\mfa, \rho} \models \beta_2\\
& \tuple{\mfa, \rho} \models \exists x. \beta && \Longleftrightarrow && \tuple{\mfa, \rho[x \to a]} \models \beta \text{ for some } a \in A\\
& \tuple{\mfa, \rho} \models \exists X. \beta && \Longleftrightarrow && \tuple{\mfa, \rho[X \to I]} \models \beta \text{ for some } I \subseteq A.
\end{aligned}
\end{align*}

Let $\varphi \in \wmso{\tau,\Sr}$ and  $\mfa \in \str{\tau}_<$,  $a_1, \dots, a_k$ be an enumeration of the elements of $\mfa$ according to the ordering that serves as the interpretation of $<$, and for every integer $n$, let $I_1^n, \dots, I_{l_n}^n$  be an enumeration of the subsets of $A^n$ according to the lexicographic ordering induced by the interpretation of $<$. The \emph{(weighted) semantics} of $\varphi$ is a mapping $\ser{\varphi}(\mfa,\cdot) \colon \mfa_\mcv \longrightarrow \Sr$ inductively defined as
\begin{align*}
\begin{aligned}
&\ser{\beta}(\mfa, \rho) &&=&& \begin{cases} \sone & \text{if } \tuple{\mfa, \rho} \models \beta \\ \szero & \text{otherwise} \end{cases}\\
&\ser{s}(\mfa, \rho) &&=&& s\\
&\ser{\varphi_1 \fplus \varphi_2}(\mfa, \rho) &&=&& \ser{\varphi_1}(\mfa, \rho) \splus \ser{\varphi_2}(\mfa, \rho)\\
&\ser{\varphi_1 \fmal \varphi_2}(\mfa, \rho) &&=&& \ser{\varphi_1}(\mfa, \rho) \smal \ser{\varphi_2}(\mfa, \rho)\\
&\ser{\bigfplusop x. \varphi}(\mfa, \rho) &&=&& \bigsplus_{a \in A} \ser{\varphi}(\mfa, \rho[x \to a])\\
&\ser{\bigfmalop x. \varphi}(\mfa, \rho) &&=&&   \prod_{1\leq i \leq k} \ser{\varphi}(\mfa, \rho[x \to a_i]) \\
&\ser{\bigfplusop X. \varphi}(\mfa, \rho) &&=&& \bigsplus_{I \subseteq A^{\mathrm{ar} (X)}} \ser{\varphi}(\mfa, \rho[X \to I])\\
&\ser{\bigfmalop X. \varphi}(\mfa, \rho) &&=&&  \prod_{1 \leq i \leq l_{\mathrm{ar} (X)}} \ser{\varphi}(\mfa, \rho[X \to I_i^{\mathrm{ar} (X)}]).
\end{aligned}
\end{align*}
 Note that if the semiring is commutative, in the clauses of universal quantifiers,
the semantics is defined by using any order for the factors in the products.

We will usually identify a pair $\tuple{\mfa, \emptyset}$ (where $\emptyset$ is the empty mapping) with $\mfa$.
We will also refer to the following expansions of $\mathrm{FO}$:
\begin{itemize}
\item Transitive closure logic ($\mathrm{TC}$) is obtained by adding the following rule for building formulas: if $\varphi(\overline{x},\overline{y})$ is a formula with variables $\overline{x} = x_1, \ldots, x_k$ and $\overline{y} = y_1, \ldots, y_k$, and $\overline{u},\overline{v}$ are $k$-tuples of terms, then $[\mathbf{tc}_{\overline{x},\overline{y}}\,\varphi(\overline{x},\overline{y})](\overline{u},\overline{v})$ is also a formula, and its semantics is given as\\
$\mfa \models [\mathbf{tc}_{\overline{x},\overline{y}}\,\varphi(\overline{x},\overline{y})](\overline{a},\overline{b}) \,\, \Longleftrightarrow \,\,$  there exist an $n\geq 1$ and  $\overline{c_0}, \ldots, \overline{c_n} \in A^k$ such that $\overline{c_0} = \overline{a}$, $\overline{c_n} = \overline{b}$, and $\mfa \models \varphi(\overline{c_i},\overline{c_{i+1}})$ for each $i \in \{0,\ldots, n-1\}$.
\item Deterministic transitive closure logic ($\mathrm{DTC}$) is obtained by adding the following rule for building formulas: if $\varphi(\overline{x},\overline{y})$ is a formula with variables $\overline{x} = x_1, \ldots, x_k$ and $\overline{y} = y_1, \ldots, y_k$, and $\overline{u},\overline{v}$ are $k$-tuples of terms, then $[\mathbf{dtc}_{\overline{x},\overline{y}}\,\varphi(\overline{x},\overline{y})](\overline{u},\overline{v})$ is also a formula, and its semantics is defined by the equivalence $[\mathbf{dtc}_{\overline{x},\overline{y}}\,\varphi(\overline{x},\overline{y})](\overline{u},\overline{v}) \equiv [\mathbf{tc}_{\overline{x},\overline{y}}\,\varphi(\overline{x},\overline{y}) \land \forall z (\varphi(\overline{x},\overline{z}) \to \overline{y} = \overline{z})](\overline{u},\overline{v})$.
\item Least fixed-point logic ($\mathrm{LFP}$) is obtained by adding the following rules for building formulas: if $\varphi(R,\overline{x})$ is a formula of vocabulary $\tau \cup \{R\}$ with only positive occurrences of $R$, $\overline{x}$ is a tuple of variables, and $\overline{t}$ is a tuple of terms (both matching the arity of $R$), then $[\mathbf{lfp}\,R\overline{x}.\psi](\overline{t})$ and $[\mathbf{gfp}\,R\overline{x}.\psi](\overline{t})$ are also formulas. For their semantics, we need to define some auxiliary notions. The {\em update operator} $F_\psi\colon \mathcal{P}(A^k) \longrightarrow \mathcal{P}(A^k)$ is defined by $F_\psi(R):= \{\overline{a} \mid \tuple{\mfa,R}\models \psi(R,\overline{a})\}$ for any relation $R$, and it is monotone because $R$ occurs only positively in $\psi$. A {\em fixed point} of $F_\psi$ is a relation $R$ such that $F_\psi(R) = R$. Since $F_\psi$ is monotone, it has a least and a greatest fixed point (by Knaster--Tarski Theorem). The semantics is given by: $\mfa \models [\mathbf{lfp}\,R\overline{x}.\psi](\overline{t})$ iff $\overline{t}^\mfa$ is contained in the least fixed point of $F_\psi$ (analogously for $[\mathbf{gfp}\,R\overline{x}.\psi](\overline{t})$ and the greatest fixed point).
\item Partial fixed-point logic ($\mathrm{PFP}$) is obtained by adding the following rule for building formulas: if $\varphi(R,\overline{x})$ is a formula of vocabulary $\tau \cup \{R\}$, $\overline{x}$ is a tuple of variables, and $\overline{t}$ is a tuple of terms (both matching the arity of $R$), then $[\mathbf{pfp}\,R\overline{x}.\psi](\overline{t})$ is also a formula. For the semantics, we consider again the update operator (now not necessarily monotone) and the sequence of its finite stages: $R^0:=\emptyset$ and $R^{m+1}:=F_\psi(R^m)$. In a finite structure $\mfa$, the sequence either reaches a fixed point or it enters a cycle of period greater than one. We define the partial fixed point of $F_\psi$ as the fixed point reached in the former case, or as the empty set in the latter case. Now, the semantics is given by: $\mfa \models [\mathbf{pfp}\,R\overline{x}.\psi](\overline{t})$ iff $\overline{t}^\mfa$ is contained in the partial fixed point of $F_\psi$.

\item Inflationary fixed-point logic ($\mathrm{IFP}$) is obtained by adding the following rules for building formulas: if $\varphi(R,\overline{x})$ is a formula of vocabulary $\tau \cup \{R\}$, $\overline{x}$ is a tuple of variables, and $\overline{t}$ is a tuple of terms (both matching the arity of $R$), then $[\mathbf{ifp}\,R\overline{x}.\psi](\overline{t})$ is also a formula. For its semantics, we need to define some auxiliary notions. An operator $G:\mathcal{P}(B) \longrightarrow \mathcal{P}(B)$ is said to be \emph{inflationary} if $X\subseteq G(X)$ for all $X \in \mathcal{P}(B)$. With any operator $F:\mathcal{P}(B) \longrightarrow \mathcal{P}(B)$ one can associate an inflationary operator $G$ by setting $G(X) := X \cup F(X)$. Iterating $G$ gives a fixed point that we will called the \emph{inflationary fixed point} of $F$.
 The semantics is given by: $\mfa \models [\mathbf{ifp}\,R\overline{x}.\psi](\overline{t})$ iff $\overline{t}^\mfa$ is contained in the inflationary fixed point of $F_\psi$.
\end{itemize} 

The weighted version of each of these logics is defined analogously as in the case of $\mathrm{FO}$ and $\mathrm{SO}$ by expanding the logics $\mathrm{TC}$, $\mathrm{DTC}$, $\mathrm{LFP}$,  $\mathrm{PFP}$, and $\mathrm{IFP}$ with the same weighted constructs as given for $\mathrm{wFO}$ and $\mathrm{wSO}$.  By a famous result of Gurevich and Shelah~\cite{G-S}, on finite structures,  $\mathrm{LFP}$ coincides with $\mathrm{IFP}$ and thus their weighted versions, $\mathrm{wIFP}$ and $\mathrm{wLFP}$, as we have defined them here, will also coincide in expressive power.

\section{Logical characterizations of complexity classes}\label{complexitysec}

We are finally ready to present and prove the main results of the paper: the quantitative versions of several logical characterizations of prominent complexity classes. We may assume that every $\mathfrak{A}\in \str{\tau}_<$ is encoded by a string of $0$s and $1$s. For example, where $\mathfrak{A}= \tuple{A, R_1^\mathfrak{A}, \dots, R_j^\mathfrak{A}}$ with $|A|=n$ (and we may assume in fact that $A=\{0, \dots, n-1\}$) we might let $$\text{enc}(\mathfrak{A})= \text{enc}(R_1^\mathfrak{A}) \cdot \dots \cdot \text{enc}(R_j^\mathfrak{A})$$
where if $R_i^\mathfrak{A}$  is an $l$-ary relation, then $\text{enc}(R_i^\mathfrak{A})$ is a string of symbols of length $n^l$ with a $1$ in its $m$th position if the $m$th tuple of $n^l$ is in $R_i^\mathfrak{A}$ and a $0$ otherwise.

\begin{Def}\label{ch}
Consider a weighted logic $\mathrm{L}[\mathcal{S}]$ (with weights in a semiring $\mathcal{S}$) and a weighted complexity class $\mathcal{C}$, which is simply a collection of series. We say that $\mathrm{L}[\mathcal{S}]$ \emph{captures} $\mathcal{C}$ over ordered structures in the vocabulary $\tau=\{R_1, \dots, R_j\}$ if:
\begin{itemize}
\item[(1)] For every $\mathrm{L}[\mathcal{S}]$-formula $\phi$, there exists  $P\in \mathcal{C}$ such that $P(\text{enc}(\mathfrak{A})) = \|\phi\|(\mathfrak{A})$ for every finite ordered $\tau$-structure $\mathfrak{A}$, and
\item[(2)] For every $P\in \mathcal{C}$, there exists an  $\mathrm{L}[\mathcal{S}]$-formula $\phi$ such that $P(\text{enc}(\mathfrak{A})) = \|\phi\|(\mathfrak{A})$ for every finite ordered $\tau$-structure $\mathfrak{A}$.
\end{itemize}

\end{Def}

The seminal Fagin's Theorem characterizes $\mathsf{NP}$ for ordered structures by existential second-order logic. Our goal is to present a weighted version of this result with arbitrary semirings as weight structures.
Whereas in the classical setting one obtains an equivalence between the existence of runs of a
Turing machine vs.\ the satisfiability of an existential logical formula, in the weighted setting we have
to derive a one-to-one correspondence between the runs of a Turing machine and satisfying assignments
for the formulas. Moreover, due to the absence of a natural negation function in the semiring,
here, beyond the classical setting, we need conjunctions and universal quantifications.
For weighted finite automata over words, in~\cite{DrosteGastin:07} weighted conjunction and universal quantification
turned out to be too powerful in general and had to be restricted. Surprisingly, here we do not need
these restrictions, but we can show the expressive equivalence between weighted polynomial-time Turing machines
and the full weighted existential second-order logic. Moreover, we do not need commutativity of the multiplication of $\mathcal{S}$
(essential in~\cite{DrosteGastin:07}), but can develop our characterization for arbitrary, also non-commutative, semirings $\mathcal{S}$.
This is due to new constructions, in this setting, for the involved weighted Turing machines. By $\mathrm{wESO}$ we mean the fragment of $\mathrm{wSO}$ where the only  second-order quantifiers appear at the beginning of the formula and are additive existential.


\begin{Thm}[Weighted Fagin's theorem]\label{thm:wfagin}
Let $\mathcal{S}$ be a semiring.
\begin{itemize}
\item[(i)] The logic $\mathrm{wESO}[\mathcal{S}]$ captures $\mathsf{NP}[\mathcal{S}]$ over \emph{ordered} finite structures in the vocabulary $\tau=\{R_1, \dots, R_j\}$. 
\item[(ii)] Assume that $\mathcal{S}$ is idempotent and commutative. Then, the logic $\mathrm{wESO}[\mathcal{S}]$  captures $\mathsf{NP}[\mathcal{S}]$ over \emph{all} finite structures in the vocabulary $\tau=\{R_1, \dots, R_j\}$.

\end{itemize}
\end{Thm}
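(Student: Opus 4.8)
The plan is to establish both inclusions of Definition~\ref{ch} for each part, following the architecture of the classical proof of Fagin's theorem but upgrading every classical ``there exists an accepting run/a satisfying assignment'' to a weight-preserving bijection between accepting computations and the assignments that make the relevant Boolean skeleton true. For Part~(i), inclusion~(2) (from $\mathsf{NP}[\mathcal{S}]$ to $\mathrm{wESO}[\mathcal{S}]$), I would take a weighted machine $\mathcal{M}$ running in time $\le n^k$ and encode a computation by a Cook--Levin style tableau whose cells are indexed by $k$-tuples of universe elements, so that via the linear order they range over times and tape positions up to $n^k$. Using additive existential second-order variables $\bar X$ I guess the tape contents, head positions and states at every time, and, separately, for each time step the transition $e\in\Delta$ applied. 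A first-order Boolean formula $\beta_{\mathrm{valid}}$ checks the usual local consistency conditions (correct initial configuration, reading $\mathrm{enc}(\mfa)$ off the relations $R_i$ and the order; correct window transitions; an accepting final state); to make the length uniform I clock the machine to exactly $n^k$ steps by a deterministic null transition of weight $\sone$ after halting, yielding a genuine bijection between accepting computations of $\mathcal{M}$ and assignments satisfying $\beta_{\mathrm{valid}}$. The weight of a computation is recovered by the weighted product
\[
\bigfmalop \bar t.\ \Bigl(\textstyle\bigfplus_{e\in\Delta}\ \beta_e(\bar t)\fmal s_e\Bigr),
\]
where $\beta_e(\bar t)$ is the Boolean test ``transition $e$ fires at time $\bar t$'', $s_e$ is the atomic weight constant $\nu(e)$, the sum over $\Delta$ is a finite $\fplus$, and $\bigfmalop\bar t$ abbreviates a nesting of $k$ universal quantifiers. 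The full sentence is $\phi=\bigfplusop\bar X.\,(\beta_{\mathrm{valid}}\fmal\bigfmalop \bar t.(\dots))$: invalid guesses give $\beta_{\mathrm{valid}}=\szero$ and a zero summand, each valid guess contributes exactly $\nu(e_1)\cdots\nu(e_n)=\nu(\gamma)$, and the leading $\bigfplusop\bar X$ sums these to $\|\mathcal{M}\|(\mathrm{enc}(\mfa))$. Non-commutativity is harmless because $\bigfmalop\bar t$ multiplies factors in the $<$-induced order, which I arrange (via the tuple-to-integer correspondence) to coincide with the temporal order.

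For Part~(i), inclusion~(1) (from $\mathrm{wESO}[\mathcal{S}]$ to $\mathsf{NP}[\mathcal{S}]$), given $\phi=\bigfplusop\bar X.\varphi$ with $\varphi\in\mathrm{wFO}$, I would build $\mathcal{M}$ by induction on $\varphi$, realizing each logical operation by a machine construction whose behaviour realizes the matching semiring operation: a constant $s$ is a single accepting run of weight $s$; a Boolean $\beta$ is model-checked deterministically in polynomial time (FO model checking on ordered structures, with the guessed relations on tape) and yields one run of weight $\sone$ or no accepting run; $\varphi_1\fplus\varphi_2$ is nondeterministic choice between submachines; $\varphi_1\fmal\varphi_2$ is their sequential composition, so weights multiply in the correct order, distributivity turning the product of the two behaviours into a sum over pairs; $\bigfplusop x$ is a nondeterministic choice of an element; and $\bigfmalop x$ is a deterministic loop running the submachine once per element in increasing $<$-order and concatenating the runs. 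The outer $\bigfplusop\bar X$ is realized by first guessing the polynomially many bits of each relation onto a tape. Each step adds polynomial overhead and the nesting depth is constant, so $\mathcal{M}$ runs in polynomial time and an induction gives $\|\mathcal{M}\|=\|\phi\|\circ\mathrm{enc}$.

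For Part~(ii) the signature has no built-in $<$, so I would reduce to the ordered case by guessing the order. Commutativity makes $\|\phi\|(\mfa)$ well defined and independent of any enumeration of the universe (so that the universal quantifiers are meaningful without $<$), which immediately yields inclusion~(1) by applying the ordered construction to the order implicit in $\mathrm{enc}(\mfa)$. For inclusion~(2), given $P\in\mathsf{NP}[\mathcal{S}]$ whose value on $\mathrm{enc}(\mfa)$ is independent of the chosen encoding (as required for it to define a function on unordered structures), I guess a binary relation $X_<$ by an additive existential second-order quantifier, multiply the tableau formula of Part~(i) --- now referring to $X_<$ in place of $<$ --- by the Boolean test ``$X_<$ is a linear order'', and keep everything else. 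For each of the $n!$ genuine linear orders $<'$ the inner formula evaluates to $P(\mathrm{enc}_{<'}(\mfa))=P(\mathrm{enc}(\mfa))$ by order-invariance, while non-orders contribute $\szero$; the outer sum is therefore $P(\mathrm{enc}(\mfa))\splus\dots\splus P(\mathrm{enc}(\mfa))$ with $n!$ summands, and idempotency collapses this to $P(\mathrm{enc}(\mfa))$.

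I expect the main obstacle to be the passage from a mere existential statement to an exact weight identity. First, the correspondence between accepting computations and satisfying assignments must be an honest bijection, which forces the deterministic clocking and the separation of ``which transition fires'' into its own guessed relation; and every factor must be multiplied in precisely the order dictated by the (possibly non-commutative) definition of $\nu(\gamma)$, handled through the $<$-ordered semantics of $\bigfmalop$. Second, in Part~(ii) the delicate point is arranging the guess-an-order construction so that the $n!$-fold overcount yields the \emph{same} value in every order, which is exactly what lets order-invariance of $P$ together with idempotency absorb the multiplicity.
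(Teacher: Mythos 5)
Your proposal is correct and follows essentially the same route as the paper's proof: the same formula-to-machine induction (deterministic weight-$\sone$ glue for $\fmal$ and $\bigfmalop x$, distributivity for products, nondeterministic guessing for sums), the same machine-to-formula direction via a Cook--Levin tableau indexed by $k$-tuples with the machine padded by weight-$\sone$ idle transitions to exactly $n^k$ steps and the weight extracted by $\bigfmalop\bar t.\bigfplus_{e\in\Delta}(\cdots)$ in temporal order, and the same guess-an-order-plus-idempotency argument for part (ii). The only cosmetic divergence is that you guess the fired transition as an extra existential relation, whereas the paper recovers it first-order from consecutive configurations; both yield the needed weight-preserving bijection.
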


Let us indicate some ideas for the proof. For ($i$),
first, for a given $\mathrm{wESO}$-formula $\phi$, we have to construct an $\mathsf{NP}$ Turing machine $\mathcal{M}$ with $\|\phi\| = \|\mathcal{M}\|$. For first-order formulas $\beta$, we can follow the classical proof. Regarding weighted formulas $\phi$, let us comment on the interesting cases. For weighted conjunctions and universal quantifications, we employ new constructions. Since we are dealing with Turing machines, we can execute weighted Turing machines for the components successively, by saving the word and using transitions of weight $1$ in a deterministic way to restore the initial tape configuration. We can show, using the distributivity of the semiring, that the constructed nondeterministic machine $\mathcal{M}$ computes precisely the values prescribed by the semantics of the weighted conjunction or the weighted universal quantifications, respectively.

Second, given a weighted NP Turing machine $\mathcal{M}$, by the assumption on its polynomial time usage, we construct a second-order formula $\psi$ reflecting the accepting computation paths of $\mathcal{M}$ and their employed transitions in a one-to-one correspondence; this enables us to incorporate the weights of the transitions by means of constants in the formula. The order is used for the construction of the formula such that the interpretation of weighted universal quantification reflects precisely the weights of the computation sequences of the given Turing machine.

For ($ii$), the order in universal quantifications now is taken care of by the commutativity of the multiplication, and the existence of an order is taken care of by an additional existential second-order quantification where idempotency of $\mathcal{S}$ implies that we obtain the same value.

From Theorem~\ref{thm:wfagin} and Examples~\ref{exm1} and~\ref{exm2}, we immediately obtain the following corollary:

\begin{Cor} For ordered structures in a finite vocabulary $\tau=\{R_1, \dots, R_j\}$, we have that:
\begin{itemize}
\item[(1)]  $\mathrm{wESO}[\mathbb{B}]$  \emph{captures} $\mathsf{NP}$ (originally proved in~\cite{Fagin}).
\item[(2)]  $\mathrm{wESO}[\mathbb{N}]$  \emph{captures} $\# \mathsf{P}$ (originally proved in~\cite{Arenas} and~\cite{Saluja:95}).
\item[(3)]  $\mathrm{wESO}[\mathbb{Z}]$  \emph{captures} $ \mathsf{GapP}$ (originally proved in~\cite{Arenas}).
\item[(4)]  $\mathrm{wESO}[\mathcal{S}_{\max}]$  (respectively, $\mathrm{wESO}[\mathcal{S}_{\min}]$). \emph{captures} $ \mathsf{MaxP}$ ($ \mathsf{MinP}$) (originally proved in~\cite{Arenas}).
\item[(5)]  $\mathrm{wESO}[\mathbb{Z}_2]$  \emph{captures} $\bigoplus \mathsf{P}$.
\item[(6)]  $\mathrm{wESO}[\mathbb{Z}_q]$  \emph{captures} $\mathsf{MOD}_q-\mathsf{P}$.
\item[(7)] 
 $\mathrm{wESO}[\mathbb{N}_{\max}]$  (respectively, $\mathrm{wESO}[\mathbb{N}_{\min}]$) \emph{captures} $ \mathsf{MaxP}[O(\log n)]$ ($ \mathsf{MinP}[O(\log n)]$). 
\item[(8)]  $\mathrm{wESO}[\mathcal{F}_*]$  \emph{captures} the class of all fuzzy languages realizable by fuzzy Turing machines with t-norm $*$
in polynomial time.
\item[(9)]$\mathrm{wESO}[2^{\Sigma^*_2}_\mathrm{fin}]$ \emph{captures} $\mathsf{NPMV}$.
\item[(10)] $\mathrm{wESO}[\mathbb{N}\tuple{\Sigma^*_2}]$  \emph{captures} the class of all multiset-valued functions computed by nondeterministic polynomial-time transducer machines with counting.
\end{itemize}
\end{Cor}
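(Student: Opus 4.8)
The plan is to derive all ten statements as immediate instantiations of Theorem~\ref{thm:wfagin}(i), which asserts that $\mathrm{wESO}[\mathcal{S}]$ captures $\mathsf{NP}[\mathcal{S}]$ over ordered finite $\tau$-structures for an \emph{arbitrary} semiring $\mathcal{S}$. Since every item of the corollary concerns ordered structures, part~(i) alone suffices and neither idempotency nor commutativity (which are only needed for part~(ii)) plays any role here. It therefore only remains, for each item, to substitute the specific semiring $\mathcal{S}$ and to replace the generic series class $\mathsf{NP}[\mathcal{S}]$ by the named classical complexity class; this replacement is precisely the content recorded in Examples~\ref{exm1} and~\ref{exm2}. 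Thus each item is a one-line deduction: chain the capture statement of Theorem~\ref{thm:wfagin}(i) at the appropriate semiring with the equality of series classes supplied by the corresponding bullet of the Examples.

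Concretely, I would read off the correspondences as follows. For~(1), take $\mathcal{S}=\mathbb{B}$ with each transition weighted by $1$, so that $\mathsf{NP}[\mathbb{B}]=\mathsf{NP}$ by the first bullet of Example~\ref{exm1}. For~(2),~(3),~(5), and~(6) take $\mathcal{S}$ to be $\tuple{\Nz,+,\cdot,0,1}$, $\tuple{\mathbb{Z},+,\cdot,0,1}$, $\mathbb{Z}_2$, and $\mathbb{Z}_q$ respectively, with the weightings prescribed in Example~\ref{exm1}, yielding $\#\mathsf{P}$, $\mathsf{GapP}$, $\bigoplus\mathsf{P}$, and $\mathsf{MOD}_q\text{-}\mathsf{P}$. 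For~(4) and~(7)--(10) the relevant semirings and identifications come from Example~\ref{exm2}: $\mathcal{S}_{\max}$ (resp.\ $\mathcal{S}_{\min}$) gives $\mathsf{MaxP}$ (resp.\ $\mathsf{MinP}$); $\mathbb{N}_{\max}$ (resp.\ $\mathbb{N}_{\min}$) gives $\mathsf{MaxP}[O(\log n)]$ (resp.\ $\mathsf{MinP}[O(\log n)]$); $\mathcal{F}_*$ gives the class of fuzzy languages realizable by fuzzy Turing machines with t-norm $*$ in polynomial time; $2^{\Sigma^*_2}_{\mathrm{fin}}$ gives $\mathsf{NPMV}$; and $\mathbb{N}\tuple{\Sigma^*_2}$ gives the class of multiset-valued functions computed by nondeterministic polynomial-time transducer machines with counting.

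The only point requiring care — and hence the main, if mild, obstacle — is verifying that the identifications in the Examples are genuine \emph{equalities of collections of series}, so that the notion of capture from Definition~\ref{ch} transfers verbatim from $\mathsf{NP}[\mathcal{S}]$ to the classical class. Definition~\ref{ch} compares $\|\phi\|(\mathfrak{A})$ with $P(\text{enc}(\mathfrak{A}))$ for a series $P$ in the class, so for each semiring one must confirm that the behavior $\|\mathcal{M}\|$ of a weighted Turing machine (the sum over accepting computations of the products of transition weights) specializes, under the chosen weighting, to exactly the function defining the classical class: the number of accepting paths for $\#\mathsf{P}$, the signed count giving the gap function for $\mathsf{GapP}$, the count modulo $q$ for $\mathsf{MOD}_q\text{-}\mathsf{P}$, the maximum in the radix order for $\mathsf{MaxP}$, and so on. These matchings are routine and are already established in the cited sources~\cite{Damm,Kos}; granting them, each item follows immediately. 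I would therefore present the corollary as a short enumeration, each line citing Theorem~\ref{thm:wfagin}(i) together with the appropriate bullet of Example~\ref{exm1} or~\ref{exm2}.
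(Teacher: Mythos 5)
Your proposal matches the paper's own derivation exactly: the corollary is obtained by instantiating Theorem~\ref{thm:wfagin}(i) at each semiring and invoking the identifications of $\mathsf{NP}[\mathcal{S}]$ with the named classical classes recorded in Examples~\ref{exm1} and~\ref{exm2}. Your added caveat that these identifications must be equalities of series classes (settled in the cited sources) is a sound observation, fully consistent with how the paper treats them.
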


\begin{Rmk}
It is worth observing that the proofs of (2)-(4) in~\cite{Arenas} (Prop.~4.2, Cor.~4.8, and Thm.~4.10) are (as expected) different from ours. Our argument works in all those cases but neither of the three arguments given in~\cite{Arenas}  works for our more general setting.
\end{Rmk}

Our next application of the weighted Fagin's theorem consist in providing a natural computational problem complete for the class $\mathsf{NP}[\mathcal{S}]$ for certain semirings $\mathcal{S}$. Given a semiring $\mathcal{S}$, alphabets $\Sigma_1, \Sigma_2$, and series $\sigma_1 \in S\llangle \Sigma_1^*\rrangle$ and $\sigma_2 \in S\llangle \Sigma_2^*\rrangle$, we say that $\sigma_1$ is {\em polynomially many-one reducible} to $\sigma_2$ ($\sigma_1 \leq_m \sigma_2$, in symbols) if there is an $f\colon \Sigma_1^* \longrightarrow \Sigma_2^*$ computable deterministically in polynomial time such that $\tuple{\sigma_2, f(w)} = \tuple{\sigma_1,w}$ for each $w \in \Sigma_1^*$. A series $\sigma \in S\llangle \Sigma^*\rrangle$ is said to be {\em $\mathsf{NP}[\mathcal{S}]$-hard} if $\sigma' \leq_m \sigma$ for all $\sigma'$ in $\mathsf{NP}[\mathcal{S}]$. If, moreover, $\sigma$ belongs to $\mathsf{NP}[\mathcal{S}]$, then it is called {\em $\mathsf{NP}[\mathcal{S}]$-complete}.

Fix an infinite set $X$. The language of the weighted propositional logic over a finitely generated semiring $\mathcal{S}$ is built from $X$ as propositional variables, elements of $S$ as truth-constants, and logical connectives $\land, \lor, \neg$ (where negation is only applied to propositional variables). Let $\mathtt{Fmla}[\mathcal{S}]$ be the set of all formulas.
A {\em truth assignment} is a mapping $V \colon X \longrightarrow \{0,1\}$ extended to all formulas in the following way:
\begin{enumerate}
\item For each propositional variable $X$, let $\overline{V}(x) := V(x)$ and $\overline{V}(\neg x) := 1$ iff $V (x) = 0$. Moreover, let $\overline{V}(a) := a$ for each $a \in S$.
\item $\overline{V}(\varphi \lor \psi) := \overline{V}(\varphi) + \overline{V}(\psi)$ and $\overline{V}(\varphi \land \psi) := \overline{V}(\varphi) \cdot \overline{V}(\psi)$.
\end{enumerate}

For each formula $\varphi\in\mathtt{Fmla}[\mathcal{S}]$, let $X_\varphi$ be the set of propositional variables that occur in $\varphi$. Clearly, $\overline{V}(\varphi)$ depends only the values of $V$ on $X_\varphi$. The `problem' $\text{SAT}[\mathcal{S}]$ is the series $\sigma\colon\mathtt{Fmla}[\mathcal{S}] \longrightarrow S$ defined as follows: $\text{SAT}[\mathcal{S}](\varphi) = \sum_{V \in \{0,1\}^{X_\varphi}} \overline{V}(\varphi)$.

The following corollary  of our weighted version of Fagin's theorem has also appeared as~\cite[Thm.~6.3]{Kos} with a direct proof. Our  proof generalizes the reasoning for the Boolean case in~\cite{G-K-L}.

\begin{Cor}[Weighted Cook--Levin's theorem]
Let $\mathcal{S}$ be a finitely generated semiring. Then, $\mathsf{SAT}[\mathcal{S}]$ is $\mathsf{NP}[\mathcal{S}]$-complete.
\end{Cor}

Now it is natural to wonder what happens with other well-known descriptive complexity results. In the reminder of this section we will tackle a few more of these. We start with the Immerman--Vardi's theorem, a result that first appeared in the Boolean case in the papers~\cite{Immerman1, Vardi}. Our own approach is inspired by~\cite[Thm.~4.4]{Arenas} where a version of the result for the counting complexity class $\mathsf{FP}$ is provided using a weighted logic with the semiring $\mathbb{N}$. We must observe, however, that our proof is a generalization of that in~\cite{Arenas} that works for all semirings and not only $\mathbb{N}$.

\begin{Thm}[Weighted Immerman--Vardi's theorem]\label{I-V} The logic $\mathrm{wLFP}[\mathcal{S}]$ (with weights in a semiring $\mathcal{S}$) \emph{captures} $\mathsf{FP}[\mathcal{S}]$ over ordered structures in the vocabulary $\tau=\{R_1, \dots, R_j\}$. 

\end{Thm}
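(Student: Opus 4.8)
The plan is to prove the two inclusions required by Definition~\ref{ch} separately, exploiting the ordered structure throughout. I will treat this as the weighted analogue of the classical Immerman–Vardi argument, where the linear order allows me to simulate polynomial-time computation by iterating a first-order-definable step relation via a fixed-point operator, and where weights are carried along by the semiring constants $s$ available in $\mathrm{wLFP}$.

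First I would show that every $\mathrm{wLFP}[\mathcal{S}]$-formula $\phi$ defines a series in $\mathsf{FP}[\mathcal{S}]$, i.e.\ part (1) of the capturing definition. The key observation is that on an ordered structure $\mfa$ with $|A|=n$, every quantifier ranges over a set of polynomial size, and the unweighted least-fixed-point subformulas stabilize in polynomially many stages (since $F_\psi$ is monotone on $\mcp(A^k)$ and the lattice has height at most $n^k$). I would proceed by induction on the structure of $\phi$, showing that a deterministic polynomial-time machine can, given $\text{enc}(\mfa)$, output a term in $T(G)$ (for a suitable finite generating set $G$ containing the constants appearing in $\phi$) whose value in $\mathcal{S}$ equals $\ser{\phi}(\mfa)$. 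The arithmetic constructs $\fplus, \fmal, \bigfplusop x, \bigfmalop x$ translate directly into nested $\sum/\prod$ expressions of polynomial size because the ranges are polynomial; the order supplies the enumeration $a_1,\dots,a_k$ needed to make $\bigfmalop$ well-defined and machine-computable. For the fixed-point operator I would compute the (Boolean) least fixed point first in polynomial time by stage iteration, and then substitute the resulting relation into the weighted part. The bound $k$ on the alternation depth in the definition of $\mathsf{FP}[\mathcal{S}]$ must be respected: I would argue that the term produced can be flattened into the required $\sum^{m_1}\prod^{n_1}\dotsb$ normal form of bounded alternation depth, using distributivity of $\mathcal{S}$ to reorganize the expression.

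Conversely, for part (2) I would show that every series $P\in\mathsf{FP}[\mathcal{S}]$ is definable by some $\mathrm{wLFP}[\mathcal{S}]$-formula. Here I follow the Immerman–Vardi strategy: a deterministic polynomial-time Turing machine $\mathcal{M}$ witnessing $P$ has configurations encodable by tuples over $A$ (using the order to address tape cells and time steps up to $n^c$), and the single-step transition relation of $\mathcal{M}$ is first-order definable on ordered structures. I would use an $\mathbf{lfp}$ operator to define the reachability/computation relation that captures the entire run of $\mathcal{M}$ on input $\text{enc}(\mfa)$, exactly as in the Boolean case. The genuinely weighted part is to recover the output word of the form $\sum^{m_1}\prod^{n_1}\dotsb s_{i_1 j_1\dotsb}$ and its value: once the fixed point has identified, first-order-definably, the positions and indices of the term output by $\mathcal{M}$, I would read off the semiring constants $s_{i_1 j_1 \dotsb}$ as weight constants in the formula and assemble them using $\bigfplusop$ and $\bigfmalop$ over the (ordered, hence enumerable) index tuples, matching the bounded alternation depth $k$ by using $k$ nested pairs of sum/product quantifiers.

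The main obstacle I anticipate is the faithful bookkeeping of the \emph{weights} through the fixed-point iteration in the converse direction. In the classical proof the fixed point only needs to define a Boolean relation (membership in the accepting computation), but here I must additionally extract, from the purely combinatorial description of $\mathcal{M}$'s output, the actual semiring elements and their arrangement into a bounded-alternation normal form, and then re-express this arrangement using the weighted quantifiers $\bigfplusop x$ and $\bigfmalop x$ in a way whose semantics provably equals $P(\text{enc}(\mfa))$. This requires care so that the $\mathbf{lfp}$ stays Boolean (acting on $\tau\cup\{R\}$-structures with $R$ occurring positively) while the weights enter only at the outer first-order-weighted layer via the constants $s$. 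I expect the correctness argument to rest on the fact that, over ordered structures, the lexicographic enumeration built into the semantics of $\bigfmalop$ matches the order in which $\mathcal{M}$ emits the factors of its output term, so that associativity and distributivity of $\mathcal{S}$ guarantee the two values coincide; non-commutativity is not an issue precisely because the order fixes a canonical factorization on both sides.
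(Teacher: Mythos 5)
Your proposal is correct and follows essentially the same route as the paper: part (1) by induction on the formula, using that $\mathsf{FP}[\mathcal{S}]$ is closed under polynomial sums and products, and part (2) by reducing to Boolean $\mathrm{LFP}$-definability of the predicate ``the entry of the machine's output term indexed by $(\overline{a}_1,\overline{b}_1,\dotsc,\overline{a}_k,\overline{b}_k)$ equals $s_p$'' and then wrapping these with $k$ alternating pairs of $\bigfplusop/\bigfmalop$ quantifiers and constants $s_p$ (the paper packages the selection step as $\bigfmal_{p}(\phi_p \triangleright s_p)$ with $\beta \triangleright \varphi = (\beta\fmal\varphi)\fplus(\lnot\beta\fmal\sone)$). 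The only cosmetic difference is that the paper invokes the classical Immerman--Vardi theorem as a black box for these languages $\mathcal{L}_p$ rather than re-encoding the machine's configurations by hand as you sketch.
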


\begin{Cor} For ordered structures in a finite vocabulary $\tau=\{R_1, \dots, R_j\}$, we have that:
\begin{itemize}
\item[(1)]  $\mathrm{wLFP}[\mathbb{B}]$   \emph{captures} $\mathsf{P}$ (originally  proved in~\cite{Immerman1, Vardi}).
\item[(2)]  $\mathrm{wLFP}[\mathbb{N}] $  \emph{captures} $\mathsf{FP}$ (originally proved in~\cite{Arenas}).

\end{itemize}

\end{Cor}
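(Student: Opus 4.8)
The plan is to obtain both items as direct specializations of the Weighted Immerman--Vardi's theorem (Theorem~\ref{I-V}), which has already been established for an arbitrary semiring $\mathcal{S}$. In each case I would instantiate the semiring parameter, and then replace the weighted class $\mathsf{FP}[\mathcal{S}]$ appearing in the conclusion of that theorem by the classical class it is known to equal, invoking the two Examples that compute $\mathsf{FP}[\mathbb{B}]$ and $\mathsf{FP}[\mathbb{N}]$ respectively. Thus the whole corollary reduces to two substitutions into a statement already proved.

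For item (1), I would take $\mathcal{S} = \mathbb{B}$, the Boolean semiring. Theorem~\ref{I-V} then states that $\mathrm{wLFP}[\mathbb{B}]$ captures $\mathsf{FP}[\mathbb{B}]$ over ordered $\tau$-structures, in the precise sense of Definition~\ref{ch}. It remains only to identify $\mathsf{FP}[\mathbb{B}]$ with $\mathsf{P}$: over the Boolean semiring a series $\sigma\colon \Sigma^* \longrightarrow \{\bzero,\bone\}$ is merely the characteristic function of a language, and the Example following the definition of $\mathsf{FP}[\mathcal{S}]$, namely~\cite[Example 5.4]{Kos}, records that $\mathsf{FP}[\mathbb{B}] = \mathsf{P}$. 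Substituting this equality into the conclusion of Theorem~\ref{I-V} gives that $\mathrm{wLFP}[\mathbb{B}]$ captures $\mathsf{P}$, which is the quantitative form of the classical Immerman--Vardi characterization~\cite{Immerman1, Vardi}.

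For item (2), I would take $\mathcal{S} = \mathbb{N}$, the semiring of natural numbers. Theorem~\ref{I-V} yields that $\mathrm{wLFP}[\mathbb{N}]$ captures $\mathsf{FP}[\mathbb{N}]$; here the weighted semantics evaluates its terms to genuine natural numbers, so the output of the witnessing machine is an honest count rather than a mere Boolean value. The corresponding Example, namely~\cite[Example 5.5]{Kos}, records that $\mathsf{FP}[\mathbb{N}] = \mathsf{FP}$, and substituting this equality shows that $\mathrm{wLFP}[\mathbb{N}]$ captures $\mathsf{FP}$, recovering the result of~\cite{Arenas}.

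Since the argument is a two-fold instantiation of a theorem established for all semirings, there is no genuine obstacle. The only point deserving care is that the definition of $\mathsf{FP}[\mathcal{S}]$ adopted in this paper imposes an alternation bound on the emitted terms, a restriction absent from the definition in~\cite{Kos}; I would therefore confirm that the machines witnessing the inclusions $\mathsf{P} \subseteq \mathsf{FP}[\mathbb{B}]$ and $\mathsf{FP} \subseteq \mathsf{FP}[\mathbb{N}]$ in the two cited Examples already output terms of the bounded form $\sum^{n}_{i=1}\prod^{m}_{j=1} s_{ij}$, which is precisely what those Examples observe. Hence the identifications $\mathsf{FP}[\mathbb{B}] = \mathsf{P}$ and $\mathsf{FP}[\mathbb{N}] = \mathsf{FP}$ survive the alternation bound, and the corollary follows in both cases.
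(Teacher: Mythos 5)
Your proposal is correct and matches the paper's own (implicit) argument: the corollary is stated without a separate proof precisely because it is the instantiation of Theorem~\ref{I-V} at $\mathcal{S}=\mathbb{B}$ and $\mathcal{S}=\mathbb{N}$, combined with the Examples identifying $\mathsf{FP}[\mathbb{B}]=\mathsf{P}$ and $\mathsf{FP}[\mathbb{N}]=\mathsf{FP}$. Your caveat about the alternation bound is exactly the point those Examples already record, namely that the machines from~\cite{Kos} output terms already of the form $\sum_{i=1}^{n}\prod_{j=1}^{m} s_{ij}$, so the identifications survive the paper's stricter definition.
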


\begin{Rmk} Observe that using second-order Horn logic  (which is known to capture $\mathsf{P}$~\cite{G}) instead of least fixed-point logic,  would not work for us, as in the weighted version one can encode a $\# \mathsf{P}$-complete problem (namely $\# \mathrm {HORNSAT}$). This was already noted in~\cite{Arenas}.

\end{Rmk}

In the next result, $\mathrm{wPFP}[\mathcal{S}] + \{\prod X, \sum X\}$ will denote the logic that is obtained from $\mathrm{wPFP}[\mathcal{S}]$ by the addition of the second-order quantitative quantifiers $\prod X$ and $\sum X$. Clearly, when  $\mathcal{S} = \mathbb{B}$, this is the same as second-order logic with partial fixed points. The Boolean counterpart of Theorem~\ref{space}, namely that second-order logic extended with partial fixed points characterizes $\mathsf{PSPACE}$ is folklore, but a proof can be found in~\cite[Thm.~4]{Richerby}. The classical argument also uses the result for partial fixed-point logic in~\cite{Vianu, Vardi} stating that the logic characterizes $\mathsf{PSPACE}$ over ordered structures.

\begin{Thm} \label{space}
The logic $\mathrm{wPFP}[\mathcal{S}] + \{\prod X, \sum X\}$ (with weights in a semiring $\mathcal{S}$) \emph{captures} $\mathsf{FPSPACE}[\mathcal{S}]$ over ordered structures in the vocabulary $\tau=\{R_1, \dots, R_j\}$. 
\end{Thm}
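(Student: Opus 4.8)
The plan is to establish both inclusions of Definition~\ref{ch} by leveraging the corresponding classical characterization for $\mathsf{PSPACE}$ together with the machinery already developed for Theorem~\ref{thm:wfagin} and Theorem~\ref{I-V}. The central observation is that $\mathsf{FPSPACE}[\mathcal{S}]$ asks for a deterministic polynomial-space machine that outputs a term of the form $\sum^{m}_{i=1}\prod^{n}_{j=1} s_{ij}$ in $T(G)$ whose value in $\mathcal{S}$ equals $\sigma(w)$, where the term itself may have exponential length. The two quantitative second-order quantifiers $\prod X$ and $\sum X$ are precisely what is needed to generate such exponentially long sums and products, since ranging $X$ over all subsets of $A^{\mathrm{ar}(X)}$ produces $2^{|A|^{\mathrm{ar}(X)}}$ summands or factors, matching the combinatorial explosion that a $\mathsf{PSPACE}$ machine can traverse while only using polynomial space for a counter.

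\textbf{From formulas to machines (part (1)).}
First I would show that every $\mathrm{wPFP}[\mathcal{S}] + \{\prod X, \sum X\}$-formula $\phi$ defines a series in $\mathsf{FPSPACE}[\mathcal{S}]$. The strategy is a structural induction on $\phi$, where one simultaneously maintains that a polynomial-space machine can, on input $\text{enc}(\mfa)$, output a term in $T(G)$ evaluating to $\|\phi\|(\mfa)$. The Boolean skeleton $\beta$ is handled by appealing to the classical fact that $\mathrm{PFP}$ (indeed second-order $\mathrm{PFP}$) is decidable in $\mathsf{PSPACE}$ over ordered structures, as in~\cite{Richerby, Vianu, Vardi}; the partial fixed-point iteration is simulated by iterating the update operator $F_\psi$ in polynomial space, using that the number of stages before a cycle is at most $2^{|A|^k}$, which can be counted with a polynomial-size counter. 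For the weighted constructs $\fplus, \fmal, \bigfplusop x, \bigfmalop x$, one concatenates or nests the subterms produced recursively, exactly as in the $\mathsf{FPSPACE}$-reasoning underlying Theorem~\ref{I-V}. The genuinely new cases are $\bigfplusop X.\varphi$ and $\bigfmalop X.\varphi$: here the machine iterates over all subsets $I \subseteq A^{\mathrm{ar}(X)}$ in the lexicographic order fixed by $<$, each encoded in polynomial space, and for each one recursively emits the term for $\varphi$ under $\rho[X \to I]$, writing a $\splus$ or $\smal$ symbol between consecutive outputs. The output is long but the working space remains polynomial, so the resulting series lies in $\mathsf{FPSPACE}[\mathcal{S}]$.

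\textbf{From machines to formulas (part (2)) and the main obstacle.}
For the converse, given $\sigma \in \mathsf{FPSPACE}[\mathcal{S}]$ computed by a deterministic polynomial-space machine $N$ emitting a term whose value is $\sigma(\text{enc}(\mfa))$, I would first apply the \emph{Boolean} $\mathsf{PSPACE}$ characterization to capture, in second-order $\mathrm{PFP}$, the predicates describing the computation of $N$: which semiring constant $s_{ij}$ sits at a given position of the output term, and the bracketing structure of the nested $\sum$ and $\prod$. Concretely, the positions in the exponentially long output term are addressed by tuples of elements and by second-order objects encoding subsets, so that a $\prod X$ (respectively $\sum X$) quantifier can iterate over the factors (respectively summands) in the correct order. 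The weighted value is then assembled by placing the constant $s$ at the corresponding leaf via the Boolean guard $\beta \fmal s$ and combining leaves with $\bigfmalop x$, $\bigfplusop x$, $\prod X$, and $\sum X$ according to the recovered term structure. I expect the main obstacle to be \textbf{faithfully encoding the bracketing of an exponentially long, arbitrarily nested $\sum/\prod$ expression} using only the fixed, polynomially-many alternations permitted by the logic together with the two second-order quantifiers: one must argue that the definition of $\mathsf{FPSPACE}[\mathcal{S}]$ (via the bounded-alternation term form $\sum\prod\cdots\sum\prod$) lets us stratify the output term into a constant number of quantifier blocks, each block ranging over a second-order set that indexes one layer of the sum-of-products, so that the correspondence between the machine's output positions and the nested quantifier ranges is both $\mathsf{PSPACE}$-definable and order-respecting. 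Once this indexing is shown to be expressible in the Boolean $\mathrm{PFP}$ substrate, the weighted quantifiers reconstruct the value, and idempotency or commutativity are \emph{not} needed since we work over ordered structures throughout.
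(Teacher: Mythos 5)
Your proposal is correct and follows essentially the same route as the paper's proof: part (1) via closure of $\mathsf{FPSPACE}[\mathcal{S}]$ under the Boolean $\mathrm{PFP}$ cases and under exponential sums and products (polynomial-space counters over subsets of $A^l$), and part (2) by indexing the positions of the exponentially long output term with tuples of second-order objects, defining via the classical $\mathrm{PFP}=\mathsf{PSPACE}$ theorem the Boolean languages $\mathcal{L}_p$ that identify which generator $s_p$ occupies a given position, and reassembling the value with the alternating prefix $\bigfplusop X_1 \bigfmalop Y_1 \dotsb \bigfplusop X_k \bigfmalop Y_k$ applied to the guarded constants $\phi_p \triangleright s_p$. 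The ``main obstacle'' you flag is resolved exactly as you suggest --- the definition of $\mathsf{FPSPACE}[\mathcal{S}]$ already fixes the alternation depth $k$ of the output term --- which is precisely what the paper exploits.
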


\begin{Cor} For ordered structures in a finite vocabulary $\tau=\{R_1, \dots, R_j\}$, we have that:
\begin{itemize}
\item[(1)]  $\mathrm{wPFP}[\mathbb{B}] + \{\prod X, \sum X\}$   \emph{captures} $\mathsf{PSPACE}$ (folklore, cf.~\cite{Richerby}).
\item[(2)]  $\mathrm{wPFP}[\mathbb{N}] + \{\prod X, \sum X\}$  \emph{captures} $\mathsf{FPSPACE}$ (originally proved in~\cite{Arenas}).
\end{itemize}
\end{Cor}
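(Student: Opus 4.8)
The plan is to establish the two capture directions of Definition~\ref{ch} for $\mathrm{wPFP}[\mathcal{S}] + \{\prod X, \sum X\}$ versus $\mathsf{FPSPACE}[\mathcal{S}]$, following the same two-sided strategy used for the weighted Fagin and Immerman--Vardi theorems, but now tracking the polynomial-space resource rather than polynomial time. The structural backbone should be the classical result (cited here via~\cite{Richerby,Vianu,Vardi}) that second-order partial-fixed-point logic captures $\mathsf{PSPACE}$ over ordered structures; the weighted argument will lift this by maintaining, as in the earlier theorems, a one-to-one correspondence between logical ``witnessing'' data and deterministic machine computations, incorporating semiring weights as term-constants rather than as truth values.

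\medskip

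\noindent
\textbf{Direction (1): logic into the class.} First I would show that for every $\mathrm{wPFP}[\mathcal{S}] + \{\prod X, \sum X\}$-sentence $\phi$ there is a $P \in \mathsf{FPSPACE}[\mathcal{S}]$ with $P(\mathrm{enc}(\mathfrak{A})) = \|\phi\|(\mathfrak{A})$ for all $\mathfrak{A} \in \str{\tau}_<$. The plan is induction on $\phi$, designing a deterministic polynomial-space machine that, on input $\mathrm{enc}(\mathfrak{A})$, outputs a term of $T(G)$ whose value in $\mathcal{S}$ is $\ser{\phi}(\mathfrak{A})$. For the weighted Boolean atoms and the $\fplus, \fmal$ connectives this is routine. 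For the first-order quantifiers $\bigfplusop x$, $\bigfmalop x$ one iterates over the (ordered) universe in place, so polynomial space suffices. The genuinely new clauses are the $\mathbf{pfp}$ operator and the second-order quantifiers $\prod X, \sum X$: here a relation $R \subseteq A^k$ or a second-order witness $I \subseteq A^{\mathrm{ar}(X)}$ occupies only polynomially many bits, so one may cycle through all such subsets in lexicographic order (mirroring the semantics' enumeration $I_1^n, \dots, I_{l_n}^n$) reusing the same tape region, accumulating the product or sum term. The $\mathbf{pfp}$ stages $R^0, R^1, \dots$ are each polynomially large and the fixed point (or cycle detection) is reached within exponentially many stages, which a polynomial-space counter can track; the key point is that the emitted \emph{term} has its semiring elements counted with length $1$, so no length blow-up beyond $\mathsf{FPSPACE}$ (as opposed to $\mathsf{FPSPACE}_{poly}$) occurs.

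\medskip

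\noindent
\textbf{Direction (2): the class into the logic.} Conversely, given $P \in \mathsf{FPSPACE}[\mathcal{S}]$ computed by a deterministic polynomial-space machine outputting a term $t_w \in T(G)$ with value $\sigma(w)$, I would build a sentence $\phi$ of $\mathrm{wPFP}[\mathcal{S}] + \{\prod X, \sum X\}$ with $\|\phi\|(\mathfrak{A}) = P(\mathrm{enc}(\mathfrak{A}))$. The strategy is to invoke the classical $\mathsf{PSPACE}$-to-$\mathrm{PFP}$ simulation to define, in the Boolean part of the logic, the configuration graph of the machine and its reachability via the partial-fixed-point operator over polynomially many bits of tape, coded by tuples of first-order variables under the order. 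On top of this Boolean skeleton, one reads off the semiring weight: each output term $\sum_{i}\prod_{j} s_{ij}$ is reconstructed by a $\sum X$ ranging over the (polynomially codable) indices of summands and a nested weighted product extracting the constants $s_{ij} \in G$ via the already-defined Boolean predicates. Because the machine is deterministic, this correspondence is genuinely functional, so no spurious summands arise and the semantics match exactly.

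\medskip

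\noindent
\textbf{The hard part} will be the faithful encoding, inside the weighted second-order quantifiers $\prod X, \sum X$ and the $\mathbf{pfp}$ operator, of the machine's output term \emph{as a term} rather than merely its value --- that is, ensuring that the lexicographic enumeration built into the semantics of $\prod X, \sum X$ lines up precisely with the order in which the polynomial-space machine produces the factors and summands, so that in a non-commutative $\mathcal{S}$ the product is formed in the correct sequence. This is the analogue of the ``order is used so that weighted universal quantification reflects the weights of the computation sequences'' step flagged for Theorem~\ref{thm:wfagin}, but here it must interact with the iterated $\mathbf{pfp}$ stages. I expect to resolve it exactly as in the Fagin and Immerman--Vardi cases: exploit the linear order on $\mathfrak{A}$ to index both the stages of the fixed-point iteration and the positions in the emitted term, establishing the required bijection between satisfying second-order assignments and the machine's deterministic computation, after which distributivity of $\mathcal{S}$ guarantees that the summed, producted term-value equals $\sigma(w)$.
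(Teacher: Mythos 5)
Your proposal is essentially a re-derivation of the general Theorem~\ref{space} for arbitrary $\mathcal{S}$, and at that level it follows the same route as the paper: direction (1) by closure of $\mathsf{FPSPACE}[\mathcal{S}]$ under the weighted constructs (with the key observation that exponential sums and products can be accumulated because an exponential counter fits in polynomial space), and direction (2) by using the classical $\mathrm{PFP}$-captures-$\mathsf{PSPACE}$ result of \cite{Vianu,Vardi} as a black box to define, for each constant $s_p$ in the finite generating set, a Boolean formula $\phi_p$ identifying the positions of the output term carrying $s_p$, and then wrapping these in alternating second-order weighted quantifiers indexed by subsets of $A^l$ (which is exactly why $\prod X, \sum X$ are needed here but not for $\mathsf{FPSPACE}_{poly}$). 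However, the paper does not prove the corollary this way: it derives it \emph{immediately} by instantiating the already-established Theorem~\ref{space} at $\mathcal{S}=\mathbb{B}$ and $\mathcal{S}=\mathbb{N}$, combined with the identifications $\mathsf{FPSPACE}[\mathbb{B}]=\mathsf{PSPACE}$ and $\mathsf{FPSPACE}[\mathbb{N}]=\mathsf{FPSPACE}$ recorded in the Examples. Two small points you should still supply to make your argument a complete proof of the corollary as stated. First, the corollary speaks of the classical classes $\mathsf{PSPACE}$ and $\mathsf{FPSPACE}$, so you must explicitly argue that these coincide with $\mathsf{FPSPACE}[\mathbb{B}]$ and $\mathsf{FPSPACE}[\mathbb{N}]$ respectively (for $\mathbb{B}$ this amounts to observing that a deterministic polynomial-space machine outputting a term over $\{0,1\}$ that evaluates in the Boolean semiring is interdefinable with a $\mathsf{PSPACE}$ acceptor; for $\mathbb{N}$ one appeals to Ladner's class as in \cite{Ladner}); your sketch never makes this link. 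Second, the definition of $\mathsf{FPSPACE}[\mathcal{S}]$ inherits from $\mathsf{FP}[\mathcal{S}]$ a term of the alternating shape $\sum^{m_1}\prod^{n_1}\dotsb\sum^{m_k}\prod^{n_k}$ with a fixed number $k$ of alternations, and the formula must mirror this with $k$ alternating blocks $\bigoplus X_1 \bigotimes Y_1 \dotsb \bigoplus X_k \bigotimes Y_k$; your single sum-of-products layer is a harmless simplification but should be stated in the general form. Your concern about non-commutativity is handled exactly as in the paper, by the lexicographic enumeration of subsets built into the semantics of $\bigotimes X$.
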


\begin{Thm}\label{spacepoly}
The logic $\mathrm{wPFP}[\mathcal{S}]$ (with weights in a semiring $\mathcal{S}$) \emph{captures} $\mathsf{FPSPACE}_{poly}[\mathcal{S}]$ over ordered structures in the vocabulary $\tau=\{R_1, \dots, R_j\}$. 
\end{Thm}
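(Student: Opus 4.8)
The plan is to verify the two conditions of Definition~\ref{ch} separately, in both directions using as a black box the Abiteboul--Vianu--Vardi theorem~\cite{Vianu,Vardi} that (Boolean) $\mathrm{PFP}$ captures $\mathsf{PSPACE}$ over ordered structures, and treating the statement as the ``first-order weighted quantifier'' analogue of Theorem~\ref{space}. The guiding observation is a width count: a first-order weighted sum $\bigfplusop x$ or product $\bigfmalop x$ ranges over the $n$-element universe, so a $\mathrm{wPFP}[\mathcal{S}]$-formula whose weighted operators nest to a fixed depth $d$ builds a sum-of-products word of width at most $n^{d}$, i.e.\ of \emph{polynomial} length; by contrast, a second-order weighted quantifier $\sum X$ or $\prod X$ would range over the $2^{n^{r}}$ subsets of $A^{r}$ and hence produce exponential-length words. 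Matching polynomial output length with purely first-order weighted quantifiers is exactly what separates this theorem from Theorem~\ref{space}, and it is the reason the second-order quantifiers can here be dropped.

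For condition~(1), given $\phi \in \mathrm{wPFP}[\mathcal{S}]$ I would build a deterministic polynomial-space machine that on input $\text{enc}(\mfa)$ outputs a word of $T(G)$, where $G$ collects the semiring constants occurring in $\phi$, whose value is $\|\phi\|(\mfa)$. The machine evaluates $\phi$ by recursion on its syntax: each Boolean $\mathrm{PFP}$-subformula is decided in polynomial space by Abiteboul--Vianu--Vardi and contributes the symbol $\sone$ or $\szero$, while the weighted connectives $\fplus,\fmal$ and the weighted quantifiers $\bigfplusop x,\bigfmalop x$ are realized by emitting the corresponding $\splus$/$\smal$ symbols and looping over the polynomially many values of the bound variable (the order on $A$ being respected so that factor order is correct for noncommutative $\mathcal{S}$). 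Since the alternation depth of the emitted word equals the fixed weighted-operator nesting depth of $\phi$ and each level has width at most $n$, the output has polynomial length and the required form, placing $P$ in $\mathsf{FPSPACE}_{poly}[\mathcal{S}]$.

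For condition~(2), given $P \in \mathsf{FPSPACE}_{poly}[\mathcal{S}]$ witnessed by a deterministic polynomial-space machine $M$ that on $\text{enc}(\mfa)$ produces a polynomial-length word $\sum_{i}\prod_{j} s_{ij}$ of value $P(\text{enc}(\mfa))$, I would index summands and the factors within each summand by tuples $\overline{i},\overline{j}\in A^{c}$ for a suitable constant $c$ determined by the length bound, reading off positions in an order-compatible way via the linear order on $A$. Because $M$ runs in polynomial space, the predicates ``$\overline{i}$ indexes a valid summand'', ``$(\overline{i},\overline{j})$ indexes a valid factor of summand $\overline{i}$'', and ``the constant at position $(\overline{i},\overline{j})$ is $g$'' (for each $g\in G$) are $\mathsf{PSPACE}$-decidable, hence definable by Boolean $\mathrm{PFP}$-formulas $\text{valid}_\Sigma(\overline{i})$, $\text{valid}_\Pi(\overline{i},\overline{j})$, $\beta_g(\overline{i},\overline{j})$. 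The desired formula is then
\[
\phi \;=\; \bigfplusop \overline{i}.\,\Bigl(\text{valid}_\Sigma(\overline{i}) \fmal \bigfmalop \overline{j}.\,\bigl((\text{valid}_\Pi(\overline{i},\overline{j}) \fmal \bigfplus_{g \in G}(\beta_g(\overline{i},\overline{j}) \fmal g)) \fplus \lnot\text{valid}_\Pi(\overline{i},\overline{j})\bigr)\Bigr),
\]
where the $\szero$/$\sone$ padding forces invalid factor positions to contribute $\sone$ and invalid summands to contribute $\szero$, so that $\|\phi\|(\mfa)$ collapses to $\sum_{i}\prod_{j}s_{ij}=P(\text{enc}(\mfa))$; a general bounded-alternation output word is handled by iterating this padding construction through its constantly many alternation levels.

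The step I expect to be the main obstacle is the bookkeeping in condition~(2): fixing the arity $c$ from the polynomial length bound, setting up an order-compatible bijection between positions of $M$'s output word and index tuples of $A$ (so that the product respects left-to-right factor order even when $\mathcal{S}$ is noncommutative), and verifying that the padding convention faithfully reproduces a word whose summands may have differing numbers of factors. Crucially, none of this requires the second-order weighted quantifiers of Theorem~\ref{space} — and that is the whole point, since the polynomial-length restriction is precisely what lets first-order weighted sums and products, ranging only over the universe, suffice in their place.
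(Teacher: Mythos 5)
Your proposal is correct and follows essentially the same route as the paper's proof: condition~(1) via $\mathsf{PSPACE}$-evaluation of Boolean $\mathrm{PFP}$-subformulas and closure of $\mathsf{FPSPACE}_{poly}[\mathcal{S}]$ under polynomial sums and products, and condition~(2) by indexing positions of the polynomial-length output term with tuples from $A^{l}$, invoking Abiteboul--Vianu--Vardi to define the $\mathsf{PSPACE}$-decidable position/constant predicates in Boolean $\mathrm{PFP}$, and wrapping them in alternating first-order $\bigfplusop\overline{x}_i\bigfmalop\overline{y}_i$ prefixes with the $(\beta\fmal\varphi)\fplus\lnot\beta$ padding (the paper's $\triangleright$ construction) to reproduce the word's value. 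Your explicit validity predicates for out-of-range summand and factor indices are, if anything, a slightly more careful rendering of the same bookkeeping the paper leaves implicit.
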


\begin{Cor} For ordered structures in a finite vocabulary $\tau=\{R_1, \dots, R_j\}$, we have that:
\begin{itemize}
\item[(1)]  $\mathrm{wPFP}[\mathbb{B}]$   \emph{captures} $\mathsf{PSPACE}$ (originally proved in~\cite{Vianu, Vardi}).
\item[(2)]  $\mathrm{wPFP}[\mathbb{N}]$  \emph{captures} $\mathsf{FPSPACE}_{poly}$ (originally proved in~\cite{Arenas}).
\end{itemize}
\end{Cor}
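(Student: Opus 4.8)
The plan is to obtain both items by instantiating the general Theorem~\ref{spacepoly} at the two semirings $\mathbb{B}$ and $\mathbb{N}$, and then passing through the identifications $\mathsf{FPSPACE}_{poly}[\mathbb{B}]=\mathsf{PSPACE}$ and $\mathsf{FPSPACE}_{poly}[\mathbb{N}]=\mathsf{FPSPACE}_{poly}$ recorded in the preceding Examples. The only structural remark needed is that the relation ``\emph{captures}'' of Definition~\ref{ch} refers to a weighted complexity class purely as a collection of series; consequently, if a logic $\mathrm{L}[\mathcal{S}]$ captures $\mathcal{C}$ and $\mathcal{C}=\mathcal{C}'$ as collections of series, then $\mathrm{L}[\mathcal{S}]$ captures $\mathcal{C}'$ as well. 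Each item then follows by chaining Theorem~\ref{spacepoly} with one such equality.

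For item~(1) I set $\mathcal{S}=\mathbb{B}$, so that Theorem~\ref{spacepoly} yields that $\mathrm{wPFP}[\mathbb{B}]$ captures $\mathsf{FPSPACE}_{poly}[\mathbb{B}]$. Over the Boolean semiring the weighted semantics of every formula is $\{0,1\}$-valued and collapses to ordinary two-valued satisfaction: the semiring addition together with its sum-quantifier realizes disjunction and existential quantification, the semiring multiplication together with its product-quantifier realizes conjunction and universal quantification, and the $\mathbf{pfp}$-operator is inherited verbatim from the classical definition. Hence a $\mathrm{wPFP}[\mathbb{B}]$-series is exactly the characteristic function of a class definable in classical partial fixed-point logic. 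Reading a $\{0,1\}$-valued series as the language of inputs it sends to $\sone$, the Example identifying $\mathsf{FPSPACE}_{poly}[\mathbb{B}]$ with $\mathsf{PSPACE}$ gives $\mathsf{FPSPACE}_{poly}[\mathbb{B}]=\mathsf{PSPACE}$, and the transitivity remark then produces item~(1), thereby recovering the Abiteboul--Vianu--Vardi theorem~\cite{Vianu, Vardi}.

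For item~(2) I set $\mathcal{S}=\mathbb{N}$. Theorem~\ref{spacepoly} gives that $\mathrm{wPFP}[\mathbb{N}]$ captures $\mathsf{FPSPACE}_{poly}[\mathbb{N}]$, and the Example identifying $\mathsf{FPSPACE}_{poly}[\mathbb{N}]$ with $\mathsf{FPSPACE}_{poly}$, together with the same transitivity remark, yields item~(2), as originally established in~\cite{Arenas}.

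Since the derivation is a direct substitution into Theorem~\ref{spacepoly}, no genuine obstacle arises beyond bookkeeping. The single point deserving attention is the Boolean collapse and the attendant convention that identifies a $\{0,1\}$-valued series with a decision problem, which is precisely what renders the equality $\mathsf{FPSPACE}_{poly}[\mathbb{B}]=\mathsf{PSPACE}$ meaningful; once this and its $\mathbb{N}$-counterpart are granted from the Examples, both items are immediate consequences of Theorem~\ref{spacepoly}.
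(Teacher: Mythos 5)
Your proposal is correct and matches the paper's intended derivation exactly: the corollary is obtained by instantiating Theorem~\ref{spacepoly} at $\mathcal{S}=\mathbb{B}$ and $\mathcal{S}=\mathbb{N}$ and then applying the identifications $\mathsf{FPSPACE}_{poly}[\mathbb{B}]=\mathsf{PSPACE}$ and $\mathsf{FPSPACE}_{poly}[\mathbb{N}]=\mathsf{FPSPACE}_{poly}$ from the preceding Examples, with the capture relation transported along these equalities of series classes. Your additional remarks on the Boolean collapse of the weighted semantics and the identification of $\{0,1\}$-valued series with languages are accurate and consistent with the paper's conventions.
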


\begin{Thm}\label{logi}
The logic $\mathrm{wDTC}[\mathcal{S}]$ (with weights in a semiring $\mathcal{S}$) \emph{captures} $\mathsf{FPLOG}[\mathcal{S}]$ over ordered structures in the vocabulary $\tau=\{R_1, \dots, R_j\}$. 
\end{Thm}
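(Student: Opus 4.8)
The plan is to follow the same two-directional template as in the proof of Theorem~\ref{I-V}, replacing throughout polynomial time by logarithmic space and least fixed points by deterministic transitive closure. The descriptive-complexity backbone we invoke is no longer the Immerman--Vardi theorem ($\mathrm{LFP}$ captures $\mathsf{P}$ on ordered structures) but Immerman's theorem that $\mathrm{DTC}$ captures $\mathsf{LOGSPACE}$ on ordered structures; everything else is parallel, with the weighted layer of $\mathrm{wDTC}[\mathcal{S}]$ sitting on top of a Boolean $\mathrm{DTC}$ core exactly as $\mathrm{wFO}$ sits on top of $\mathrm{FO}$. Since the $\mathbf{dtc}$ operator is a Boolean construct, it contributes no weighted alternations: the nesting depth of the weighted connectives $\fplus,\fmal$ and quantifiers $\bigfplusop,\bigfmalop$ of a fixed formula is a constant $k$, which is precisely the alternation bound in the definition of $\mathsf{FPLOG}[\mathcal{S}]$.

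For the inclusion of $\mathrm{wDTC}[\mathcal{S}]$ in $\mathsf{FPLOG}[\mathcal{S}]$, I would argue by induction on the formula $\varphi$, constructing a deterministic logarithmic-space machine that, on input $\text{enc}(\mfa)$, writes a term of the required bounded-alternation shape $\sum^{m_1}_{i_1=1}\prod^{n_1}_{j_1=1}\dotsb$ whose value in $\mathcal{S}$ is $\ser{\varphi}(\mfa)$. A Boolean base formula $\beta$ of the $\mathrm{DTC}$ core is evaluated to $\sone$ or $\szero$ in logarithmic space by Immerman's theorem (and composing constantly many logarithmic-space evaluations stays in $\mathsf{LOGSPACE}$); a constant $s$ is written out directly. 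The weighted quantifiers $\bigfplusop x.\psi$ and $\bigfmalop x.\psi$ become nested loops over the (tuples of) elements of the ordered universe, each loop counter using $O(\log n)$ bits, and the connectives $\fplus,\fmal$ insert the symbols $+$ and $\smal$ with the appropriate parentheses. As the alternation depth is the constant $k$ and each level ranges over at most $n$ choices, the emitted term has polynomial length, and since the output tape is write-only it does not count against the space bound.

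For the converse, let $\sigma \in \mathsf{FPLOG}[\mathcal{S}]$ be realized by a deterministic logarithmic-space machine $\mathcal{M}$ which on $\text{enc}(\mfa)$ emits a bounded-alternation term whose value is $\sigma(\text{enc}(\mfa))$. The configuration graph of $\mathcal{M}$ has polynomially many nodes, indexable by tuples of universe elements via the linear order, and its deterministic successor relation is $\mathrm{FO}$-definable; hence by Immerman's theorem the function computing, for a given leaf index $\tuple{i_1,j_1,\dots,i_k,j_k}$, the semiring symbol $\mathcal{M}$ places there (as well as the range bounds $m_\ell,n_\ell$) is $\mathrm{DTC}$-definable. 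I would then translate the term literally into a formula $\bigfplusop \bar x_1.\bigfmalop \bar y_1.\dotsb\bigfplusop \bar x_k.\bigfmalop \bar y_k.\,\theta$, where the tuples $\bar x_\ell,\bar y_\ell$ encode the summation and product indices and $\theta$ is a bounded weighted expression that, steered by $\mathrm{DTC}$-definable guards for the bounds, equals the leaf value inside every range and the neutral element of the enclosing quantifier ($\sone$ for $\bigfmalop$, $\szero$ for $\bigfplusop$) outside it; the leaf value itself is the finite selector $\bigfplusop_{g\in G}(\chi_g \fmal g)$ with $\chi_g$ the $\mathrm{DTC}$-formula true exactly when $\mathcal{M}$ places the generator $g$ at that leaf. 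Distributivity of $\mathcal{S}$ then guarantees that $\ser{\varphi}(\mfa)$ equals the value of $\mathcal{M}$'s output term, namely $\sigma(\text{enc}(\mfa))$.

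The main obstacle is this converse direction, and specifically the bookkeeping that makes the machine's output $\mathrm{DTC}$-definable: one must set up, uniformly in $\mfa$, a $\mathrm{DTC}$-definable bijection between leaf positions of the flat output string and index tuples $\tuple{i_1,j_1,\dots,i_k,j_k}$, recover the nested $\sum$--$\prod$ structure by logarithmic-space parsing (bracket matching of bounded depth), and check that the padding with $\szero$ and $\sone$ reproduces the intended finite sums and products rather than altering them. The logarithmic-space constraint is what forces reliance on deterministic transitive closure throughout---in contrast to the polynomial-time setting of Theorem~\ref{I-V}, where least fixed points are available---so the crux is verifying that every auxiliary computation invoked in both directions genuinely lives in $\mathsf{LOGSPACE}$, equivalently is expressible in $\mathrm{DTC}$.
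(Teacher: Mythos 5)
Your proposal is correct and follows essentially the same route as the paper's proof: both directions mirror the template of Theorem~\ref{I-V}, with Immerman's $\mathrm{DTC}=\mathsf{DLOGSPACE}$ theorem as the backbone, closure of $\mathsf{FPLOG}[\mathcal{S}]$ under polynomial sums and products via logarithmic-space counters for one inclusion, and for the converse a translation of the machine's polynomially long bounded-alternation output term into nested $\bigfplusop\overline{x}_\ell\,\bigfmalop\overline{y}_\ell$ quantifiers over index tuples guarded by $\mathrm{DTC}$-definable selectors $\phi_p$ for the generators $s_p\in G$. Your treatment of the padding outside the actual ranges $m_\ell,n_\ell$ is in fact slightly more explicit than the paper's, which handles the leaf selection with the product $\bigfmal_{p}(\phi_p\triangleright s_p)$ and leaves the range bookkeeping implicit.
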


\begin{Cor} For ordered structures in a finite vocabulary $\tau=\{R_1, \dots, R_j\}$, we have that:
\begin{itemize}
\item[(1)]  $\mathrm{wDTC}[\mathbb{B}]$   \emph{captures} $\mathsf{DLOGSPACE}$ (originally proved in~\cite{Immerman1}).
\item[(2)]  $\mathrm{wDTC}[\mathbb{N}]$  \emph{captures} $\mathsf{FPLOG}$.
\end{itemize}
\end{Cor}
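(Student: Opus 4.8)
The plan is to obtain both items directly from Theorem~\ref{logi} by instantiating the semiring and then matching the resulting weighted class with its classical counterpart; no part of the proof of Theorem~\ref{logi} is reproved. Putting $\mathcal{S}=\mathbb{B}$ in Theorem~\ref{logi} yields that $\mathrm{wDTC}[\mathbb{B}]$ captures $\mathsf{FPLOG}[\mathbb{B}]$, and putting $\mathcal{S}=\mathbb{N}$ yields that $\mathrm{wDTC}[\mathbb{N}]$ captures $\mathsf{FPLOG}[\mathbb{N}]$. Since capturing in the sense of Definition~\ref{ch} is a statement about collections of series, it then suffices to establish the two class identities $\mathsf{FPLOG}[\mathbb{B}]=\mathsf{DLOGSPACE}$ and $\mathsf{FPLOG}[\mathbb{N}]=\mathsf{FPLOG}$, where $\mathsf{DLOGSPACE}$ is read as the collection of characteristic series of its languages. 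These identities are precisely the content of the examples preceding the theorem, so the remaining work is to justify them, the crux being that evaluating the term printed by the machine stays within logarithmic space.

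For item~(1) I would identify a $\{0,1\}$-valued series $\sigma$ with the language $L_\sigma=\{w : \sigma(w)=1\}$. For $\mathsf{DLOGSPACE}\subseteq\mathsf{FPLOG}[\mathbb{B}]$, a logspace decider for a language $L$ computes $\sigma(w)\in\{0,1\}$ outright and prints it as the trivial one-summand, one-factor term, which already has the required shape. For the converse, recall that over $\mathbb{B}$ both semiring operations are the lattice operations $\min$ and $\max$, so an output term of the bounded-alternation form $\sum^{m_1}_{i_1}\prod^{n_1}_{j_1}\dotsb$ is a monotone Boolean formula of depth $2k$ over the $\{0,1\}$-constants. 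Evaluating such a bounded-depth formula is in logspace, and since deterministic logspace is closed under composition, recomputing the output symbols on demand while evaluating produces $\sigma(w)$ in logspace; hence $L_\sigma\in\mathsf{DLOGSPACE}$.

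For item~(2), the inclusion $\mathsf{FPLOG}[\mathbb{N}]\subseteq\mathsf{FPLOG}$ again reduces to term evaluation. The machine prints an arithmetic term over $\mathbb{N}$ of bounded alternation depth whose constants come from a fixed finite set $G$, so a short magnitude estimate shows every intermediate value has polynomially many bits. Evaluating the term is then a bounded number of alternating iterated-sum and iterated-product stages, and since iterated integer addition and multiplication lie in $\mathsf{TC}^0\subseteq\mathsf{DLOGSPACE}$, composing the constantly many stages keeps the whole evaluation in logspace, giving $\sigma\in\mathsf{FPLOG}$; here the alternation bound built into the definition of $\mathsf{FP}[\mathcal{S}]$ is exactly what makes this a bounded composition. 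For the reverse inclusion, given $f\in\mathsf{FPLOG}$ producing $f(w)=\sum_i b_i(w)2^i$ in binary, I would take $G=\{0,1,2\}$ and have a logspace machine print, for each bit position $i$ up to the polynomial bound $N$, a length-$N$ product equal to $2^i$ when $b_i(w)=1$ (padding the remaining factors with the multiplicative unit $1$) and equal to $0$ otherwise (by inserting a zero factor), so that the whole output is a genuine $k=1$ term $\sum^{N}_{i=1}\prod^{N}_{j=1}c_{ij}$ with value $f(w)$; each $b_i(w)$ is recoverable in logspace by re-running $f$, and the total output stays polynomial.

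The step I expect to be the main obstacle is the forward direction of the $\mathbb{N}$-identification, namely guaranteeing that evaluating the printed arithmetic term remains in logarithmic space. This is the only place where a nontrivial complexity-theoretic ingredient, rather than a direct simulation, is invoked, and it relies jointly on the bounded alternation depth and on the logspace-computability of iterated integer addition and multiplication. The Boolean case of item~(1) is then the easy specialization in which iterated $\max$ and $\min$ replace iterated arithmetic and the evaluation is manifestly in logspace.
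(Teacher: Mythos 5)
Your proposal follows exactly the paper's route: the corollary is obtained by instantiating Theorem~\ref{logi} at $\mathcal{S}=\mathbb{B}$ and $\mathcal{S}=\mathbb{N}$ and then invoking the class identities $\mathsf{FPLOG}[\mathbb{B}]=\mathsf{DLOGSPACE}$ and $\mathsf{FPLOG}[\mathbb{N}]=\mathsf{FPLOG}$, which the paper asserts in its Examples without proof. Your additional justification of those identities (trivial one-term output in the forward direction; logspace evaluation of the printed bounded-alternation term via closure under composition and iterated addition/multiplication in $\mathsf{TC}^0\subseteq\mathsf{DLOGSPACE}$, with the bit-by-bit $k=1$ encoding for the converse over $\mathbb{N}$) is correct and merely fills in detail the paper leaves implicit.
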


To end the  present section, we  address the general and interesting open problem suggested in~\cite{Eiter} regarding a Fagin theorem that characterizes the class $\text{NP}[\mathcal{R}]$ from~\cite[Def.~14]{Eiter}.
We begin by observing that for the machine model in~\cite[Def.~12]{Eiter}, Fagin's theorem will fail if the logic considered is $\mathrm{wESO}$. This is essentially due to the fact that semiring Turing machines allow for an infinite number of transitions.  However, such a large set of transitions, is only actually needed when there are infinitely many semiring values in the input words.

\begin{Pro}\label{countx} Let $\mathcal{R}$ be a commutative semiring. There is a series $P\in \text{\emph{NP}}[\mathcal{R}]$ such that for no $\varphi \in \mathrm{wESO}$, $|| \varphi|| = P$.

\end{Pro}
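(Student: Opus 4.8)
The plan is to separate $\text{NP}[\mathcal{R}]$ from the $\mathrm{wESO}$-definable series by a \emph{finiteness-of-weights} argument: every fixed $\mathrm{wESO}$-formula can only ever produce values lying inside one finitely generated subsemiring, whereas a semiring Turing machine in the sense of~\cite[Def.~12]{Eiter}, having an unbounded supply of transitions, can produce values escaping every such subsemiring.

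First I would record the key lemma. A $\mathrm{wESO}$-formula $\varphi$ contains only finitely many semiring constants, say $S_\varphi=\{s_1,\dots,s_m\}\subseteq S$. An induction on the structure of $\varphi$, reading off the weighted semantics clause by clause (each clause uses only $\splus$, $\smal$, the constants $s_i$, and the Boolean values $\szero,\sone$ coming from the first-order atoms), shows that $\|\varphi\|(\mathfrak{A})\in\langle S_\varphi\rangle$ for \emph{every} finite ordered $\tau$-structure $\mathfrak{A}$, where $\langle S_\varphi\rangle$ is the subsemiring generated by $S_\varphi$. Hence the whole range $\{\|\varphi\|(\mathfrak{A}):\mathfrak{A}\in\str{\tau}_<\}$ is contained in a \emph{single finitely generated} subsemiring of $\mathcal{R}$. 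Equivalently, by Theorem~\ref{thm:wfagin} one may phrase this directly for $\mathsf{NP}[\mathcal{R}]$: the weight function $\nu$ of a (this paper's) weighted Turing machine has finite domain $\Delta$, so its range, and therefore the range of its behaviour, lies in the subsemiring generated by those finitely many weights.

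Next I would fix the witnessing data. Choosing $\mathcal{R}$ to be \emph{not finitely generated} — exactly the situation in which the infinite transition set of~\cite[Def.~12]{Eiter} has bite, and which covers e.g.\ $\mathbb{Q}$, the real arctic and tropical semirings, or $\mathcal{F}_*$ — I would pick a sequence $(r_n)_{n\in\mathbb{N}}$ in $S$ with $r_{n+1}\notin\langle r_0,\dots,r_n\rangle$ for every $n$, so that no finitely generated subsemiring contains all of the $r_n$. I then build a semiring Turing machine $\mathcal{M}$ over $\mathcal{R}$ whose behaviour on encodings satisfies $\|\mathcal{M}\|(\text{enc}(\mathfrak{A}))=r_{|A|}$: the machine first determines $n=|A|$ from the input in deterministic polynomial time, and then fires a \emph{single} transition of weight $r_n$ selected according to the computed value $n$. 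Such a machine is legal precisely because Eiter's model allows infinitely many transitions (one carrying each weight $r_n$) with no computability constraint on the weighting; the resulting $P:=\|\mathcal{M}\|$ therefore lies in $\text{NP}[\mathcal{R}]$. Finally, suppose towards a contradiction that $\|\varphi\|=P$ for some $\varphi\in\mathrm{wESO}$, i.e.\ $\|\varphi\|(\mathfrak{A})=r_{|A|}$ for all $\mathfrak{A}$. Taking for each $n$ a structure $\mathfrak{A}_n$ with $|A_n|=n$ gives $r_n=\|\varphi\|(\mathfrak{A}_n)\in\langle S_\varphi\rangle$ for every $n$, so all the $r_n$ lie in the single finitely generated subsemiring $\langle S_\varphi\rangle$, contradicting the choice of $(r_n)$. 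Hence no $\varphi$ realizes $P$.

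I expect the \textbf{main obstacle} to be the construction step: faithfully realizing the weight-selection inside Eiter's formal machine model~\cite[Def.~12]{Eiter} and verifying that the object produced is a bona fide polynomial-time semiring Turing machine, since this is exactly the point at which the \emph{infinite} transition set (absent from the weighted Turing machines of this paper, whose $\nu$ has finite domain) is indispensable. A secondary, conceptual point worth flagging is that the hypothesis genuinely used is that $\mathcal{R}$ is \emph{not finitely generated}: for finite, and more generally finitely generated, commutative semirings the two expressible ranges coincide and a separation of this type is unavailable, so the witness must be sought among infinitely generated $\mathcal{R}$. When $S$ is countably infinite one even obtains a soft separation for free, since there are only countably many $\mathrm{wESO}$-formulas but, with an arbitrary weighting over infinitely many transitions, uncountably many series $\text{enc}(\mathfrak{A})\mapsto f(|A|)$ are realizable in $\text{NP}[\mathcal{R}]$.
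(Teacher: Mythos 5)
Your finitely-generated-range lemma is exactly the engine of the paper's proof: a $\mathrm{wESO}$-formula mentions only finitely many constants, so its semantics takes values in a finitely generated subsemiring (equivalently, via Theorem~\ref{thm:wfagin}, a weighted Turing machine has only finitely many transition weights), and you correctly note that the hypothesis doing the work is that $\mathcal{R}$ is not finitely generated. The gap is in the witness. You propose a semiring Turing machine over \emph{binary} inputs whose behaviour is $\text{enc}(\mathfrak{A}) \mapsto r_{|A|}$, with the machine ``firing a single transition of weight $r_n$ selected according to the computed value $n$''. This is not realizable in the model of~\cite[Def.~12]{Eiter}. A transition is determined by the current state and the symbol under the head; the state set is finite and, on an input containing no semiring values, the only symbols ever appearing on the tape come from the finite working alphabet (the genericity restrictions prevent the machine from writing semiring values it has not read, and force it to treat uniformly any it does read). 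Hence on binary inputs only finitely many transitions can ever fire, the behaviour again lands in a finitely generated subsemiring, and your $P$ does not exist. This is precisely the paper's own subsequent observation that over binary encodings the infinite transition set has no bite and $\text{NP}[\mathcal{R}]$ collapses to $\mathsf{NP}[\mathcal{R}]$ (the second Proposition of the appendix); your closing cardinality remark about uncountably many realizable series of the form $\text{enc}(\mathfrak{A}) \mapsto f(|A|)$ fails for the same reason.

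The paper's witness avoids this by putting the semiring values into the \emph{input alphabet}: it takes $\mathcal{M} = (S, \emptyset, \{\iota,\lambda\}, \{\sqcup\}, \iota, \sqcup, \delta)$ with $\delta = \{ (\iota, s, \lambda, s, 1, s) \mid s \in S\}$, so the machine reads a semiring value $s$ from the input word and emits it as the weight of one generic transition. This respects Eiter's restrictions (the behaviour is uniform in $s$) while producing a series whose range is all of $S$, hence not contained in any finitely generated subsemiring when $S$ is not finitely generated. To salvage your version, the input words must carry the values $r_n$ themselves rather than having the machine synthesize them from $|A|$.
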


Thus one might reasonably further ask what kind of logic would capture $\text{NP}[\mathcal{R}] $. Observe that an obvious challenge here is that in the proof of Fagin's theorem at some point we need to encode in the logic by means of a sentence involving a long (but finite) disjunction what the legal transitions of our machine are. Consequently, in the presence of infinitely many transitions, it is not clear how to achieve a Fagin-style characterization in a finitary language as before.   

By contrast to the above situation, we might ask a more restricted question if what we are doing  is trying to capture $\text{NP}[\mathcal{R}] $ over the class of all finite ordered structures. Recall that we are considering finite structures to be given via their binary encodings and thus the relevant series in  $\text{NP}[\mathcal{R}] $ are those that take as input merely binary strings. These series are not computed by 
SRTMs that involve infinitely many transitions because the input words do not involve semiring values. So  let us consider now the modification of~\cite[Def.~12]{Eiter} that only allows semiring Turing machines to come with a finite set of transitions. In this case we will easily see that their machine model coincides with ours.

\begin{Pro} Let $\mathcal{R}$ be a commutative semiring and allow only finitely many transitions in a semiring Turing machine. Then $\text{\emph{NP}}[\mathcal{R}] =\mathsf{NP}[\mathcal{R}]$, i.e.\ the NP class in the sense of~\cite{Eiter} coincides with the NP class in our sense. 

\end{Pro}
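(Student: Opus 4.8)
The plan is to prove the two inclusions separately, each being essentially implicit in the comparison of the two machine models made right after our definition of weighted Turing machines. For $\text{NP}[\mathcal{R}]\subseteq\mathsf{NP}[\mathcal{R}]$ I would argue that a finite-transition SRTM in the sense of~\cite[Def.~12]{Eiter} is, once its structural conditions are forgotten, literally a weighted Turing machine: its state set and transition set are finite, and since only finitely many tape symbols can occur among finitely many transitions, the effective working alphabet is finite as well, so the machine matches our definition verbatim. The value it assigns to an input under~\cite[Def.~13]{Eiter} coincides with the behavior $\|\cdot\|$ we use, as we already recorded using distributivity of $\smal$ over $\splus$; and since the time measure $\mathsf{TIME}(\mathcal{M},n)$ is the same in both settings, a polynomial-time finite-transition SRTM yields a polynomial-time weighted Turing machine computing the same series. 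It is worth stressing that restricting to finitely many transitions is exactly what forces this effective alphabet to be finite: without it the two classes would not even be collections of series over a common finite alphabet, which is the loophole exploited in Proposition~\ref{countx}.

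For the converse $\mathsf{NP}[\mathcal{R}]\subseteq\text{NP}[\mathcal{R}]$ I would invoke the inclusion already noted after the definition of $\mathsf{NP}[\mathcal{S}]$, valid for commutative $\mathcal{R}$, and check that it is witnessed by SRTMs with a finite transition set. Starting from a weighted Turing machine, whose transition relation is finite because $Q$ and $\Gamma$ are, the normalization that enforces the conditions of~\cite[Def.~12]{Eiter}---splitting each transition into a bounded number of elementary read/write/weight steps and adding finitely many auxiliary states---introduces only finitely many new transitions and costs only a constant factor in running time. The outcome is a polynomial-time SRTM with finitely many transitions computing the same series, so the series belongs to the finite-transition version of $\text{NP}[\mathcal{R}]$, and the two inclusions together give the asserted equality.

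I expect the only real work to be bookkeeping rather than conceptual, concentrated in the second inclusion: one must verify that the normal form demanded by~\cite[Def.~12]{Eiter} is reachable from an arbitrary weighted Turing machine by a transformation that keeps the transition set finite, preserves the behavior exactly---here distributivity is precisely what lets the per-step weights of the SRTM recombine into the single transition weights of the weighted machine---and stays within polynomial time. The conceptual point, already isolated in the first inclusion, is that the finite-transition hypothesis switches off the infinite-alphabet and infinite-transition features responsible for the strict separation in Proposition~\ref{countx}, so that the equality is genuinely one of classes of series over the same finite alphabet.
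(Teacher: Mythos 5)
Your overall architecture matches the paper's: two inclusions, each established by a direct translation between the two machine models, with distributivity doing the algebraic work. However, there is a genuine gap in your first inclusion. You claim that a finite-transition SRTM, once its structural conditions are forgotten, is \emph{literally} a weighted Turing machine and ``matches our definition verbatim.'' It does not: an SRTM's transition relation $\delta'$ consists of six-tuples $(p,a,q,b,d,s)$, so the \emph{same} underlying five-tuple $(p,a,q,b,d)$ may occur with several distinct weights $s$, whereas a weighted Turing machine has a set $\Delta$ of five-tuples together with a weighting \emph{function} $\nu\colon\Delta\to S$, which assigns exactly one weight to each transition. The translation therefore requires collapsing all weighted copies of a transition into one, setting $\nu(p,a,q,b,d)=\sum_{(p,a,q,b,d,s)\in\delta'} s$ (a finite sum precisely because of the finite-transition hypothesis), and then invoking distributivity to check that summing weights per transition before multiplying along a computation yields the same total as summing over all weighted computations of the SRTM. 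This is exactly the point the paper's proof singles out, and your appeal to the earlier remark that Definition~13 of~\cite{Eiter} ``coincides with the behavior'' does not cover it, since that remark concerns the definition of the machine's value, not the multiplicity of weights per transition.

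A secondary, less serious divergence: you locate ``the only real work'' in the converse inclusion $\mathsf{NP}[\mathcal{R}]\subseteq\text{NP}[\mathcal{R}]$, but that direction is in fact the easy one. The paper simply takes $\mathcal{R}'$ to be the finite set of weights in the image of $\nu$ and turns each weighted transition into a six-tuple; the only adjustment needed is simulating ``stay'' moves by a right move followed by a left move, since SRTMs must always move the head. Your proposed normalization by ``splitting each transition into a bounded number of elementary read/write/weight steps'' is workable but heavier than necessary, and you should make explicit that the added states and transitions preserve the one-to-one correspondence of accepting computations and their weights (the auxiliary steps must carry weight $\sone$, or more precisely the single designated weight, so that products are unchanged).
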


\section{Conclusions and further work}

In this paper, we have established a few central results in weighted descriptive complexity, providing quantitative versions of Fagin's theorem and the Immerman--Vardi's theorem, among other logical characterizations of complexity classes. We also plan to extend our weighted Fagin's theorem to the even larger class of valuation monoids containing all semirings
and supporting average calculations by the theory developed in~\cite{GM} for weighted finite automata over words and weighted $\mathrm{EMSO}$ logic.

Furthermore, in future work, we  aim to characterize further weighted complexity classes. For example,  in the definition of $\mathsf{NP}[\mathcal{S}]$, by changing the requirement about polynomial time to logarithmic space on the size of the input, we can obtain a weighted complexity class that generalizes the classical counting class $\# \mathsf{L}$. The latter has been characterized by means of a logic weighted on the semiring $\mathbb{N}$ in~\cite[Thm.~6.4]{Arenas}. We suspect that this work can be generalized.

\appendix
\section{Appendix}

In this section we include some examples, remarks and proofs that complete the body of the text.

We recall now a standard description of cliques in graphs by second-order logic; we will use this in our subsequent examples for weighted structural properties.
\setcounter{Thm}{10}
\begin{Exm}\label{ex:check_clique}
Let $\tau$ is the signature of a graph, i.e., $\rel{\tau} = \{\edge\}$ with $\edge$ binary. We call a graph $\mfg \in \str{\tau}$ \emph{undirected} if its interpretation of $\edge$ is a symmetric relation on the universe of $\mfg$. For every undirected graph $\mfg \in \str{\tau}$ and a subset $I$ of its universe, we can check whether the nodes from $I$ form a clique in $\mfg$ using the $\mathrm{SO}$-formula
\begin{align*}
\clique(X) \defas \forall x \forall y \Big( \big( Xx \land  Xy \land x \neq y\big) \to \edge(x,y) \Big).
\end{align*}
Here, the formula $x \neq y$ is an abbreviation for $\exists Y (Yy \land \lnot (Yx))$. We have that $\tuple{\mfg, [X \to I]}$ satisfies $\clique(X)$ if and only if $I$ is a clique in $\mfg$.
\end{Exm}

We  give next some examples of how weighted formulas can be interpreted.

\begin{Exm}
If $\mathcal{S} = \B$ is the two-element Boolean semiring, we obtain classical logic.
\end{Exm}

\begin{Exm}
Using the arctic semiring $\arct = \tuple{\R_+ \cup \{-\infty\}, \max, +, -\infty, 0}$, we can describe the size of the largest clique in a graph as follows. We reuse the signature $\tau$ of a graph and the $\mathrm{SO}$-formula $\clique(X)$ from Example~\ref{ex:check_clique} and define a $\mathrm{wSO}$-formula as follows.
\begin{align*}
\varphi \defas \bigfplus X. \Big( \clique(X) \fmal \bigfmal x. \big( 0 \fplus (1 \fmal Xx) \big) \Big)
\end{align*}
Then, for every undirected graph $\mfg \in \str{\tau}$, we have that $\ser{\varphi}(\mfg)$ is the size of the largest clique in $\mfg$.
\end{Exm}

\begin{Exm}
Assume that $\mathcal{S} = \tuple{\Q, +, \cdot, 0, 1}$ is the field of rational numbers and that $\tau$ is the signature from the previous example.
Then, for every fixed $n \in \Np$, we can count the number of $n$-cliques of an undirected graph $\mfg \in \str{\tau}$ using the $\mathrm{wSO}$-formula
\begin{align*}
\varphi_n \defas \frac{1}{n!} \fmal \bigfplusop x_1 \ldots \bigfplusop x_n. \bigwedge_{i \neq j} \Big( (x_i \neq x_j) \land \edge(x_i, x_j) \Big).
\end{align*}
Here, $x_i \neq x_j$ again is an abbreviation for $\exists Y (x_j \in Y \land \lnot (x_i \in Y))$.
\end{Exm}

\begin{Exm}
We consider the \emph{minimum cut} of directed acyclic graphs. For this, we interpret these graphs as \emph{flow networks} in the following way. Every vertex which does not have a predecessor is considered a \emph{source}, every vertex without successors is considered a \emph{drain}, and every edge is assumed to have a capacity of $1$. Let $G = \tuple{V, E}$ be a directed acyclic graph where $V$ is the set of vertices and $E \subseteq V \times V$ the set of edges. A cut $\tuple{S,D}$ of $G$ is a partition of $V$, i.e., $S \cup D = V$ and $S \cap D = \emptyset$, such that all sources of $G$ are in $S$, and all drains of $G$ are in $D$. The \emph{minimum cut} of $G$ is the smallest number $|E \cap (S \times D)|$ such that $\tuple{S, D}$ is a cut of $G$.

We can express the minimum cut of directed acyclic graphs by a weighted formula as follows. We let $\tau$ be the signature from the previous two examples and as our semiring, we choose the tropical semiring  $\trop = \tuple{\R_+ \cup \{+\infty\}, \min, +, +\infty, 0}$. Then, using the abbreviation
\begin{align*}
\cut(X,Y) \defas \forall x. \Big( (Xx \leftrightarrow \lnot (Yx)) \land (\exists y. \edge(y,x) \lor Xx) \land (\exists y. \edge(x,y) \lor Yx) \Big),
\end{align*}
we can express the minimum cut of a directed acyclic graph $\mfg \in \str{\tau}$ using the formula
\begin{align*}
\varphi \defas \bigfplusop X. \bigfplusop Y. \Big( \cut(X, Y) \fmal \bigfmalop x. \bigfmalop y. (1 \fplus \lnot(Xx \land Yy \land \edge(x,y))) \Big).
\end{align*}
\end{Exm}

\begin{Exm}[cf.~\cite{DrosteGastin:07}]\label{ex:countexample}
Let $\mathcal{S} = \tuple{\Nz,+,\cdot,0,1}$ be the semiring of natural numbers and let $\varphi \in \wmso{\tau,\mathcal{S}}$ be a formula which does not contain any constants $s \in \Nz$.
Then, we may understand $\ser{\varphi}(\mfa, \rho)$ as the number of proofs we have that $\tuple{\mfa, \rho}$ satisfies $\varphi$ assuming that we interpret the weighted operators in the following way. 
For Boolean formulas, we simply consider satisfaction to give us one proof, and otherwise we have no proof. 
The sum $\ser{\varphi_1 \fplus \varphi_2}$ is the number of proofs we have that $\varphi_1 \lor \varphi_2$ is true. 
This says that, if we have $n$ proofs for $\varphi_1$ and $m$ proofs for $\varphi_2$, then we interpret this as having $n + m$ proofs for the fact that $\varphi_1 \lor \varphi_2$ is true. Likewise, we interpret the product $\ser{\varphi_1 \fmal \varphi_2}$ as the number of proofs we have that $\varphi_1 \land \varphi_2$ is true. Similar interpretations apply for the weighted quantifiers.
\end{Exm}

\setcounter{Thm}{11}

\begin{Thm}[Weighted Fagin's theorem]
Let $\mathcal{S}$ be a semiring.
\begin{itemize}
\item[(i)] The logic $\mathrm{wESO}[\mathcal{S}]$ captures $\mathsf{NP}[\mathcal{S}]$ over \emph{ordered} finite structures in the vocabulary $\tau=\{R_1, \dots, R_j\}$. 
\item[(ii)] Assume that $\mathcal{S}$ is idempotent and commutative. Then, the logic $\mathrm{wESO}[\mathcal{S}]$  captures $\mathsf{NP}[\mathcal{S}]$ over all finite structures in the vocabulary $\tau=\{R_1, \dots, R_j\}$.

\end{itemize}
\end{Thm}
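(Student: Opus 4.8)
The plan is to verify both conditions of Definition~\ref{ch} for part~($i$) --- one by a structural induction compiling formulas into machines, the other by encoding computations as logical objects --- and then to deduce part~($ii$) from part~($i$) using commutativity and idempotency.

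\emph{From logic to machines.} I would argue by induction on a $\mathrm{wESO}[\mathcal{S}]$-formula $\phi$, producing for each subformula a weighted Turing machine whose behaviour equals its weighted semantics, while preserving polynomial running time. For a Boolean $\beta$ the classical Fagin construction gives a nondeterministic machine whose accepting computations correspond to the witnessing second-order assignments; weighting every transition by $\sone$ makes its behaviour equal to $\ser{\beta}$. A constant $s \in \Sr$ is realized by a machine with a single accepting computation of weight $s$, and $\phi_1 \fplus \phi_2$ by a machine that nondeterministically branches into the machines for the two disjuncts, so the behaviour is the sum. The genuinely new constructions are for $\phi_1 \fmal \phi_2$ and for $\bigfmalop x.\,\phi$: here the machine runs the component machines \emph{successively}, saving the input word and using deterministic weight-$\sone$ transitions to restore the tape between runs, so that every accepting computation of the composite is a concatenation of one accepting computation per factor and its weight is the corresponding product. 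Distributivity of $\smal$ over $\splus$ then yields that the sum over all such concatenations equals the product of the factor behaviours, matching the prescribed semantics; because the factors run in the \emph{fixed} order $a_1, \ldots, a_k$ dictated by $<$, non-commutativity of $\smal$ is respected. Finally $\bigfplusop x.\,\phi$ guesses an element $a \in A$ (coded by its position under $<$) before running the machine for $\phi$ with $x \mapsto a$, and an additive existential second-order quantifier guesses and writes down the relation before running the remaining machine. Hence $\|\phi\| \in \mathsf{NP}[\mathcal{S}]$.

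\emph{From machines to logic.} Given an $\mathsf{NP}[\mathcal{S}]$-machine $\mathcal{M}$ of time bound $n^k$, every computation on $\text{enc}(\mfa)$ has length polynomial in $|A|$, so I encode a computation by second-order relations over $k$-tuples of first-order variables --- the order $<$ then indexes time steps and tape positions up to $n^k$ --- recording the state, head position, tape contents, and transition applied at each step. A Boolean $\mathrm{SO}$-formula $\beta_{\mathcal{M}}$ checks that the encoded object is a valid accepting computation; by a standard local-consistency argument it can be arranged so that its satisfying assignments are in \emph{one-to-one} correspondence with the accepting computations of $\mathcal{M}$. Weights enter through the semiring constants $\nu(e)$: putting
$$\psi \defas \bigfplusop \overline{Z}.\, \bigl(\beta_{\mathcal{M}}(\overline{Z}) \fmal \bigfmalop \overline{t}.\, \bigl(\bigfplus_{e \in \Delta} (\chi_e(\overline{t}) \fmal \nu(e))\bigr)\bigr),$$
where $\overline{Z}$ is the tuple of encoding relations, $\overline{t}$ ranges over time steps in the order given by $<$, and $\chi_e(\overline{t})$ is Boolean asserting that $e$ is applied at step $\overline{t}$, the inner product recovers the weight of each computation and the additive second-order quantifiers sum these over all accepting computations, so $\ser{\psi}(\mfa) = \|\mathcal{M}\|(\text{enc}(\mfa))$. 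The \textbf{main obstacle} is making the correspondence in $\beta_{\mathcal{M}}$ genuinely bijective rather than merely equi-satisfiable: in the weighted setting any spurious or duplicated satisfying assignment would corrupt the sum, so the encoding must pin down each run uniquely, and the product must follow the temporal order of the run --- which is precisely where $<$ is used.

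\emph{Part~($ii$).} When $\mathcal{S}$ is commutative, the remark following the definition of the semantics makes the products in universal quantifications independent of the chosen order, so a formula not mentioning $<$ has order-independent semantics and $\ser{\phi}(\mfa)$ is well-defined on unordered structures; the logic-to-machine direction is then immediate from~($i$). For the converse, I would replace the built-in order in the sentence $\psi$ of~($i$) by an existentially quantified linear order: with a fresh binary second-order variable $L$,
$$\psi' \defas \bigfplusop L.\, \bigl(\mathrm{linord}(L) \fmal \psi_L\bigr),$$
where $\mathrm{linord}(L)$ is Boolean asserting that $L$ is a linear ordering with endpoints and $\psi_L$ is $\psi$ with $<$ replaced by $L$. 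Each linear order makes $\psi_L$ compute the value furnished by~($i$), and idempotency of $\splus$ collapses the sum over the finitely many orderings to that single value. Thus commutativity removes dependence on the \emph{order of factors} and idempotency removes dependence on the \emph{choice of order}, giving the characterization over all finite structures.
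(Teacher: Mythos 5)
Your strategy is the same as the paper's in both directions: a structural induction compiling formulas into machines, with sequential, tape-restoring, weight-$\sone$ simulation for $\fmal$ and $\bigfmalop$ justified by distributivity; a Boolean description of accepting computations in bijection with satisfying second-order assignments, multiplied by an ordered product of per-step transition weights; and, for part~($ii$), an existentially quantified order $L$ with commutativity handling the order of factors and idempotency collapsing the sum over admissible orders. Two steps would fail as written, though. First, the Boolean base case: you take a \emph{nondeterministic} machine whose accepting computations correspond to the witnessing assignments and claim that weighting every transition by $\sone$ yields behaviour $\ser{\beta}$. But the behaviour is the sum of $\sone$ over all accepting runs, so a Boolean formula with several witnesses (say $\exists x.\alpha$ with three witnesses, over $\mathbb{N}$) gets value $\sone\splus\sone\splus\sone\neq\sone=\ser{\beta}$. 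The paper instead builds a \emph{deterministic} polynomial-time machine for every Boolean subformula, guaranteeing exactly one accepting run of weight $\sone$ when $\beta$ holds; one-run-per-choice nondeterminism is reserved for $\bigfplusop x$ and $\bigfplusop X$, whose semantics genuinely is a sum over all choices. This is the very duplication problem you correctly identify in the converse direction, so the repair is one you already have in hand.

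Second, in the machine-to-formula direction the factor $\bigfplus_{e\in\Delta}(\chi_e(\overline{t})\fmal\nu(e))$ equals $\szero$ at every time step $\overline{t}$ at which no transition is applied, so the product $\bigfmalop\overline{t}$ over all $n^k$ time steps annihilates the weight of every accepting computation of length strictly less than $n^k$ --- which is typically all of them. The paper resolves this by first padding the machine with weight-$\sone$ ``do nothing'' transitions looping on accepting states, so that every accepting computation extends uniquely to one of length exactly $n^k$ with unchanged weight, and the formula accounts for exactly those. Without this normalization (or an equivalent device making the factor default to $\sone$ after the halting time) your $\psi$ evaluates to $\szero$ on essentially every input. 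With these two repairs your argument matches the paper's proof.
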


\begin{proof}

$(i)$: 
In order to establish (1) from Definition~\ref{ch},
we construct, for every $\mathrm{wESO}$-formula $\phi$, an $\mathsf{NP}$ Turing machine $\mathcal{M}$ with $\|\phi\| = \|\mathcal{M}\|$. If $v_1, \ldots, v_m$ are the free variables of $\phi$, we encode every input structure $\mathfrak{A}= \tuple{A, R_1^\mathfrak{A}, \dots, R_j^\mathfrak{A}}$ and free variable assignments $v_1 = a_1, \dotsc, v_m = a_m$ for $\mathcal{M}$ by
  \[\text{enc}(\mathfrak{A})= 
                              \text{enc}(R_1^\mathfrak{A}) \cdot \dots \cdot \text{enc}(R_j^\mathfrak{A})
                              \text{enc}(a_1) \cdot \dots \cdot \text{enc}(a_m), \]
  where $n = |A|$ and $\text{enc}(a_i) = \text{enc}(\{a_i\})$ if $v_i$ is a first-order variable.
  Let us assume that $A = \{0, \dotsc, n-1\}$.
  We proceed by induction on the structure of formulas
  and begin by showing that for every Boolean formula $\beta$,
  there exists a deterministic polynomial time Turing machine $\mathcal{M}$
  such that $\tuple{\mathfrak{A}, a_1, \ldots, a_m} \models \beta$ iff $\mathcal{M}$
  accepts $\tuple{\mathfrak{A}, a_1, \ldots, a_m}$,
  see also~\cite[Proposition 6.6]{Libkin}.
  \begin{itemize}
    \item If $\beta = R(x_1, \dotsc, x_l)$ or $\beta = X(x_1, \dotsc, x_l)$,
          we construct a deterministic Turing machine which
          deterministically checks if the bit corresponding to $\tuple{a_1, \ldots, a_l}$
          is 1 in $\text{enc}(R)$ or $\text{enc}(X)$, respectively.
          More precisely, we check whether the $m$'th symbol of the encoding is 1 for
          $m = \sum_{i = 1}^l n^{l-i} a_i$.
    \item If $\beta = \alpha \lor \gamma$,
          we let $\mathcal{M}_\alpha$ and $\mathcal{M}_\gamma$ be
          the deterministic polynomial time Turing machines for $\alpha$ and $\gamma$, respectively.
          We construct a Turing machine for $\beta$ which
          (1) simulates $\mathcal{M}_\alpha$ and accepts if the simulation accepts,
          (2) otherwise continues to simulate $\mathcal{M}_\gamma$ and
          accepts if the simulation accepts and
          (3) otherwise rejects the input.
    \item If $\beta = \lnot \alpha$,
          we let $\mathcal{M}_\alpha$ be
          the deterministic polynomial time Turing machine for $\alpha$.
          We construct a Turing machine for $\beta$ which simulates
          $\mathcal{M}_\alpha$ and accepts if the simulation rejects the input.
    \item If $\beta = \exists x. \alpha$,
          we let $\mathcal{M}_\alpha$ be
          the deterministic polynomial time Turing machine for $\alpha$.
          We construct a Turing machine for $\beta$ which iterates
          over all elements $a \in A$ and simulates
          $\mathcal{M}_\alpha$ on the input $\tuple{\mathfrak{A}, a, a_1, \dotsc, a_l}$
          and accepts once a simulation accepts.
          If no simulation accepts, the input is rejected.
          In total, we simulate $\mathcal{M}_\alpha$ at most $|A|$ times.
  \end{itemize}
  All of these constructions produce deterministic polynomial time Turing machines.
  Let $\mathcal{M}_\beta$ be the Turing machine constructed for $\beta$.
  We obtain a weighted Turing machine $\mathcal{M}$ from $\mathcal{M}_\beta$
  with $\|\beta\| = \|\mathcal{M}\|$ by defining the weight of every transition
  by $1$.
  Note that as $\mathcal{M}_\beta$ is deterministic,
  there is exactly one run with weight $1$ in $\mathcal{M}$
  for every input accepted by $\mathcal{M}_\beta$.
  
  We continue with the weighted formulas $\phi$.
  The case $\phi = \beta$ is already covered.
  For $\phi = s$, we construct Turing machine which
  accepts in a single transition with weight $s$ to a final state.
  \begin{itemize}
    \item For $\phi = \psi \oplus \zeta$,
          we let $\mathcal{M}_\psi$ and $\mathcal{M}_\zeta$ be
          the weighted nondeterministic Turing machines for $\psi$ and $\zeta$, respectively.
          We construct a Turing machine $\mathcal{M}$ for $\phi$ which
          nondeterministically simulates either $\mathcal{M}_\psi$ or $\mathcal{M}_\zeta$
          on the input.
          The simulations are started by a single transition of weight $1$ from the new initial state
          into the initial state of either $\mathcal{M}_\psi$ or $\mathcal{M}_\zeta$.
          Thus, every run of either $\mathcal{M}_\psi$ or $\mathcal{M}_\zeta$
          on the input is simulated by exactly one run of $\mathcal{M}$.
    \item For $\phi = \psi \fmal \zeta$,
          we let $\mathcal{M}_\psi$ and $\mathcal{M}_\zeta$ be
          the weighted nondeterministic Turing machines for $\psi$ and $\zeta$, respectively.
          We construct a Turing machine $\mathcal{M}$ for $\phi$ which
          first simulates $\mathcal{M}_\psi$ and then $\mathcal{M}_\zeta$ on the input.
          All transitions outside of the simulations,
          e.g., preparing the input for a simulation and clearing the tape for the second simulation,
          have weight $1$ and are deterministic.
          Thus, every combination of a run $r_\psi$ of $\mathcal{M}_\psi$
          and a run $r_\zeta$ of $\mathcal{M}_\zeta$ on the input
          is simulated by exactly one run of $\mathcal{M}$
          and the weight of this run is the product of the
          weights of $r_\psi$ and $r_\zeta$.
          As multiplication distributes over addition,
          $\mathcal{M}$ recognizes $\phi$.
    \item If $\phi = \bigoplus x.\psi$,
          we let $\mathcal{M}_\psi$ be
          the weighted nondeterministic Turing machine for $\psi$.
          We construct a Turing machine $\mathcal{M}$ for $\phi$ which nondeterministically
          guesses an element $a \in A$ and simulates
          $\mathcal{M}_\psi$ on the input $\tuple{\mathfrak{A}, a, a_1, \dotsc, a_l}$.
          We ensure that for every $a \in A$, there is exactly one run of $\mathcal{M}$
          which guesses $a$ and that all transitions which prepare the simulation
          have weight $1$.
          Thus, for every choice of $a \in A$,
          every run of $\mathcal{M}_\psi$ on $\tuple{\mathfrak{A}, a, a_1, \dotsc, a_l}$
          is simulated by exactly one run of $\mathcal{M}$.
    \item If $\phi = \bigfmal x.\psi$,
          we let $\mathcal{M}_\psi$ be
          the weighted nondeterministic Turing machine for $\psi$.
          We construct a Turing machine for $\phi$ which iterates
          over all elements $a \in A$ and simulates
          $\mathcal{M}_\psi$ on the input $\tuple{\mathfrak{A}, a, a_1, \dotsc, a_l}$.
          All transitions outside of the simulations,
          e.g., preparing the input for a simulation and clearing the tape for the next simulation,
          have weight $1$ and are deterministic.
          Thus, every combination of runs $r_0, \dotsc, r_{n-1}$ of $\mathcal{M}_\psi$
          on $\tuple{\mathfrak{A}, 0, a_1, \dotsc, a_l}, \dotsc, \tuple{\mathfrak{A}, n-1, a_1, \dotsc, a_l}$, respectively,
          is simulated by exactly one run of $\mathcal{M}$
          and the weight of this run is the product of the
          weights of $r_0, \dotsc, r_{n-1}$.
          As multiplication distributes over addition,
          $\mathcal{M}$ recognizes $\phi$.
    \item If $\phi = \bigoplus X.\psi$,
          we proceed like in the case of $\bigoplus x.\psi$
          but guess a relation $R$ of appropriate arity for $X$
          instead of an element $a \in A$.
  \end{itemize}

Now, for (2) from Definition~\ref{ch}, suppose that $P$ is recognizable by a non-deterministic weighted Turing machine in polynomial time. Let $\mathcal{M}= \tuple{Q, \Gamma,\Delta, \nu, q_0, F, \Box}$ be the machine  such that $\|\mathcal{M}\| = P$ in time $n^k$ where $n$ is the length of the input (the encoding of a structure from  $\str{\tau}_<$) and $k$ is bigger than the maximum of the arities of the relational symbols in $\tau$. We will construct a $\mathrm{wESO}$-formula $\phi$ such that  $\|\phi\|= P$. Let $Q=\{q_0, \dots, q_{m-1}\}$.  If $\tau=\emptyset$, we can set $\text{enc}(\mathfrak{A})=\underbrace{0\dotsb 0}_{\mathclap{|{A}|-\text{times}}}$ by definition, to make sure that $\text{enc}(\mathfrak{A})$ is at least the same length as $|{A}|$.
Thus, we may take $\Gamma$ in our Turing machine to be $\{0,1\}$.

The first task is to build a Boolean second-order formula $$\psi(T_0, T_1, T_2, H_{q_0}, \dots, H_{q_{m-1}}),$$  without second-order quantifiers but where $T_0, T_1, T_2, H_{q_0}, \dots, H_{q_{m-1}}$ are new second-order variables, such that there is a one-to-one correspondence between the accepting computation paths for the input $\text{enc}(\mathfrak{A})$ and the expansions of the model $\mathfrak{A}$ that satisfy  $\psi(T_0, T_1, T_2, H_{q_0}, \dots, H_{q_{m-1}})$, i.e.\ where $\psi(T_0, T_1, T_2, H_{q_0}, \dots, H_{q_{m-1}})$ takes value $1$.

Recall that $\mathfrak{A}$ with $|A|=n$ is linearly ordered by $<$. We will represent the $n^k$ time and space  parameters as the elements of the set $A^k$, so $k$-tuples from $A$. From $<$, we can define in first-order logic an associated successor relation $\mathtt{Succ}$, as well as the bottom $\bot$ and top elements $\top$. With this at hand, if $\overline{x}, \overline{y}$ are $k$-tuples of variables, we can define a successor relation on the elements of $A^k$ by the formula 
$$\overline{x} = \overline{y} +1 := \bigwedge_{i<k} (\bigwedge_{j<i} (x_j=\top \wedge y_j=\bot) \wedge \mathtt{Succ(x_i,y_i) \wedge } \bigwedge_{j>i} x_j=y_j).$$

Next, let us spell out the meaning of the predicates $T_0, T_1, T_2, H_{q_0}, \dots, H_{q_{m-1}}$:

\begin{itemize}
\item[(1)] $T_i (\overline{p}, \overline{t}) (i=0,1)$, where  $\overline{p}, \overline{t}$ are $k$-tuples of first-order variables, is meant to represent that at time $\overline{t}$ the position $\overline{p}$ of the tape contains the symbol $i$. $T_2$ does the same but for the blank symbol.

\item[(2)] $H_{q_i} (\overline{p}, \overline{t}) (0\leq i \leq m-1)$   represents that at time $\overline{t}$ the machine is in state $q_i$ with its head in position $\overline{p}$.
\end{itemize}

The idea is that we will use the predicates  $T_i$'s and $H_q$'s to describe an accepting computation of $\mathcal{M}$ started with input $\text{enc}(\mathfrak{A})$.
For simplicity, we would like to assume that all computations on a structure of size $n$ have length $n^k$,
i.e., that there are no shorter computations.
In order to do this,
we modify the Turing machine by adding transitions which ``do nothing'' from all accepting states.
Note that our definition of Turing machines specifically forbids transitions from accepting states.
The resulting Turing machine has the same behavior as our given Turing machine when only taking into account
computations of exactly length $n^k$, which is what the formula we construct will do.
More precisely, we add the transitions
$\{\tuple{q,a,q,a,0} \mid q \in F, a \in \Gamma\}$
and assign weight $1$ to all of them.

Given $k$-tuples of variables $\overline{x}=x_1, \dots, x_k$ and $\overline{y}=y_1, \dots, y_k$, we write $\overline{x}\neq \overline{y}$ as an abbreviation for $\bigvee_{1\leq i\leq k}x_i\neq y_i$.
We let $\psi(T_0, T_1, T_2, H_{q_0}, \dots, H_{q_{m-1}})$ be the conjunction of the following:
\begin{itemize}
\item 
$\forall\overline{p}\forall\overline{t} (T_0(\overline{p}, \overline{t}) \leftrightarrow \neg T_1(\overline{p}, \overline{t}) )$


``In every configuration no cell of the tape contains more than one symbol from the alphabet $\Gamma$.'' 

\item  $\forall\overline{t}\exists ! \overline{p} (\bigwedge_{q \in Q} H_q (\overline{p}, \overline{t})) \wedge \forall \overline{p}\forall\overline{t} (\bigwedge_{\substack{q,q'\in Q  q\neq q'}} (\neg H_q(\overline{p}, \overline{t}) \vee \neg H_{q'}(\overline{p}, \overline{t}))) $


``At any time the machine $\mathcal{M}$ is in exactly one state.'' 

\item $\exists\overline{t}\exists  \overline{p} (\bigvee_{q\in F} H_q(\overline{p}, \overline{t}))$


``Eventually the machine $\mathcal{M}$ enters an accepting state.''

\item $\bigvee_{\substack{\tuple{p,a,q,b,D} \in \Delta D \in \{-1,0,1\}}} \theta_{\tuple{p,a,q,b,D}}$, where 

$\theta_{\tuple{p,a,q,b,-1}}:= \forall \overline{t}\forall  \overline{p}((H_p( \overline{p},  \overline{t}) \wedge T_a( \overline{p},  \overline{t})) \rightarrow(H_q(\overline{p}-1,\overline{t}+1) \wedge T_b(\overline{p},\overline{t}+1)) \wedge \forall \overline{p}'(\overline{p}\neq \overline{p}' \rightarrow (\bigwedge_{i=0,1,2} (T_i(\overline{p}', \overline{t}+1) \leftrightarrow T_i(\overline{p}', \overline{t})))) )
$

$\theta_{\tuple{p,a,q,b,1}}:= \forall \overline{t}\forall  \overline{p}((H_p( \overline{p},  \overline{t}) \wedge T_a( \overline{p},  \overline{t})) \rightarrow(H_q(\overline{p}+1,\overline{t}+1) \wedge T_b(\overline{p},\overline{t}+1)) \wedge \forall \overline{p}'(\overline{p}\neq \overline{p}' \rightarrow (\bigwedge_{i=0,1,2} (T_i(\overline{p}', \overline{t}+1) \leftrightarrow T_i(\overline{p}', \overline{t})))) )
$

$\theta_{\tuple{p,a,q,b,0}}:= \forall \overline{t}\forall  \overline{p}((H_p( \overline{p},  \overline{t}) \wedge T_a( \overline{p},  \overline{t})) \rightarrow(H_q(\overline{p},\overline{t}+1) \wedge T_b(\overline{p},\overline{t}+1)) \wedge \forall \overline{p}'(\overline{p}\neq \overline{p}' \rightarrow (\bigwedge_{i=0,1,2} (T_i(\overline{p}', \overline{t}+1) \leftrightarrow T_i(\overline{p}', \overline{t})))) )
$


``The configurations respect the transitions in $\Delta$.'' 

\item $H_{q_0}( \underbrace{\bot \cdot\dots \cdot \bot}_{k-\text{times}},  \underbrace{\bot \cdot\dots \cdot \bot}_{k-\text{times}}) \wedge \bigwedge_{R_i\in \tau}\forall x_1, \dots ,x_{r_i} (( R_i (x_1, \dots ,x_{r_i})\rightarrow \\ T_1( \underbrace{\bot \cdot\dots \cdot \bot}_{k-r_i-\text{times}}x_1 \dots x_{r_i}, \underbrace{\bot \cdot\dots \cdot \bot}_{k-\text{times}}) )\wedge ( \neg R_i (x_1, \dots ,x_{r_i})\rightarrow T_0( \underbrace{\bot \cdot\dots \cdot \bot}_{k-r_i-\text{times}}x_1 \dots x_{r_i}, \underbrace{\bot \cdot\dots \cdot \bot}_{k-\text{times}})) \wedge  \forall x_1, \dots ,x_{k} ((x_1\neq \bot \vee \cdots \vee x_{k-r_i}\neq \bot)\rightarrow T_2( x_1\cdots x_k, \underbrace{\bot \cdot\dots \cdot \bot}_{k-\text{times}}))$

``At the initial time the tape contains $\text{enc}(\mathfrak{A})$ and it is in the initial state $q_0$.'' 

Here $r_i$ is the arity of the relation symbol $R_i$.
\end{itemize}

Finally, to get a $\mathrm{wESO}$-formula $\phi$ such that  $\| \phi\|= P$, we must first consider $\chi$:
$$\psi(T_0, T_1, T_2, H_{q_0}, \dots, H_{q_{m-1}})\fmal
\bigfmal {t_1} \ldots \bigfmal {t_k} \bigfplus_{\substack{\tuple{p,a,q,b,D} \in \Delta D \in \{-1,0,1\}}} \wt\tuple{p,a,q,b,D}
\fmal \beta_{\tuple{p,a,q,b,D}}(\overline{t})$$
where 
\noindent$\beta_{\tuple{p,a,q,b,1}}(\overline{t}) :=
\exists p_1 \ldots \exists p_k \exists q_1 \ldots \exists q_k \exists s_1 \ldots \exists s_k
\Big( (\overline{p} = \overline{q} + 1) \land (\overline{s} = \overline{t} + 1 ) \land H_p(\overline{p},\overline{t}) \land T_a(\overline{p},\overline{t}) \land H_q(\overline{q}, \overline{s}) \land T_b(\overline{p}, \overline{s}) \Big),
$

\noindent$\beta_{\tuple{p,a,q,b,-1}}(\overline{t}) :=
\exists p_1 \ldots \exists p_k \exists q_1 \ldots \exists q_k \exists s_1 \ldots \exists s_k
\Big( (\overline{p} = \overline{q} - 1) \land (\overline{s} = \overline{t} + 1 ) \land H_p(\overline{p},\overline{t}) \land T_a(\overline{p},\overline{t}) \land H_q(\overline{q}, \overline{s}) \land T_b(\overline{p}, \overline{s}) \Big),$

\noindent$\beta_{\tuple{p,a,q,b,0}}(\overline{t}) :=
\exists p_1 \ldots \exists p_k \exists q_1 \ldots \exists q_k \exists s_1 \ldots \exists s_k
\Big( (\overline{p} = \overline{q}) \land (\overline{s} = \overline{t} + 1 ) \land H_p(\overline{p},\overline{t}) \land T_a(\overline{p},\overline{t}) \land H_q(\overline{q}, \overline{s}) \land T_b(\overline{p}, \overline{s}) \Big),$ which intuitively tells us that $\tuple{p,a,q,b,D}$ is a transition made by some configuration in the computation in question.
The point of $\chi$ is that when we get for 
some interpretations of $T_0, T_1, T_2, H_{q_0},\dots, H_{q_{m-1}}$ an accepting computation $C_1e_1C_2e_2C_3\ldots C_ne_nC_{n+1}$, we want $\chi$ to give us its weight, $\nu(e_1)\nu(e_2)\ldots \nu(e_n)$ as value. The order of the tuples $\tuple{t_1, \ldots, t_k}$ in the universal quantification in $\chi$ reflects their  enumeration in the lexicographic order (indeed, our quantifiers respect the order of the structure and the evaluation of the quantifiers starts with the innermost and ends with the outermost quantifier). With all this in mind, $\phi$ is $\bigoplus T_0 \bigoplus T_1 \bigoplus T_2 \bigoplus H_{q_0} \ldots \bigoplus H_{q_{m-1}} \chi.$

$(ii)$: For this part we reason similarly as before. In the  proof of (1) we must observe that the semantics of multiplicative quantifiers can be now defined independently of the order thanks to the commutativity of the multiplication. In the proof of (2) we simply consider a Boolean formula $\theta (L)$ (which takes as only possible values $0$ or $1$) that expresses that the binary relation $L$ is a suitable ordering, we take the formula 
$\bigoplus L \bigoplus T_0 \bigoplus T_1 \bigoplus T_2 \bigoplus H_{q_0} \ldots \bigoplus H_{q_{m-1}} (\theta (L) \fmal \chi)$, and we replace every formula $x \leq y$ by $L(x,y)$. Note that in ($i$), if multiplication is commutative, the value of $\phi$ is independent of and the same for any given order. Thus, idempotency of  $\oplus$ guarantees that the value of $\bigoplus T_0 \bigoplus T_1 \bigoplus T_2 \bigoplus H_{q_0} \ldots \bigoplus H_{q_{m-1}} (\theta (L) \fmal \chi)$ is going to be that of $\bigoplus T_0 \bigoplus T_1 \bigoplus T_2 \bigoplus H_{q_0} \ldots \bigoplus H_{q_{m-1}}  \chi$ when $L$ is indeed an ordering.
\end{proof}

\begin{Rmk} Notice that in part (1) of the proof of Theorem~\ref{thm:wfagin} the constructed weighted Turing machine uses as weights for the transitions, besides $0$ and $1$, the same weights that occur in the given formula $\phi$. Analogously, in  part (2) the constructed formula uses only the weights that appeared in the transitions of the given Turing machine. Moreover, the two constructions are effective for all semirings.
\end{Rmk}

\setcounter{Thm}{14}

\begin{Cor}[Weighted Cook--Levin's theorem]
Let $\mathcal{S}$ be a finitely generated semiring. Then, $\mathsf{SAT}[\mathcal{S}]$ is $\mathsf{NP}[\mathcal{S}]$-complete.
\end{Cor}

\begin{proof}
We know that $\mathsf{SAT}[\mathcal{S}]$ is in $\mathsf{NP}[\mathcal{S}]$, so all that is left to show is that any series $\sigma\colon \Sigma^* \longrightarrow S$ (where $\Sigma$ is an alphabet) recognizable in $\mathsf{NP}[\mathcal{S}]$ is polynomially many-one reducible to the series $\text{SAT}[\mathcal{S}]$. First observe that the set of words $\Sigma^*$ can be regarded as a set $\text{Struct}_<[\tau]$ of ordered finite structures for a vocabulary $\tau$ (namely, the vocabulary that has a unary predicate for each symbol of the alphabet). Thus, by the weighted Fagin's theorem, we have a $\mathrm{wESO}$-formula $\phi$ such that $\| \phi\|= \sigma$. Our goal consists in finding a weighted propositional formula $\phi'$ such that $\|\phi'\|= \| \phi\|$. 

We may assume that $\phi= \bigoplus P_1, \dots P_n \psi$ as described in Fagin's theorem. Next, we polynomially associate any $\mathfrak{A} \in \text{Struct}_<[\tau]$ with a propositional formula $\psi_\mathfrak{A}\in \mathtt{Fmla}[\mathcal{S}]$ such that $\text{SAT}[\mathcal{S}](\psi_\mathfrak{A}) = \|\phi\|(\mathfrak{A}) = \sigma(\mathfrak{A})$. Start by considering a propositional vocabulary $\{P_i^{\overline{a}} \mid i=1,\dots, n, \overline{a} \in A^{\text{ar}(P_i)}\} \cup \{Q^{a=b}, Q^{a<b} \mid a,b \in A\}$ and suppose that we have first-order constants $\{a \mid a \in A\}$.  Replace every  quantifier $\exists x \theta(x)$  by the formula $\bigvee_{a \in A} \theta(x/a)$. Then, replace every quantifier of the form $\forall x \theta(x)$  by the formula $\bigwedge_{a \in A} \theta(x/a)$.
Then, replace every formula of the form $a<b, a=b$ or $R(\overline{a})$ ($R \in \tau$) by its corresponding truth-value in $\mathfrak{A}$ (i.e.\ $0$ or $1$). Finally, replace every formula of the form $P_i(\overline{a})$ by the propositional variable $P_i^{\overline{a}}$. The resulting  propositional formula $\psi_\mathfrak{A}$ is such that  $$\text{SAT}[\mathcal{S}](\psi_\mathfrak{A}) = \sum_{W \in \{0,1\}^{X_{\psi_\mathfrak{A}}}} \overline{W}(\psi_\mathfrak{A}) = \sum_{\substack{I_i \subseteq A^{\text{ar}(P_i)} 
i=1, \dots, n}} \| \psi\|(\mathfrak{A},  I_1, \dots, I_1) = \| \phi\|(\mathfrak{A}).$$
\end{proof}

Now we need some notation that we will use in the next few proofs. For any two formulas $\beta $ and $\varphi$,
we  define the abbreviation $\beta \triangleright \varphi = (\beta \fmal \varphi) \fplus (\lnot \beta \fmal \sone)$, i.e.,
\[\ser{\beta \triangleright \varphi}(\mfa, \rho) = \begin{cases} \ser{\varphi}(\mfa, \rho) & \text{if } \tuple{\mfa, \rho} \models \beta \\ \sone & \text{otherwise.} \end{cases}\]

\begin{Thm}[Weighted Immerman--Vardi's theorem] The logic $\mathrm{wLFP}[\mathcal{S}]$ (with weights in a semiring $\mathcal{S}$) \emph{captures} $\mathsf{FP}[\mathcal{S}]$ over ordered structures in the vocabulary $\tau=\{R_1, \dots, R_j\}$. 

\end{Thm}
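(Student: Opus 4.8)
```latex
The plan is to prove the two inclusions required by Definition~\ref{ch} separately, following the blueprint of the weighted Fagin's theorem (Theorem~\ref{thm:wfagin}) but now in the deterministic setting, and exploiting the fixed-point mechanism to simulate the polynomially many steps of a deterministic machine. Throughout, I would rely heavily on the presence of the linear order $<$ on the structure, which (as in the proof of Fagin's theorem) lets us encode the $n^k$ time and space parameters of a computation as $k$-tuples from $A^k$ via the definable successor relation $\overline{x} = \overline{y} + 1$, bottom $\bot$ and top $\top$.

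\textbf{From logic to machines.} For direction (1), given a $\mathrm{wLFP}[\mathcal{S}]$-formula $\phi$, I would construct a deterministic polynomial-time Turing machine that outputs a term in $T(G)$ whose value in $\mathcal{S}$ equals $\ser{\phi}(\mfa)$. The first-order and weighted-operator cases ($s$, $\varphi_1 \fplus \varphi_2$, $\varphi_1 \fmal \varphi_2$, $\bigfplusop x.\psi$, $\bigfmalop x.\psi$) are handled essentially as in the classical Immerman--Vardi argument combined with the bookkeeping from Theorem~\ref{thm:wfagin}: since the structure is ordered, the machine enumerates all elements (or tuples) of $A$ in order and concatenates the appropriate $\splus$- or $\smal$-terms, which keeps the number of alternations bounded. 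The genuinely new case is the fixed-point operator $[\mathbf{lfp}\,R\overline{x}.\psi](\overline{t})$. Here I would iterate the update operator $F_\psi$ at most $n^k$ times (since the least fixed point over a finite structure stabilizes within $|A|^k$ stages), at each stage deterministically recomputing the relation $R$ by evaluating $\psi$, using the order to store $R$ as a bit-string of length $n^k$ on a work tape; the whole iteration runs in deterministic polynomial time. Since $R$ occurs only positively, the stages are increasing, and after stabilization we read off whether $\overline{t}^{\mfa}$ lies in the fixed point. This produces the required term of bounded alternation depth in polynomial time.

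\textbf{From machines to logic.} For direction (2), given a series in $\mathsf{FP}[\mathcal{S}]$ computed by a deterministic polynomial-time machine, I would write a $\mathrm{wLFP}[\mathcal{S}]$-formula that traces the unique accepting computation. Because the machine is \emph{deterministic}, there is exactly one computation of length $n^k$ on each input, so the delicate one-to-one correspondence that forced the introduction of second-order existential quantifiers in Fagin's theorem collapses: we need no existential second-order guessing. Instead, I would use the $\mathbf{lfp}$ operator to \emph{define}, rather than guess, the predicates $T_0, T_1, T_2, H_{q_0}, \dots, H_{q_{m-1}}$ describing the tape contents, head position, and state at each time step. Concretely, the update formula $\psi$ encodes ``the configuration at time $\overline{t}+1$ is obtained from that at time $\overline{t}$ by applying the unique applicable transition,'' with the initial configuration ($H_{q_0}$ at the origin, tape carrying $\text{enc}(\mfa)$) serving as the base; determinism guarantees the fixed point is exactly the graph of the run. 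I would then multiply in the transition weights exactly as in $\chi$ from the proof of Theorem~\ref{thm:wfagin}, using $\bigfmal {t_1} \ldots \bigfmal {t_k}$ over the time tuples in lexicographic order together with $\bigfplus_{\tuple{p,a,q,b,D}\in\Delta} \wt\tuple{p,a,q,b,D} \fmal \beta_{\tuple{p,a,q,b,D}}(\overline{t})$ to pick out, at each time step, the weight of the transition actually taken. The product over all time steps then reproduces $\nu(e_1)\cdots\nu(e_{n^k})$, i.e., the machine's output value.

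\textbf{Main obstacle.} The step I expect to be hardest is establishing, in direction (2), that the fixed-point-defined predicates genuinely coincide with the configurations of the unique run and that the weighted product recovers the correct semiring value, while respecting the bounded-alternation constraint built into the definition of $\mathsf{FP}[\mathcal{S}]$. Unlike the Fagin construction, where the second-order quantifiers sum over \emph{all} expansions and the Boolean formula $\psi$ filters out the correct one, here the fixed point must \emph{uniquely} produce the run in a single pass; verifying that the stage-by-stage iteration of $F_\psi$ faithfully mirrors $C_k \longrightarrow_{e_k} C_{k+1}$ for a deterministic machine, and that no spurious tuples enter the least fixed point, requires care. A secondary subtlety is ensuring the weighting layer $\chi$ is applied \emph{outside} the fixed point (so that the $\mathbf{lfp}$ remains a purely Boolean/relational definition, as its semantics demands) while still correctly threading the order-respecting product of transition weights; I would resolve this by defining the configuration predicates by an unweighted $\mathbf{lfp}$ and only afterwards multiplying by the weighted term built from them.
```
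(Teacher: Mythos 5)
Your direction (1) is broadly in line with the paper's (evaluate the Boolean $\mathrm{LFP}$ part in polynomial time, handle constants, and use closure of $\mathsf{FP}[\mathcal{S}]$ under polynomial sums and products), but your direction (2) rests on a misreading of the definition of $\mathsf{FP}[\mathcal{S}]$ and computes the wrong quantity. In this paper the machine witnessing $\sigma \in \mathsf{FP}[\mathcal{S}]$ is an ordinary \emph{unweighted} deterministic polynomial-time Turing machine that \emph{outputs a term} $\sum^{m_1}_{i_1=1}\prod^{n_1}_{j_1=1}\dotsb\sum^{m_k}_{i_k=1}\prod^{n_k}_{j_k=1} s_{i_1 j_1 \dotsb i_k j_k}$ over a finite generating set $G$, and $\sigma(w)$ is the \emph{evaluation of that term}; it is not a weighted Turing machine whose value is the product of transition weights along its run. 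Your construction traces the unique run and multiplies in $\nu(e_1)\cdots\nu(e_{n^k})$, which for a deterministic machine yields a single product of finitely many fixed weights. Such single products cannot realize general elements of $\langle G\rangle$ that require genuine additions (already over $\mathbb{N}$ with $G=\{0,1\}$ a product of $0$s and $1$s is $0$ or $1$, whereas $\mathsf{FP}[\mathbb{N}]=\mathsf{FP}$ contains functions with outputs of polynomial bit length), so your formula cannot capture all of $\mathsf{FP}[\mathcal{S}]$. The run-tracing-with-weights idea is the right one for the nondeterministic class $\mathsf{NP}[\mathcal{S}]$ in Theorem~\ref{thm:wfagin}, but it does not transfer to the term-output model defining $\mathsf{FP}[\mathcal{S}]$.

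The paper's direction (2) instead reduces to the \emph{classical Boolean} Immerman--Vardi theorem: since the output term has length at most $|A|^l$, its index positions are coded by tuples from $A^l$, and for each generator $s_p \in G$ one considers the polynomial-time decidable language $\mathcal{L}_p$ of tuples $\tuple{\mathfrak{A}, \overline{a}_1, \overline{b}_1, \dotsc, \overline{a}_k, \overline{b}_k}$ at which the output term carries the coefficient $s_p$; classical Immerman--Vardi gives an unweighted fixed-point formula $\phi_p$ defining $\mathcal{L}_p$, and the weighted formula $\bigfplusop \overline{x}_1 \bigfmalop \overline{y}_1 \dotsb \bigfplusop \overline{x}_k \bigfmalop \overline{y}_k \bigfmal_{p=1}^{\ell}\bigl(\phi_p(\overline{x}_1,\overline{y}_1,\dotsc,\overline{x}_k,\overline{y}_k) \triangleright s_p\bigr)$ reassembles exactly the alternating sum-of-products structure of the term. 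The bounded alternation built into the definition of $\mathsf{FP}[\mathcal{S}]$ is precisely what makes this finite quantifier prefix possible. To repair your proof you would need to replace the run-tracing step by some such mechanism that locates the generators inside the \emph{output term} and re-evaluates it with weighted quantifiers, rather than multiplying transition weights along the computation.
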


\begin{proof}
To show (1) from Definition~\ref{ch}, first note that every $\mathrm{LFP}$-formula $\beta$ can be evaluated in polynomial time and hence a polynomial time Turing machine can output $1$ or $0$ depending on whether $\beta$ is satisfied or not.
Also, for a semiring element $s$, the Turing machine outputting the term $s$ for every input runs in constant and hence polynomial time.

Furthermore, $\mathsf{FP}[\mathcal{S}]$ is closed under polynomial sums as we may compute a term of polynomially many summands, each of which is computable in polynomial time, in polynomial time. Similarly, $\mathsf{FP}[\mathcal{S}]$ is closed under polynomial products.

To show (2), suppose that $\sigma \in \mathsf{FP}[\mathcal{S}]$, that is, $\sigma \colon \Sigma^* \longrightarrow \langle G \rangle$ for a finite $G\subseteq S$ and a finite alphabet $\Sigma$, and there exists a polynomial-time deterministic Turing machine which given a word $\text{enc}(\mathfrak{A})$ outputs a word $w_{\text{enc}(\mathfrak{A})}$ in the algebra of terms $\mathcal{T}(G)$ such that $w_{\text{enc}(\mathfrak{A})}$ evaluates to $\sigma(\text{enc}(\mathfrak{A}))$.
Then for some $l \in \mathbb{N}$, we have $|w_{\text{enc}(\mathfrak{A})}| \leq |A|^l$ for all structures $\mathfrak{A}$, where $A$ is the universe of $\mathfrak{A}$.
Like in the proof of Theorem~\ref{thm:wfagin},
we encode numbers in $\{0,\dotsc,|A|^l-1\}$ using tuples from $A^l$.

For each $s_p \in G=\{s_1, \dots, s_\ell\}$, consider the language \[\mathcal{L}_p = \{ \tuple{\mathfrak{A}, \overline{a}_1, \overline{b}_1, \dotsc, \overline{a}_k, \overline{b}_k} \mid w_{\text{enc}(\mathfrak{A})} = \bigsplus^{m_1}_{i_1=1} \bigsmal^{n_1}_{j_1=1} \dotsb \bigsplus^{m_k}_{i_k=1} \bigsmal^{n_k}_{j_k=1} s_{i_1j_1 \dotsb i_k j_k} \ \text{and} \  s_{ \overline{a}_1  \overline{b}_1 \dotsb \overline{a}_k \overline{b}_k} = s_p\}.\]
Note that $m_1,n_1,\dotsc,m_k,n_k$ are all bounded by $A^l$.
Then $\mathcal{L}_p$ is recognizable in polynomial time, so by~\cite{Immerman1, Vardi}, there is an $\mathrm{IFL}$-formula $\phi_p(\overline{x}_1, \overline{y}_1,\dotsc, \overline{x}_k, \overline{y}_k)$ such that $\mathfrak{A} \models \phi_p(\overline{a}_1, \overline{b}_1, \dotsc, \overline{a}_k, \overline{b}_k)$ iff $\tuple{\mathfrak{A}, \overline{a}_1, \overline{b}_1, \dotsc, \overline{a}_k, \overline{b}_k} \in \mathcal{L}_p$.

Now we take the $\mathrm{wIFL}$-formula $\psi:= \bigfplusop \overline{x}_1 \bigfmalop \overline{y}_1 \dotsb \bigfplusop \overline{x}_k \bigfmalop \overline{y}_k \bigfmal_{p=1}^\ell(\phi_p(\overline{x}_1, \overline{y}_1, \dotsc, \overline{x}_k, \overline{y}_k) \triangleright s_p)$.
We have then that $\|\psi\|(\mathfrak{A})$ is exactly $\bigsplus^{m_1}_{i_1=1} \bigsmal^{n_1}_{j_1=1} \dotsb \bigsplus^{m_k}_{i_k=1} \bigsmal^{n_k}_{j_k=1} s_{i_1j_1 \dotsb i_k j_k}= \sigma(\text{enc}(\mathfrak{A}))$.
\end{proof}

\setcounter{Thm}{18}
\begin{Thm}
The logic $\mathrm{wPFP}[\mathcal{S}] + \{\prod X, \sum X\}$ (with weights in a semiring $\mathcal{S}$) \emph{captures} $\mathsf{FPSPACE}[\mathcal{S}]$ over ordered structures in the vocabulary $\tau=\{R_1, \dots, R_j\}$. 
\end{Thm}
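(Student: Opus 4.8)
The plan is to establish both inclusions of Definition~\ref{ch} by adapting the template of the weighted Immerman--Vardi theorem (Theorem~\ref{I-V}), replacing its two polynomial-time ingredients by their polynomial-space analogues. The Boolean base formulas will now be evaluated in $\mathsf{PSPACE}$ rather than in polynomial time, and the first-order weighted quantifiers $\bigoplus x$, $\bigotimes x$ (which range over the $|A|$-element universe and hence yield only polynomially many summands and factors) will be replaced by the second-order weighted quantifiers $\sum X$, $\prod X$ (which range over the $2^{|A|^{\mathrm{ar}(X)}}$ subsets of $A^{\mathrm{ar}(X)}$ and hence yield exponentially many summands and factors, matching the exponentially long terms that an $\mathsf{FPSPACE}[\mathcal{S}]$ machine may output). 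Throughout I use that, over ordered structures, partial fixed-point logic already captures $\mathsf{PSPACE}$ in the Boolean case (\cite{Vianu, Vardi}), and that its second-order extension does as well (\cite{Richerby}).

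For inclusion (1), I would argue by induction on a formula $\phi \in \mathrm{wPFP}[\mathcal{S}] + \{\prod X, \sum X\}$ that $\|\phi\|$ lies in $\mathsf{FPSPACE}[\mathcal{S}]$, building a deterministic polynomial-space machine that writes out a term of the required alternating shape. A Boolean $\mathrm{PFP}$ base formula $\beta$ (possibly with free second-order variables supplied by enclosing quantifiers) is evaluated in polynomial space and the machine emits $1$ or $0$; a constant $s$ is emitted directly. For $\varphi_1 \fplus \varphi_2$ and $\varphi_1 \fmal \varphi_2$ the machine recursively emits the two subterms separated by $+$ or $\cdot$, reusing its work space, and for $\bigoplus x.\varphi$, $\bigotimes x.\varphi$ it iterates $a$ over the universe, emitting subterms joined by $+$ or $\cdot$. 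The decisive cases are $\sum X.\varphi$ and $\prod X.\varphi$: here the machine iterates over all $I \subseteq A^{\mathrm{ar}(X)}$, each stored as a bit-vector of length $|A|^{\mathrm{ar}(X)}$, which costs only polynomial space even though there are exponentially many such $I$; for each it emits the corresponding subterm, joined by $+$ respectively $\cdot$. Since the output tape is write-only, the possibly exponential length of the term is harmless, and the fixed nesting depth of $\phi$ bounds, after a routine normalisation into strictly alternating shape, the number $k$ of sum/product alternations, so $\|\phi\| \in \mathsf{FPSPACE}[\mathcal{S}]$.

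For inclusion (2), let $\sigma \in \mathsf{FPSPACE}[\mathcal{S}]$ be witnessed by a polynomial-space machine $M$ that on input $\text{enc}(\mathfrak{A})$ outputs a word $\sum^{m_1}_{i_1=1} \prod^{n_1}_{j_1=1} \dotsb \sum^{m_k}_{i_k=1} \prod^{n_k}_{j_k=1} s_{i_1 j_1 \dotsb i_k j_k}$ over $G = \{s_1, \dots, s_\ell\}$ whose value is $\sigma(\text{enc}(\mathfrak{A}))$. Since $M$ runs in polynomial space, each range $m_t, n_t$ is bounded by $2^{q(|A|)}$ for a fixed polynomial $q$, so I fix a constant $d$ with $q(|A|) \leq |A|^d$ and encode each index $i_t$ (respectively $j_t$) as a subset $X_t \subseteq A^d$ (respectively $Y_t \subseteq A^d$) read as a binary number under the ordering. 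For each $p$ I introduce a Boolean predicate $\phi_p(X_1, Y_1, \dots, X_k, Y_k)$ that holds exactly when the encoded multi-index is in range and $s_{i_1 j_1 \dotsb i_k j_k} = s_p$. This predicate is decidable in polynomial space -- one streams the output of $M$ while maintaining the poly-length index counters and inspects the coefficient addressed by the arguments -- so by the Abiteboul--Vianu--Vardi theorem (\cite{Vianu, Vardi}) it is definable by a $\mathrm{PFP}$ formula over the expansion $\tuple{\mathfrak{A}, X_1, Y_1, \dots, Y_k}$. I then set
\[
\psi := \sum X_1 \prod Y_1 \dotsb \sum X_k \prod Y_k \; \bigfmal_{p=1}^{\ell} \big( \phi_p(X_1, Y_1, \dots, X_k, Y_k) \triangleright s_p \big),
\]
exactly parallel to the formula in the proof of Theorem~\ref{I-V}, and verify $\|\psi\|(\mathfrak{A}) = \sigma(\text{enc}(\mathfrak{A}))$: for each in-range multi-index the inner product selects the unique matching $s_p$, while out-of-range indices are neutralised -- using $\szero, \sone \in G$ -- by letting $\phi_p$ emit the appropriate identity, so that out-of-range summands contribute $\szero$ and out-of-range factors contribute $\sone$.

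The main obstacle, as in the classical case, is inclusion (2), and specifically the coefficient-extraction predicate $\phi_p$. One must confirm that addressing a single coefficient inside the exponentially long output word of $M$ is genuinely a polynomial-space task (it is, since only poly-length counters are added to the poly-space simulation of $M$) and, more delicately, arrange the range-checking and padding inside $\phi_p$ so that the full second-order quantifiers, which range over \emph{all} subsets of $A^d$, collapse to the intended ranges $[1,m_t]$ and $[1,n_t]$ without spurious contributions. Once this bookkeeping is in place, the correctness of $\psi$ follows by the same distributivity argument as in Theorem~\ref{I-V}, now carried out with second-order in place of first-order quantifiers.
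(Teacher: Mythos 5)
Your proposal is correct and follows essentially the same route as the paper's proof: direction (1) via closure of $\mathsf{FPSPACE}[\mathcal{S}]$ under the semiring operations and under exponential sums/products (polynomial-space counters over subsets of $A^l$), and direction (2) by encoding the exponentially bounded indices of the output term as relations over $A^l$, defining for each $s_p\in G$ a $\mathsf{PSPACE}$-decidable coefficient-extraction language, invoking the Abiteboul--Vianu--Vardi theorem to get $\mathrm{PFP}$-formulas $\phi_p$, and assembling $\psi:=\bigfplus X_1\bigfmal Y_1\dotsb\bigfplus X_k\bigfmal Y_k\bigfmal_{p=1}^{\ell}(\phi_p\triangleright s_p)$. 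Your explicit treatment of the out-of-range indices (padding so that spurious summands contribute $\szero$ and spurious factors $\sone$) is a detail the paper's write-up leaves implicit, but it does not change the argument.
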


\begin{proof}
To show (1) from Definition~\ref{ch}, we proceed like in Theorem~\ref{I-V}.
First, every $\mathrm{PFP}$-formula $\beta$ can be evaluated in $\mathsf{PSPACE}$, so we may compute its characteristic function in $\mathsf{PSPACE}$ as well. Also, constant functions can be computed in $\mathsf{PSPACE}$.
Finally, $\mathsf{FPSPACE}[\mathcal{S}]$ is closed under exponential sums since an exponential counter can be stored in polynomial space. Similarly, $\mathsf{FPSPACE}[\mathcal{S}]$ is closed under exponential products.

To show (2), suppose that $\sigma \in \mathsf{FPSPACE}[\mathcal{S}]$, that is, $\sigma \colon \Sigma^* \longrightarrow \langle G \rangle$ for a finite $G\subseteq S$ and a finite alphabet $\sigma$, and there exists a polynomial-space deterministic Turing machine which given a word $\text{enc}(\mathfrak{A})$ outputs a word $w_{\text{enc}(\mathfrak{A})}$ in the algebra of terms $\mathcal{T}(G)$ such that $w_{\text{enc}(\mathfrak{A})}$ evaluates to $\sigma(\text{enc}(\mathfrak{A}))$.
Then, for some $l \in \mathbb{N}$, we have $|w_{\text{enc}(\mathfrak{A})}| \leq 2^{|A|^l}$ for all structures $\mathfrak{A}$, where $A$ is the universe of $\mathfrak{A}$.
We encode numbers in $\{0,\dotsc,2^{|A|^l}-1\}$ using subsets of $A^l$ as follows.

Let $\pi\colon 2^{A^l} \longrightarrow \mathbb{N}$ be a function that lets $\pi(B)$ be the number of relations in  $2^{A^l}$ that are smaller than  $B$ according to the following induced linear order on relations of arity $l$: $X <^* Y$ iff there is $u\in Y \setminus X$ such that if $v> u$, $v \in Y$ iff $u \in X$. For each $s_p \in G=\{s_1, \dots, s_\ell\}$, consider the language \[\mathcal{L}_p = \{ \tuple{\mathfrak{A}, B_1, C_1, \dotsc, B_k, C_k} \mid w_{\text{enc}(\mathfrak{A})} = \bigsplus^{m_1}_{i_1=1} \bigsmal^{n_1}_{j_1=1} \dotsb \bigsplus^{m_k}_{i_k=1} \bigsmal^{n_k}_{j_k=1\mathcal{L}} s_{i_1j_1 \dotsb i_k j_k},  \  \] \[\ \,\,\,\, \,\,\,\, \,\,\,\, \,\,\,\, B_1, C_1, \dotsc, B_k, C_k \subseteq A^l \, \, \text{and} \, \,  s_{ \pi(B_1) \pi(C_1) \dotsb \pi(B_k) \pi(C_k) } =   s_p\}.\]
Note that $m_1,n_1,\dotsc,m_k,n_k$ are all bounded by $2^{A^l}$.
Then, $\mathcal{L}_p$ is recognizable in $\mathsf{PSPACE}$ as $s_{ \pi(B_1) \pi(C_1) \dotsb \pi(B_k) \pi(C_k) }$ can be computed in $\mathsf{PSPACE}$ and compared to $s_p$, so by~\cite{Vianu, Vardi}, there is a $\mathrm{PFP}$-formula $\phi_p(X_1, Y_1, \dotsc, X_k, Y_k)$ such that\\ $\tuple{\mathfrak{A}, B_1, C_1, \dotsc, B_k, C_k} \models \phi_p(X_1, Y_1, \dotsc, X_k, Y_k)$ iff $\tuple{\mathfrak{A}, B_1, C_1, \dotsc, B_k, C_k} \in \mathcal{L}_p$.

Now we take the $\mathrm{wPFP}[\mathcal{S}]+ \{\prod X, \sum X\}$-formula\\ $\psi:= \bigfplusop X_1 \bigfmalop Y_1 \dotsb \bigfplusop X_k \bigfmalop Y_k \bigfmal_{p=1}^\ell(\phi_p(X_1, Y_1, \dotsc, X_k, Y_k) \triangleright s_p)$.
%
%
We have then that $\|\psi\|(\mathfrak{A})$ is exactly $\bigsplus^{m_1}_{i_1=1} \bigsmal^{n_1}_{j_1=1} \dotsb \bigsplus^{m_k}_{i_k=1} \bigsmal^{n_k}_{j_k=1} s_{i_1j_1 \dotsb i_k j_k}= \sigma(\text{enc}(\mathfrak{A}))$.

\end{proof}

\setcounter{Thm}{20}
\begin{Thm}
The logic $\mathrm{wPFP}[\mathcal{S}]$ (with weights in a semiring $\mathcal{S}$) \emph{captures} $\mathsf{FPSPACE}_{poly}[\mathcal{S}]$ over ordered structures in the vocabulary $\tau=\{R_1, \dots, R_j\}$. 
\end{Thm}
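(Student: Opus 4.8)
The plan is to verify the two conditions of Definition~\ref{ch} by adapting the arguments of Theorems~\ref{I-V} and~\ref{space}. The decisive difference from Theorem~\ref{space} is that here the output term $w_{\text{enc}(\mathfrak{A})}$ is required to have \emph{polynomial} length rather than merely exponential length; consequently the exponentially-indexed second-order quantifiers $\sum X, \prod X$ used there are no longer necessary, and the plain first-order weighted quantifiers $\bigfplusop x, \bigfmalop x$ of $\mathrm{wPFP}[\mathcal{S}]$ --- which range over the $|A|$ elements of the universe and hence produce only polynomially many summands and factors --- suffice. In this sense, $\mathsf{FPSPACE}_{poly}[\mathcal{S}]$ stands to $\mathsf{FP}[\mathcal{S}]$ exactly as $\mathsf{PSPACE}$ stands to $\mathsf{P}$, so the construction mirrors that of Theorem~\ref{I-V} with $\mathrm{PFP}$ in place of $\mathrm{IFP}$.

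For direction (1) I would argue as in Theorem~\ref{space}: every $\mathrm{PFP}$-formula $\beta$ can be evaluated in $\mathsf{PSPACE}$, so its characteristic function lies in $\mathsf{FPSPACE}_{poly}[\mathcal{S}]$, and the constant function outputting a generator $s$ is trivially there as well. The point to check is that $\mathsf{FPSPACE}_{poly}[\mathcal{S}]$ is closed under the first-order weighted constructs while respecting the polynomial length bound: a weighted sum or product of two polynomially-bounded terms is again polynomially bounded, and a first-order quantifier $\bigfplusop x.\varphi$ or $\bigfmalop x.\varphi$ enlarges the term length only by a factor of $|A|$. Since a $\mathrm{wPFP}$-formula has a fixed syntactic shape with a bounded number of nested quantifiers, a straightforward induction shows that the term it produces has length polynomial in $|A|$ and is computable in polynomial space; hence $\|\phi\| \in \mathsf{FPSPACE}_{poly}[\mathcal{S}]$.

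For direction (2), suppose $\sigma \in \mathsf{FPSPACE}_{poly}[\mathcal{S}]$, so a polynomial-space deterministic machine outputs, on input $\text{enc}(\mathfrak{A})$, a term $w_{\text{enc}(\mathfrak{A})} = \bigsplus^{m_1}_{i_1=1} \bigsmal^{n_1}_{j_1=1} \dotsb \bigsplus^{m_k}_{i_k=1} \bigsmal^{n_k}_{j_k=1} s_{i_1 j_1 \dotsb i_k j_k}$ of length at most $|A|^l$ for some $l \in \mathbb{N}$. As in Theorem~\ref{I-V}, I would encode the index range $\{0, \dotsc, |A|^l - 1\}$ by $l$-tuples of elements of $A$, so that $m_1, n_1, \dotsc, m_k, n_k$ are all bounded by $|A|^l$. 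For each generator $s_p \in G = \{s_1, \dotsc, s_\ell\}$ I would define the language $\mathcal{L}_p$ of all $\tuple{\mathfrak{A}, \overline{a}_1, \overline{b}_1, \dotsc, \overline{a}_k, \overline{b}_k}$ for which the coefficient $s_{\overline{a}_1 \overline{b}_1 \dotsb \overline{a}_k \overline{b}_k}$ equals $s_p$. Since the coefficient occurring at a prescribed index can be extracted in polynomial space, each $\mathcal{L}_p$ is $\mathsf{PSPACE}$-recognizable, so by the Abiteboul--Vianu--Vardi theorem~\cite{Vianu, Vardi} there is a $\mathrm{PFP}$-formula $\phi_p(\overline{x}_1, \overline{y}_1, \dotsc, \overline{x}_k, \overline{y}_k)$ defining $\mathcal{L}_p$. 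I would then take
\[
\psi := \bigfplusop \overline{x}_1 \bigfmalop \overline{y}_1 \dotsb \bigfplusop \overline{x}_k \bigfmalop \overline{y}_k \bigfmal_{p=1}^\ell \big( \phi_p(\overline{x}_1, \overline{y}_1, \dotsc, \overline{x}_k, \overline{y}_k) \triangleright s_p \big),
\]
and verify, exactly as in Theorem~\ref{I-V}, that $\|\psi\|(\mathfrak{A})$ unfolds to $w_{\text{enc}(\mathfrak{A})}$ and hence equals $\sigma(\text{enc}(\mathfrak{A}))$.

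Because the construction is so close to Theorems~\ref{I-V} and~\ref{space}, I do not expect a genuinely new difficulty; the work is essentially bookkeeping. The one point needing care is matching the quantifier ranges to the shape of the term: the first-order weighted quantifiers of $\mathrm{wPFP}$ iterate over $l$-tuples from $A$, producing exactly the $|A|^l$-bounded index ranges $m_1, n_1, \dotsc, m_k, n_k$, and this is permissible precisely because of the polynomial length bound on $w_{\text{enc}(\mathfrak{A})}$. This is also the crux of why no second-order weighted quantifiers are needed here, in contrast with Theorem~\ref{space}, and the only subtlety in the verification is confirming that extracting the coefficient at a given index from a polynomial-space computation stays within $\mathsf{PSPACE}$.
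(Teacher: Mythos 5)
Your proposal is correct and follows essentially the same route as the paper's proof: part (1) via closure of $\mathsf{FPSPACE}_{poly}[\mathcal{S}]$ under the polynomially-bounded weighted operations as in Theorem~\ref{I-V}, and part (2) by encoding the polynomially many indices as $l$-tuples over $A$, defining the languages $\mathcal{L}_p$, invoking the Abiteboul--Vianu--Vardi theorem to obtain the $\mathrm{PFP}$-formulas $\phi_p$, and assembling the same formula $\psi$ with first-order weighted quantifiers. The additional remarks you make about why first-order (rather than second-order) weighted quantifiers suffice here are accurate and consistent with the paper's construction.
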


\begin{proof} To show (1) from Definition~\ref{ch}, we need to prove that $\mathsf{FPSPACE}_{poly}[\mathcal{S}]$ is closed under the relevant semiring operations. We proceed as in the first half of Theorem~\ref{I-V}.

To show (2), suppose that $\sigma \in \mathsf{FPSPACE}_{poly}[\mathcal{S}]$, that is, $\sigma \colon \Sigma^* \longrightarrow \langle G \rangle$ for a finite $G\subseteq S$ and a finite alphabet $\Sigma$, and there exists a polynomial-space deterministic Turing machine with polynomial size output which given a word $\text{enc}(\mathfrak{A})$ outputs a word $w_{\text{enc}(\mathfrak{A})}$ in the algebra of terms $\mathcal{T}(G)$ such that $w_{\text{enc}(\mathfrak{A})}$ evaluates to $\sigma(\text{enc}(\mathfrak{A}))$.
Then similar to the proof of Theorem~\ref{I-V},
there exists some $l \in \mathbb{N}$ with $|w_{\text{enc}(\mathfrak{A})}| \leq |A|^l$ for all structures $\mathfrak{A}$, where $A$ is the universe of $\mathfrak{A}$.

For each $s_p \in G=\{s_1, \dots, s_\ell\}$, consider the language \[\mathcal{L}_p = \{ \tuple{\mathfrak{A}, \overline{a}_1, \overline{b}_1, \dotsc, \overline{a}_k, \overline{b}_k} \mid w_{\text{enc}(\mathfrak{A})} = \bigsplus^{m_1}_{i_1=1} \bigsmal^{n_1}_{j_1=1} \dotsb \bigsplus^{m_k}_{i_k=1} \bigsmal^{n_k}_{j_k=1} s_{i_1j_1 \dotsb i_k j_k} \ \text{and} \  s_{ \overline{a}_1  \overline{b}_1 \dotsb \overline{a}_k \overline{b}_k} =   s_p\}.\]
$\mathcal{L}_p$ is recognizable in
 $\mathsf{PSPACE}$, so by~\cite{Vianu, Vardi}, there is a $\mathrm{PFP}$-formula 
  $\phi_p(\overline{x}_1, \overline{y}_1, \dotsc, \overline{x}_k, \overline{y}_k)$ such that $\mathfrak{A} \models \phi_p(\overline{a}_1, \overline{b}_1, \dotsc, \overline{a}_k, \overline{b}_k)$ iff $(\mathfrak{A}, \overline{a}_1, \overline{b}_1, \dotsc, \overline{a}_k, \overline{b}_k) \in \mathcal{L}_p$.

Now we take the $\mathrm{wPFP}$-formula $\psi:= \bigfplusop \overline{x}_1 \bigfmalop \overline{y}_1 \dotsb \bigfplusop \overline{x}_k \bigfmalop \overline{y}_k \bigfmal_{p=1}^\ell(\phi_p(\overline{x}_1, \overline{y}_1, \dotsc, \overline{x}_k, \overline{y}_k) \triangleright s_p)$.
%
%
We have then that $\|\psi\|(\mathfrak{A})$ is exactly $\bigsplus^{m_1}_{i_1=1} \bigsmal^{n_1}_{j_1=1} \dotsb \bigsplus^{m_k}_{i_k=1} \bigsmal^{n_k}_{j_k=1} s_{i_1j_1 \dotsb i_k j_k}= \sigma(\text{enc}(\mathfrak{A}))$.
\end{proof}

\setcounter{Thm}{22}
\begin{Thm}
The logic $\mathrm{wDTC}[\mathcal{S}]$ (with weights in a semiring $\mathcal{S}$) \emph{captures} $\mathsf{FPLOG}[\mathcal{S}]$ over ordered structures in the vocabulary $\tau=\{R_1, \dots, R_j\}$. 
\end{Thm}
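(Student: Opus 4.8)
The plan is to follow the two-part template of the proof of Theorem~\ref{I-V}, replacing throughout ``polynomial time'' by ``logarithmic space'' and the Immerman--Vardi characterization of $\mathsf{P}$ by Immerman's characterization of $\mathsf{DLOGSPACE}$ via $\mathrm{DTC}$~\cite{Immerman1}.

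For direction (1) of Definition~\ref{ch}, I would first observe that every $\mathrm{DTC}$-formula $\beta$ can be evaluated in logarithmic space, so its $0/1$ characteristic function is computable by a deterministic logspace machine, and that a semiring constant $s$ is output in constant (hence logarithmic) space. It then remains to check that $\mathsf{FPLOG}[\mathcal{S}]$ is closed under the weighted first-order operations corresponding to $\oplus$, $\otimes$, $\bigoplus x$, and $\bigotimes x$. For $\bigoplus x$ and $\bigotimes x$ the iteration ranges over the $n$ elements of $A$; a counter into $A$ (indeed into any $A^c$ for constant $c$) needs only $O(\log n)$ bits, so we may accumulate the $n$ summands or factors — each a subterm computable in logspace — while storing only the current index, keeping the whole computation in logarithmic space. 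This is the $\mathsf{FPLOG}$-analogue of the closure under polynomial sums and products used in Theorem~\ref{I-V}.

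For direction (2), suppose $\sigma\in\mathsf{FPLOG}[\mathcal{S}]$, so that some deterministic logspace machine, on input $\text{enc}(\mathfrak{A})$, writes a term word $w_{\text{enc}(\mathfrak{A})}\in\mathcal{T}(G)$ of the bounded-alternation shape $\bigsplus^{m_1}_{i_1=1}\bigsmal^{n_1}_{j_1=1}\dotsb\bigsplus^{m_k}_{i_k=1}\bigsmal^{n_k}_{j_k=1} s_{i_1 j_1\dotsb i_k j_k}$ evaluating to $\sigma(\text{enc}(\mathfrak{A}))$. Since a logspace transducer runs in polynomial time, $|w_{\text{enc}(\mathfrak{A})}|\leq |A|^l$ for some $l$, and we encode the indices by tuples from $A^l$ exactly as in Theorem~\ref{I-V}. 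For each $s_p\in G=\{s_1,\dotsc,s_\ell\}$ I would form the language $\mathcal{L}_p$ of tuples $\tuple{\mathfrak{A},\overline{a}_1,\overline{b}_1,\dotsc,\overline{a}_k,\overline{b}_k}$ for which $w_{\text{enc}(\mathfrak{A})}$ has the above form and the addressed coefficient $s_{\overline{a}_1\overline{b}_1\dotsb\overline{a}_k\overline{b}_k}$ equals $s_p$. Invoking $\mathrm{DTC}=\mathsf{DLOGSPACE}$~\cite{Immerman1}, each such $\mathcal{L}_p$ is defined by a $\mathrm{DTC}$-formula $\phi_p(\overline{x}_1,\overline{y}_1,\dotsc,\overline{x}_k,\overline{y}_k)$, and the required $\mathrm{wDTC}[\mathcal{S}]$-formula is
\[
\psi := \bigfplusop \overline{x}_1 \bigfmalop \overline{y}_1 \dotsb \bigfplusop \overline{x}_k \bigfmalop \overline{y}_k \bigfmal_{p=1}^\ell \bigl(\phi_p(\overline{x}_1,\overline{y}_1,\dotsc,\overline{x}_k,\overline{y}_k) \triangleright s_p\bigr),
\]
for which $\|\psi\|(\mathfrak{A})$ reproduces the nested sum-of-products and hence equals $\sigma(\text{enc}(\mathfrak{A}))$.

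The main obstacle is the logspace recognizability of $\mathcal{L}_p$. One cannot store the full term word $w_{\text{enc}(\mathfrak{A})}$, whose length may be polynomial, on a logarithmic work tape; instead I would decide $\mathcal{L}_p$ by re-running the transducer and scanning its output on the fly, maintaining only a constant number of $O(\log n)$-bit counters that track the current position in the parse of the nested $\sum/\prod$ structure (the bound on alternation depth $k$ is exactly what keeps this number of counters constant). Verifying that this streaming parse — locating the coefficient addressed by $\overline{a}_1,\overline{b}_1,\dotsc,\overline{a}_k,\overline{b}_k$ and comparing it to $s_p$ — genuinely stays in logarithmic space, using closure of logspace under composition, is the crux; the remainder of the argument is then identical in form to Theorem~\ref{I-V}.
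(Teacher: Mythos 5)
Your proposal is correct and follows essentially the same route as the paper's proof: closure of $\mathsf{FPLOG}[\mathcal{S}]$ under polynomial sums and products via logarithmic-space counters for direction (1), and for direction (2) the languages $\mathcal{L}_p$ addressing the coefficients of the bounded-alternation term word, Immerman's $\mathrm{DTC}=\mathsf{DLOGSPACE}$ characterization, and the formula $\bigfplusop \overline{x}_1 \bigfmalop \overline{y}_1 \dotsb \bigfmal_{p=1}^\ell(\phi_p \triangleright s_p)$. In fact you supply more detail than the paper on the one point it merely asserts, namely that $\mathcal{L}_p$ is decidable in logarithmic space via a streaming re-computation of the transducer's output.
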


\begin{proof} To show (1) from Definition~\ref{ch}, again we need to prove that $\mathsf{FPLOG}[\mathcal{S}]$ is closed under the relevant semiring operations. We proceed as in the first half of Theorem~\ref{I-V} and note that $\mathsf{FPLOG}[\mathcal{S}]$ is closed under polynomial sums and products since polynomial counters can be stored in logarithmic space.

To show (2), suppose that $\sigma \in \mathsf{FPLOG}[\mathcal{S}]$, that is, $\sigma \colon \Sigma^* \longrightarrow \langle G \rangle$ for a finite $G\subseteq S$ and a finite alphabet $\Sigma$, and there exists a logarithmic-space deterministic Turing machine such that given a word $\text{enc}(\mathfrak{A})$ outputs a word $w_{\text{enc}(\mathfrak{A})}$ in the algebra of terms $\mathcal{T}(G)$ such that $w_{\text{enc}(\mathfrak{A})}$ evaluates to $\sigma(\text{enc}(\mathfrak{A}))$.
Again, since the output size of a logarithmic-space Turing machine is at most polynomial,
there exists some $l \in \mathbb{N}$ with $|w_{\text{enc}(\mathfrak{A})}| \leq |A|^l$ for all structures $\mathfrak{A}$, where $A$ is the universe of $\mathfrak{A}$.

For each $s_p \in G=\{s_1, \dots, s_\ell\}$, consider the language \[\mathcal{L}_p = \{ \tuple{\mathfrak{A}, \overline{a}_1, \overline{b}_1, \dotsc, \overline{a}_k, \overline{b}_k} \mid w_{\text{enc}(\mathfrak{A})} = \bigsplus^{m_1}_{i_1=1} \bigsmal^{n_1}_{j_1=1} \dotsb \bigsplus^{m_k}_{i_k=1} \bigsmal^{n_k}_{j_k=1} s_{i_1j_1 \dotsb i_k j_k} \ \text{and} \  s_{ \overline{a}_1  \overline{b}_1 \dotsb \overline{a}_k \overline{b}_k} =   s_p\}.\]
$\mathcal{L}_p$ is recognizable in
 $\mathsf{DLOGSPACE}$, so by~\cite{Immerman1}, there is a $\mathrm{DTC}$-formula 
  $\phi_p(\overline{x}_1, \overline{y}_1, \dotsc, \overline{x}_k, \overline{y}_k)$ such that $\mathfrak{A} \models \phi_p(\overline{a}_1, \overline{b}_1, \dotsc, \overline{a}_k, \overline{b}_k)$ iff $\tuple{\mathfrak{A}, \overline{a}_1, \overline{b}_1, \dotsc, \overline{a}_k, \overline{b}_k} \in \mathcal{L}_p$.

Now we take the $\mathrm{wTC}$-formula $\psi:= \bigfplusop \overline{x}_1 \bigfmalop \overline{y}_1 \dotsb \bigfplusop \overline{x}_k \bigfmalop \overline{y}_k \bigfmal_{p=1}^\ell(\phi_p(\overline{x}_1, \overline{y}_1, \dotsc, \overline{x}_k, \overline{y}_k) \triangleright s_p)$.
%
%
We have then that $\|\psi\|(\mathfrak{A})$ is exactly $\bigsplus^{m_1}_{i_1=1} \bigsmal^{n_1}_{j_1=1} \dotsb \bigsplus^{m_k}_{i_k=1} \bigsmal^{n_k}_{j_k=1} s_{i_1j_1 \dotsb i_k j_k}= \sigma(\text{enc}(\mathfrak{A}))$.
\end{proof}

\setcounter{Thm}{23}
\begin{Rmk}
There appears to be a pattern behind the proofs of the preceding theorems. However, it is not obvious whether the theorems and their proofs can be fit into a common framework as the appropriate weighted quantifiers for the logical characterization are specific to the complexity class. For instance, $\mathsf{FP}[\mathcal{S}]$ requires polynomial sums and products and is also closed under them, $\mathsf{FPSPACE}[\mathcal{S}]$ on the other hand requires exponential sums and products.
Other complexity classes may require additional restrictions to the quantifiers and closure properties under sums or products of a certain size may require arguments specific to the class.
\end{Rmk}

\setcounter{Thm}{24}
\begin{Pro} Let $\mathcal{R}$ be a commutative semiring. There is a series $P\in \text{\emph{NP}}[\mathcal{R}]$ such that for no $\varphi \in \mathrm{wESO}$, $|| \varphi|| = P$.

\end{Pro}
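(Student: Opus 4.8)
The plan is to exploit the elementary but decisive fact that any $\mathrm{wESO}$-formula is a \emph{finite} syntactic object: it is built over a fixed finite vocabulary $\tau$ and contains only finitely many semiring constants, say $G_\varphi \subseteq S$. Consequently $\|\varphi\|$ is a series on (encodings of) $\tau$-structures, whose underlying input alphabet is finite, and every one of its values lies in the finitely generated subsemiring $\langle G_\varphi\rangle$. The strategy is to build a single series $P \in \text{NP}[\mathcal{R}]$ that violates these finiteness constraints, so that no choice of $\varphi$ can match it.

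Concretely, I would use an input alphabet that carries semiring values directly: for every $s \in S$ introduce a distinct input symbol $\underline{s}$, and define $P$ by $P(\underline{s}) = s$ on the length-one word $\underline{s}$ and $P(w) = \szero$ otherwise. This $P$ is realized by a semiring Turing machine in the sense of~\cite{Eiter1, Eiter} that, for each $s \in S$, has one transition reading $\underline{s}$ from the initial state into an accepting state with weight $s$; every input then has a unique accepting computation and the machine halts in constant, hence polynomial, time, so $P \in \text{NP}[\mathcal{R}]$. The crucial point is that realizing $P$ forces the transition set to be infinite \emph{precisely because} the input words range over the infinitely many semiring values $\underline{s}$, which is exactly the phenomenon flagged before the statement.

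Finally I would derive the contradiction. Suppose $\|\varphi\| = P$ for some $\mathrm{wESO}$-formula $\varphi$. Since $\varphi$ is over a finite vocabulary, the induced input alphabet is finite, whereas $P$ is defined over the infinite alphabet $\{\underline{s} : s \in S\}$; this already rules out equality of the two series. Equivalently, and more conceptually, $\mathrm{range}(\|\varphi\|) \subseteq \langle G_\varphi\rangle$ is finitely generated while $\mathrm{range}(P) = S$, so the two cannot agree. I expect the main obstacle to be bookkeeping against the precise framework rather than mathematical depth: one must verify, against the definitions of~\cite{Eiter} (Def.~12--14), that such a value-reading machine is a legitimate element of $\text{NP}[\mathcal{R}]$, that ``input words involving infinitely many semiring values'' is genuinely what demands the infinite transition set, and that the identification of words with finite structures used elsewhere in the paper (one unary predicate per letter) simply cannot be performed for an infinite alphabet — which is the formal reason the finitary logic $\mathrm{wESO}$ cannot reach $P$.
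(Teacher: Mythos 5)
Your construction and your obstruction are essentially the ones the paper uses: its counterexample machine has transition set $\delta = \{(\iota, s, \lambda, s, 1, s) \mid s \in S\}$, i.e.\ exactly your ``read the semiring value $\underline{s}$ and emit weight $s$'' machine, and its reason that no competitor can match the resulting series is likewise that the competitor only ever produces values inside a finitely generated subsemiring. The one difference of route is where that finiteness observation is made: the paper first invokes the weighted Fagin theorem to reduce the claim to exhibiting some $P \in \text{NP}[\mathcal{R}]$ with $P \notin \mathsf{NP}[\mathcal{R}]$, and then notes that a weighted Turing machine has finitely many transitions and hence finitely many weights; you instead argue directly that a $\mathrm{wESO}$-formula contains only finitely many constants $G_\varphi$, so $\|\varphi\|$ takes values in $\langle G_\varphi\rangle$. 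Your version is more self-contained, the paper's localizes the finiteness argument in the machine model; both are sound.

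What you should not gloss over is the hypothesis under which the range argument actually closes. If $\mathcal{R}$ is finitely generated, a single formula may carry a generating set among its constants, so $\langle G_\varphi\rangle$ can be all of $S$ and ``finitely generated range versus range $S$'' is no contradiction; if $\mathcal{R}$ is finite (e.g.\ $\mathbb{B}$), your alphabet is finite, your machine has finitely many transitions, and the resulting $P$ is perfectly expressible, so the claim as literally stated fails. The paper's own proof quietly inserts exactly the missing hypothesis --- it begins ``let $S$ be any commutative non finitely generated semiring'' --- so the proposition is really only established under that restriction. Your fallback for infinite but finitely generated semirings such as $\mathbb{N}$ (that the domains of $P$ and $\|\varphi\|$ cannot even be identified, because an infinite alphabet would force an infinite vocabulary) does make the literal statement true in those cases, but it is a degenerate reading: it shows the two series live over incomparable index sets rather than exhibiting any expressive deficit, and it is not the argument the paper intends. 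Make the non-finite-generation hypothesis explicit and your proof coincides with the paper's.
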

\begin{proof}By the weighted Fagin's theorem, it suffices to find $P\in \text{NP}[\mathcal{R}]$ such that $P\notin \mathsf{NP}[\mathcal{R}]$. 
    Let $S$ be any commutative non finitely generated semiring (e.g. the field of rational numbers) and
$\mathcal{M} = (S, \emptyset, \{\iota, \lambda\}, \{\sqcup\}, \iota, \sqcup, \delta)$ a semiring Turing machine,
    where $\delta = \{ (\iota, s, \lambda, s, 1, s) \mid s \in S\}$.
    Then the behavior of $\mathcal{M}$ cannot be modeled by any weighted Turing machine,
    as the set of weights assigned to inputs by a weighted Turing machine are always
    contained in some finitely generated subsemiring of $S$.
\end{proof}

\begin{Pro} Let $\mathcal{R}$ be a commutative semiring and allow only finitely many transitions in a semiring Turing machine. Then $\text{\emph{NP}}[\mathcal{R}] =\mathsf{NP}[\mathcal{R}]$ , i.e.\ the NP class in the sense of~\cite{Eiter} coincides with the NP class in our sense. 

\end{Pro}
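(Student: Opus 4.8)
The plan is to prove the two inclusions $\mathsf{NP}[\mathcal{R}] \subseteq \mathrm{NP}[\mathcal{R}]$ and $\mathrm{NP}[\mathcal{R}] \subseteq \mathsf{NP}[\mathcal{R}]$ separately, where throughout $\mathrm{NP}[\mathcal{R}]$ denotes the class of~\cite[Def.~14]{Eiter} under the standing assumption that semiring Turing machines carry only finitely many transitions. In each direction I would exhibit a translation between the two machine models that preserves both the behavior (as a series) and the polynomial time bound. The common engine is the observation already recorded after our definition of weighted Turing machines: by distributivity of $\smal$ over $\splus$, the semiring Turing machine function of~\cite[Def.~13]{Eiter} equals the $\splus$ over accepting computations of the $\smal$ of the transition weights, which is exactly our $\|\mathcal{M}\|$. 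Hence it suffices to match the two kinds of machines transition-for-transition and to check that neither translation leaves the polynomial-time regime.

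For $\mathrm{NP}[\mathcal{R}] \subseteq \mathsf{NP}[\mathcal{R}]$, I would take a semiring Turing machine $\mathcal{M}$ with a \emph{finite} transition set and $\mathsf{TIME}(\mathcal{M},n)=O(n^k)$. Since $\mathcal{M}$ has finitely many transitions it assigns only finitely many distinct semiring values, so all relevant weights lie in a finitely generated subsemiring of $\mathcal{R}$; this is precisely the obstruction that Proposition~\ref{countx} exploited in the general case, and it disappears under the finiteness hypothesis. The conditions that~\cite[Def.~12]{Eiter} imposes on transitions only make them a special shape of the transitions permitted in our weighted Turing machines, so $\mathcal{M}$ may be read directly as a weighted Turing machine $\mathcal{M}'$ over $\mathcal{R}$ with the same states, tape alphabet, and weighting $\nu$. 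By the behavior coincidence above $\|\mathcal{M}'\| = \|\mathcal{M}\|$ as series, the time bound is unchanged, and therefore $\|\mathcal{M}\| \in \mathsf{NP}[\mathcal{R}]$.

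For $\mathsf{NP}[\mathcal{R}] \subseteq \mathrm{NP}[\mathcal{R}]$, I would invoke the containment $\mathsf{NP}[\mathcal{S}] \subseteq \mathrm{NP}[\mathcal{R}]$ already noted for commutative $\mathcal{R}$ after the definition of $\mathsf{NP}[\mathcal{S}]$, and then verify that the semiring Turing machine it produces from a weighted Turing machine uses only finitely many transitions. This is clear: a weighted Turing machine has a finite set $\Delta$ with $\nu\colon \Delta\to S$, and the transformation into the format of~\cite[Def.~12]{Eiter} is a finite, local rewriting (splitting or relabeling each transition to meet the required conditions) that multiplies $|\Delta|$ by a constant and hence stays finite. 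Commutativity of $\mathcal{R}$ is used exactly to guarantee that the $\smal$ of the weights collected along a computation is independent of the order in which the two models accumulate them, so the behaviors agree; as the rewriting is local and raises the running time only by a constant factor, polynomial time is preserved and the resulting series lies in $\mathrm{NP}[\mathcal{R}]$.

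The main obstacle is this second inclusion, which is exactly the direction flagged as ``not clear'' in the discussion of weighted Turing machines: one must convert an arbitrary finite weighted Turing machine into the more rigid transition format of~\cite[Def.~12]{Eiter} \emph{without} introducing new weights or spurious nondeterministic branches that would alter the summed-product behavior, while certifying that the number of transitions remains finite. Once the bookkeeping of~\cite[Def.~12, Def.~13]{Eiter} is unwound and commutativity is used to fix the order of multiplication, this reduces to a routine local simulation; the genuine content of the proposition is simply that restricting to finitely many transitions removes the single pathology, namely unboundedly many weights, that Proposition~\ref{countx} identified as separating the two classes in general.
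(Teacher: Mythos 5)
Your overall strategy --- two machine-to-machine translations preserving behavior and the polynomial time bound, with distributivity as the engine --- is the same as the paper's, but there is a genuine gap in your treatment of the inclusion $\text{NP}[\mathcal{R}] \subseteq \mathsf{NP}[\mathcal{R}]$ (semiring Turing machine to weighted Turing machine). You claim the semiring Turing machine ``may be read directly as a weighted Turing machine $\mathcal{M}'$ with the same states, tape alphabet, and weighting $\nu$.'' This is not well defined: the transition set of a semiring Turing machine may contain several tuples $(p,a,q,b,d,s_1), (p,a,q,b,d,s_2), \ldots$ that agree on the underlying move but carry different weights, whereas in a weighted Turing machine $\nu$ is a \emph{function} on $\Delta$, so each transition $(p,a,q,b,d)$ receives exactly one weight. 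The paper's proof handles exactly this point: it sets $\nu(p,a,q,b,d) = \sum_{(p,a,q,b,d,s)\in\delta'} s$ (a finite sum, by the finiteness hypothesis on the transition set) and then uses distributivity of $\smal$ over $\splus$ to show that collapsing the parallel weighted branches into a single transition with the summed weight leaves the summed-product behavior unchanged. This aggregation-plus-distributivity step is the one nontrivial idea in that direction, and your proposal omits it.

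Relatedly, you locate the ``main obstacle'' in the direction $\mathsf{NP}[\mathcal{R}] \subseteq \text{NP}[\mathcal{R}]$, but the paper treats that direction as the routine one: take $\mathcal{R}'$ to be the (finite) set of values in the image of $\nu$, turn each $(p,a,q,b,d)\in\Delta$ into the single tuple $(p,a,q,b,d,\nu(p,a,q,b,d))$, simulate stay-moves by a right move followed by a left move, and observe that the restrictions of Def.~12 of~\cite{Eiter} hold trivially because the machine never reads or writes semiring values. Your sketch of this direction is correct in substance; the missing content of your proposal lies entirely in the other inclusion.
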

\begin{proof} The inclusion $\mathsf{NP}[\mathcal{R}] \subseteq \text{NP}[\mathcal{R}] $ is not too difficult to see. For every weighted Turing machine $\mathcal{M}= \tuple{Q, \Gamma,\Delta, \nu, q_0, F, \Box}$ over a commutative semiring $\mathcal{R}$,
there exists an SRTM
$\mathcal{M}'= \tuple{\mathcal{R}, \mathcal{R}', Q, \Gamma, q_0, \Box, \delta'}$ with the same behavior as $\mathcal{M}$.
For this, choose $\mathcal{R}'$ as the set of all values assigned by $\nu$
and $\delta' = \{(p,a,q,b,d,s) \mid (p,a,q,b,d) \in \Delta, \nu(p,a,q,b,d) = s, s\in \mathcal{R}\}$.
Note that formally, SRTMs always have to move left or right, but introducing transitions which simulate this behavior using a right and a left move are a simple exercise.
The restrictions imposed on SRTMs are clearly satisfied,
as $\mathcal{R}'$ can neither read nor write semiring values,
all transition weights are possible as $\mathcal{R}'$ contains all of the finitely many transitions weights,
and $\mathcal{M}'$ cannot distinguish between semiring values as it it cannot even read them.

We continue with the inclusion  $\text{NP}[\mathcal{R}] \subseteq \mathsf{NP}[\mathcal{R}] $. Suppose that $P \in \text{NP}[\mathcal{R}]$, i.e., $P: \Sigma^* \longrightarrow R$ is a  series such that there is SRTM  $\mathcal{M}=\tuple{\mathcal{R}, \mathcal{R}', Q, \Gamma, q_0, \Box, \delta'}$ that computes $P$ in polynomial time. This mean that  for any $x \in \Sigma^*$, the value of  $\mathcal{M}$ on the configuration $(\iota, x, 0)$ (where $\iota$ is the initial state and $0$ the position of the head) is $P(x)$. We define a weighted Turing machine $\mathcal{M}'= \tuple{Q, \Gamma,\Delta, \nu, q_0, F, \Box}$ by setting $\Delta = \{(p,a,q,b,d) \mid  \ \text{there is} \ s\in \mathcal{R} \ \text{s.t.}  \ (p,a,q,b,d,s) \in \delta'\}$ and $\nu(p,a,q,b,d) = \sum_{(p,a,q,b,d,s) \in \delta'} s$ (this is finite since there are only finitely many transitions in $\delta'$). Observe that in the definition of a SMRTM the same transition can be done with different weights, which is why in our weighted version we need to define this sum. Using distributivity of the semiring, then the function computed by $\mathcal{M}$ (i.e. a series) coincides with the behaviour of $\mathcal{M}'$.\end{proof}

\end{document}